\newif\ifreport
\newtheorem{assumption}{Assumption}[section]
\newtheorem{lemma}{Lemma}[section]
\newtheorem{theorem}{Theorem}[section]
\begin{document}

\title{A Subspace Acceleration Method for Minimization Involving a Group Sparsity-Inducing Regularizer}

\author{Frank E.~Curtis\thanks{E-mail: frank.e.curtis@gmail.com}}
%\affil{Department of Industrial and Systems Engineering, Lehigh University}
\author{Yutong Dai\thanks{E-mail: yud319@lehigh.edu}}
%\affil{Department of Industrial and Systems Engineering, Lehigh University}
\author{Daniel P.~Robinson\thanks{E-mail: daniel.p.robinson@gmail.com}}
\affil{Department of Industrial and Systems Engineering, Lehigh University}
%\author{Baoyu Zhou\thanks{E-mail: baz216@lehigh.edu}}
%\affil{Department of Industrial and Systems Engineering, Lehigh University}
\titlepage

\maketitle

%**********
% Abstract
%**********
\begin{abstract}
  We consider the problem of minimizing an objective function that is the sum of a convex function and a group sparsity-inducing regularizer. Problems that integrate such regularizers arise in modern machine learning applications, often for the purpose of obtaining models that are easier to interpret and that have higher predictive accuracy. We present a new method for solving such problems that utilize subspace acceleration, domain decomposition, and support identification. Our analysis shows, under common assumptions, that the iterate sequence  generated by our framework is globally convergent, converges to an $\epsilon$-approximate solution in at most $O(\epsilon^{-(1+p)})$ (respectively, $O(\epsilon^{-(2+p)})$) iterations for all $\epsilon$ bounded above and large enough (respectively, all $\epsilon$ bounded above) where $p > 0$ is an algorithm parameter,  
and exhibits superlinear local convergence. Preliminary numerical results for the task of binary classification based on regularized logistic regression show that our approach is efficient and robust, with the ability to outperform a state-of-the-art method.

\end{abstract}

%**************
% Body of paper
%**************
%******************************************
% Paper-dependent macros here
%******************************************
\newcommand{\sub}[1]{^{\null}_{#1}}
\newcommand{\spgk}{s_k}
\newcommand{\GroupFaRSA}{{\tt FaRSA-Group}}
\newcommand{\LIBLINEAR}{LIBLINEAR}
\newcommand{\newGLMNET}{newGLMNET}
\newcommand{\Ik}{{\Ical_k}}
\newcommand{\Gcali}{{\Gcal_i}}
\newcommand{\Icgk}{\Ical^{\mathrm{cg}}_k}
\newcommand{\Ipgk}{\Ical^{\mathrm{pg}}_k}
\newcommand{\chicgk}{\chi^{\mathrm{cg}}_k}
\newcommand{\chipgk}{\chi^{\mathrm{pg}}_k}
\newcommand{\Icgkbar}{\bar\Ical^{\mathrm{cg}}_k}
\newcommand{\Ipgkbar}{\bar\Ical^{\mathrm{pg}}_k}
\newcommand{\chicgkbar}{\bar\chi^{\mathrm{cg}}_k}
\newcommand{\chipgkbar}{\bar\chi^{\mathrm{pg}}_k}
\newcommand{\Ismallk}{\Ical^{\mathrm{small}}_k}
\newcommand{\setepsilon}{\Kcal_\epsilon}
\newcommand{\setcg}{\Kcal^{\mathrm{cg}}}
\newcommand{\setcgZERO}{\Kcal_0^{\mathrm{cg}}}
\newcommand{\setcgSD}{\Kcal_{\mathrm{sd}}^{\mathrm{cg}}}
\newcommand{\setcgSDbig}{\Kcal_{\mathrm{sd,big}}^{\mathrm{cg}}}
\newcommand{\setcgSDsmall}{\Kcal_{\mathrm{sd,small}}^{\mathrm{cg}}}
\newcommand{\setpg}{\Kcal^{\mathrm{pg}}}
\newcommand{\setpgSAME}{\Kcal_{\!\rightarrow}^{\mathrm{pg}}}
\newcommand{\setpgDEC}{\Kcal_{\!\downarrow}^{\mathrm{pg}}}
\newcommand{\setcgpg}{\Kcal^{\mathrm{cg}\rightarrow \mathrm{pg}}}
\newcommand{\flagpgk}{{\rm flag}^{\mathrm{pg}}_k}
\newcommand{\flagcgk}{{\rm flag}^{\mathrm{cg}}_k}
\newcommand{\setalphasame}{\Kcal^\alpha_{\!\rightarrow}} 
\newcommand{\setalphadecr}{\Kcal^\alpha_{\!\downarrow}} 
\newcommand{\lb}{F_{\min}}
\newcommand{\indicator}{\mathbbm{1}}
\newcommand{\rhobar}{\bar\rho}
\newcommand{\zb}{{\tt new\_zero}}
\newcommand{\sd}{{\tt suff\_descent}}
\newcommand{\samealpha}{{\tt same\_\alpha}}
\newcommand{\decalpha}{{\tt decrease\_\alpha}}
\newcommand{\lammin}{\lambda_{\mathrm{min}}}
\newcommand{\lammax}{\lambda_{\mathrm{max}}}
\newcommand{\mumin}{\mu_{\mathrm{min}}}
\newcommand{\mumax}{\mu_{\mathrm{max}}}
\newcommand{\rhokmin}{\rho_{k,\mathrm{min}}}
\newcommand{\rhoki}{\rho_{k,i}}
\newcommand{\rhobarki}{\rhobar_{k,i}}
\newcommand{\npgk}{n_{\mathrm{pg}}(\kbar)}
\newcommand{\nbigk}{n_{\mathrm{big}}(\kbar)}
\newcommand{\nsmallk}{n_{\mathrm{small}}(\kbar)}
\newcommand{\gglasso}{{\tt gglasso}}

%*********
% Section
%*********
\section{Introduction}\label{sec.introduction}

We consider the minimization of a function that may be written as the sum of a convex function and a nonoverlapping group sparsity-inducing regularizer. Specifically, given a convex and twice continuously differentiable function $f:\R{n} \to \R{}$, a collection of $n_\Gcal > 0$ nonoverlapping groups $\Gcal:= \{\Gcal_{i}\}_{i=1}^{n_\Gcal}$ that forms a partition of $\{1,2,.\cdots,n\}$ (i.e., $\Gcal_i\cap\Gcal_j = \emptyset$ for all $i\neq j$ and $\cup_{i=1}^{n_\Gcal} \Gcal_i = \{1,2,.\cdots,n\}$), and group-wise weighting parameters $\{\lambda_{i}\}_{i=1}^{n_\Gcal} >0$, our algorithm solves the problem
\begin{equation}\label{prob}
\min_{x\in\R{n}} \{ f(x) + r(x) \}, \ \text{where} \  r(x):= \sum_{i=1}^{n_\Gcal} \lambda_i\left\|[x]_{\Gcal_{i}}\right\|_{2} 
\end{equation}
and $[x]_\Gcali$ is the subvector of $x$ corresponding to elements in $\Gcali$.  The  regularizer $r$ generalizes the $\ell_1$-norm, which is recovered by choosing $\Gcal_i = \{i\}$ for all  $i\in\{1,2,\dots,n\}$.

Despite the successes of $\ell_1$-norm regularization, its inadequacy in the context of many modern machine learning applications has been noticed by researchers, and is one motivation for the use of group regularization. In some machine learning applications the covariates come in groups (e.g., genes that regulate hormone levels in microarray data~\cite{ma2007supervised}), in which case one may wish to select them jointly. Also, integrating group information into the
modeling process can improve both the interpretability and accuracy~\cite{zeng2016overlapping} of the resulting model. Yuan and Lin~\cite{yuan2006model} observed that in the multi-factor analysis-of-variance problem, where each factor is expressed through a set of dummy variables, deleting an irrelevant factor is equivalent to deleting a \emph{group} of dummy variables; the $\ell_1$-norm regularizer fails to achieve this goal. 

\subsection{State-of-the art methods}

There is a long history of algorithms for solving regularized problems of the form~\eqref{prob} (see~\cite{bach2012optimization} and the references therein).  Here, we review some of the state-of-the-art approaches for solving sparsity-promoting problems that are most closely related to our proposed approach.

\textbf{First-order methods.}  Proximal methods are designed to solve problems of the form~\eqref{prob} and have received attention in the machine learning community~\cite{beck2009fast,combettes2011proximal,wright2009sparse}. 
A well-known example for $\ell_1$-norm regularized problems is the iterative shrinkage-thresholding algorithm (ISTA), which is obtained by applying a proximal gradient (PG) iteration to minimize a smooth function plus the $\ell_1$-norm regularizer \cite{DaubDefrDeMo04,Dono95}. Under certain assumptions, one can prove a worst-case complexity bound on the number of iterations required by the PG method before it correctly identifies the support of the optimal solution~\cite{nutini2019active}. Combined with the acceleration technique proposed by Nesterov \cite{nesterov1983method, nesterov2013gradient}, one obtains the algorithm FISTA~\cite{beck2009fast}.  One obtains a related, but distinct approach from ISTA by posing an equivalent smooth reformulation of the problem---separating the positive and negative parts of the variables---and applying a gradient projection method to the resulting formulation~\cite{FanChanHsieWangLin08,FiguNowaWrig07}. All of these approaches have been shown to work well in practice, at least 
compared to other first-order methods such as the subgradient algorithm.  However, these algorithms are often inferior in practice compared to alternative approaches that employ space decomposition techniques and/or second-order derivatives~\cite{chen2017reduced,CheCR18,OBA}.  

As an alternative to PG and gradient projection techniques, researchers have considered (block) coordinate descent for solving $\ell_1$-norm regularized problems.  Such a strategy is appealing, since when minimizing an $\ell_1$-norm regularized objective along coordinate directions, it is common that the objective is minimized with variables being zero.  These approaches are also easy to implement to exploit parallel computing; see, e.g., the accelerated randomized proximal coordinate gradient method in~\cite{xiao}, the parallel coordinate descent methods in~\cite{Richtarik2016}, and the asynchronous coordinate descent technique in \cite{liu2015asynchronous}.  A downside of these approaches is that the space decomposition is performed in a prescribed manner, rather than in an adaptive way that can benefit from information acquired during the solution process. Also, these approaches do not effectively exploit second-order derivative information and require exact minimization along coordinate directions.  An exception to this latter criticism is the inexact coordinate descent algorithm from \cite{tappenden2016inexact}, although this approach does not effectively exploit second-order derivatives and uses a prescribed space decomposition strategy.

Various other approaches have been proposed for solving problems involving specific regularizers.  In \cite{Liu2009SLEPmanual}, the authors discuss various methods for sparse learning that make use of projection techniques.  A well-known package is {\tt GLMNET} \cite{friedman2010regularization}, which is designed for solving problems with the elastic-net regularization.  Finally, let us mention the work in \cite{Yang2015}, which proposes and tests a groupwise-majorization-descent algorithm (called \gglasso{}) for solving problems involving the group-$\ell_1$-norm regularizer.  A potential downside of this approach is that it updates variables by groups in a cycle, rather than by using an adaptive space decomposition technique. 

\textbf{Second-order methods.}  Relatively few second-order methods have been proposed for minimizing sparsity-promoting objective functions.  In s\cite{grapiglia2019accelerated}, an accelerated regularized Newton scheme is proposed.  A similar proximal-Newton method is proposed in \cite{lee2014proximal}, which under some assumptions can be shown to converge locally superlinearly.  These approaches can be effective in practice, although they appear to lack good worst-case guarantees in terms of identification of the optimal solution support.  Other approaches, such as the orthant-based method in \cite{OBA}, can predict the solution support, but in practice are often outperformed by a closely related method called {\tt FaRSA}~\cite{chen2017reduced,CheCR18}. As for publicly available solvers based on second-order methods, most have been designed for specific loss functions and regularizers. For example, {\tt newGLMNET} in \cite{yuan2012improved} is designed for $\ell_1$-regularized logistic regression and the method in \cite{fan2008liblinear} is  designed for regularized logistic regression and support vector machines.

%************
% Subsection
%************
\subsection{Notation and assumptions}

Let $\R{}$ denote the set of real numbers, $\R{n}$ denote the set of $n$-dimensional real vectors, and $\R{m \times n}$ denote the set of $m$-by-$n$-dimensional real matrices.  The set of natural numbers is denoted as $\N{} := \{0,1,2,\dots\}$.  
For any set $\Ical \subseteq\{1,2,\dots, n\}$, we define the projection of $x\in\R{n}$ onto the subspace spanned by the coordinate vectors indexed by the entries of $\Ical$ as $P_\Ical(x)$, so that
\begin{equation}\label{def:P}
[P_\Ical(x)]_i :=
\begin{cases}
x_i  & \text{if $i\in\Ical$,} \\
0    & \text{if $i\notin\Ical$.}
\end{cases}
\end{equation}
For a function $h: \R{n}\to \R{}$, 
vector $x\in\R{n}$, and direction $d\in\R{n}$, the directional derivative of $h$ at $x$ in the direction $d$ is defined as the following limit:
$$
D_h(x;d): = \lim_{t\searrow 0} \frac{h(x+td) - h(x)}{t}.
$$

The following assumption is assumed to hold throughout the paper.

\bassumption\label{ass.first}
The function $f:\R{n}\to \R{}$ used in the definition of the objective function of problem~\eqref{prob} is convex and continuously differentiable. It follows that there exists a constant $L_f$ such that $\|\nabla f(x)\|_2 \leq L_f$ for all $x\in\Lcal := \{x \in\R{n} : f(x) + r(x) \leq f(x_{0}) + r(x_0)\}$ for any initial estimate $x_0$ of a solution to problem~\eqref{prob}. The objective function $f+r$ is bounded below and the gradient function $\nabla f$ is Lipschitz continuous on $\Lcal$ with Lipschitz constant $L_g$. 
\eassumption

%************
% Subsection
%************
\subsection{Organization}

In Section~\ref{sec.preliminaries}, we present preliminary results related to PG calculations.  In Section~\ref{sec.algorithm}, by using PG-calculations as a starting point, we propose a reduced-space second-order domain decomposition algorithm for solving problem~\eqref{prob}.  The algorithm is analyzed in Section~\ref{sec.analysis} and numerical results are presented in Section~\ref{sec.numerical}.  Finally, in Section~\ref{sec.conclusion}, we provide concluding remarks. 

%************
% Subsection
%************
\section{Preliminaries}\label{sec.preliminaries}

In this section, we discuss preliminary material related to the objective function $f+r$ and its associated PG calculations. (All proofs may be found in Appendix~\ref{app:PG}.) For any $\xbar \in\R{n}$ and $\alphabar>0$, we define the PG \emph{update} as
\bequation\label{def:pg-update}
T(\xbar,\alphabar)
:=\argmin{x\in\R{n}}\left\{\tfrac{1}{2\alphabar}\|x- \big(\xbar-\alphabar \nabla f(\xbar)\big)\|_2^2 + r(x)\right\}
\eequation
and the associated PG \emph{step} as 
\begin{equation}\label{def:pg-step}
s(\xbar,\alphabar)
:= T(\xbar,\alphabar) - \xbar.
\end{equation}
The next result shows that the directional derivative of $f+r$ along the PG step is negative with magnitude proportional to the squared norm of the PG direction. 

\begin{lemma}\label{lem:descent-full}
For any $\xbar \in\R{n}$ and $\alphabar > 0$, the PG step $s(\xbar,\alphabar)$ in~\eqref{def:pg-step} satisfies
$$
D_{f+r}(\xbar;s(\xbar,\alphabar)) 
\leq -\tfrac{1}{\alphabar}\|s(\xbar,\alphabar)\|_2^2.
$$
\end{lemma}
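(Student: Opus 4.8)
The plan is to exploit the optimality of the proximal subproblem defining $T := T(\bar{x},\bar{\alpha})$ together with the convexity of $r$, writing $s := s(\bar{x},\bar{\alpha}) = T - \bar{x}$ throughout. First I would record the first-order optimality condition for the (strongly convex) minimization problem in~\eqref{def:pg-update}. Since the objective of that subproblem is the sum of the smooth strongly convex quadratic $x \mapsto \tfrac{1}{2\bar{\alpha}}\|x-(\bar{x}-\bar{\alpha}\nabla f(\bar{x}))\|_2^2$ and the convex function $r$, the point $T$ is its unique minimizer if and only if $0$ lies in the subdifferential there. This yields the existence of a subgradient $g \in \partial r(T)$ satisfying
\[
\tfrac{1}{\bar{\alpha}}\big(T - \bar{x} + \bar{\alpha}\nabla f(\bar{x})\big) + g = 0,
\qquad\text{equivalently}\qquad
g = -\tfrac{1}{\bar{\alpha}}s - \nabla f(\bar{x}).
\]
This identity is what links the step $s$ to a subgradient of the regularizer, and it will drive the entire estimate.

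Next I would split the directional derivative using additivity of limits and the differentiability of $f$, namely $D_{f+r}(\bar{x};s) = \nabla f(\bar{x})^{\top} s + D_r(\bar{x};s)$. The quantity $D_r(\bar{x};s)$ exists because $r$ is convex and finite-valued. The crux is that the optimality condition only supplies a subgradient of $r$ at $T$, not at $\bar{x}$, so I need a way to transfer information from $T$ back to $\bar{x}$. Convexity of $r$ does this in two steps: the monotonicity of difference quotients of a convex function gives $D_r(\bar{x};s) \le r(\bar{x}+s) - r(\bar{x}) = r(T) - r(\bar{x})$, and the subgradient inequality at $T$ gives $r(\bar{x}) \ge r(T) + g^{\top}(\bar{x}-T)$, i.e. $r(T)-r(\bar{x}) \le g^{\top}s$. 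Chaining these yields $D_r(\bar{x};s) \le g^{\top}s$.

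Finally I would combine the pieces: substituting the bound on $D_r(\bar{x};s)$ and then the expression for $g$ gives
\[
D_{f+r}(\bar{x};s) \;\le\; \nabla f(\bar{x})^{\top}s + g^{\top}s \;=\; \nabla f(\bar{x})^{\top}s + \Big(-\tfrac{1}{\bar{\alpha}}s - \nabla f(\bar{x})\Big)^{\top}s \;=\; -\tfrac{1}{\bar{\alpha}}\|s\|_2^2,
\]
which is exactly the claimed inequality. I expect the only genuinely delicate point to be the transfer of the subgradient from $T$ to $\bar{x}$ described in the second paragraph; everything else is a differentiation identity for $f$ and an algebraic substitution. Care is needed to justify the directional-derivative bound $D_r(\bar{x};s)\le r(T)-r(\bar{x})$ rigorously (it follows from the convexity of $t\mapsto r(\bar{x}+ts)$ and the definition of $D_r$ as the limit of the difference quotient as $t\searrow 0$), and to note that strong convexity of the quadratic guarantees $T$ is well defined so that the subgradient $g$ indeed exists.
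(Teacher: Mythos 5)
Your proof is correct, and it takes a route that differs from the paper's in a meaningful way. Both arguments start from the same place---the first-order optimality condition for the prox subproblem, producing $g\in\partial r(T)$ with $g=-\tfrac{1}{\alphabar}s-\nabla f(\xbar)$---but they diverge in how they connect that subgradient to the directional derivative at $\xbar$. The paper takes an arbitrary $g_{f+r}\in\partial(f+r)(\xbar)$, invokes a subdifferential sum rule to write it as $\nabla f(\xbar)+g_r$ with $g_r\in\partial r(\xbar)$, adds the two subgradient inequalities at $\xbar$ and at $T$ to obtain the monotonicity relation $(g_r-g_+)^T(x_+-\xbar)\le 0$, and finally appeals to the max formula $D_{f+r}(\xbar;s)=\sup_{g\in\partial(f+r)(\xbar)}s^Tg$ to convert the uniform bound on $s^Tg_{f+r}$ into a bound on the directional derivative. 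You instead split the directional derivative as $\nabla f(\xbar)^Ts+D_r(\xbar;s)$, bound $D_r(\xbar;s)\le r(T)-r(\xbar)$ by monotonicity of the convex difference quotient, and then use a \emph{single} subgradient inequality at $T$ to get $r(T)-r(\xbar)\le g^Ts$. The net effect is the same chain of inequalities, but your version never needs a subgradient of $r$ at $\xbar$, never needs the sum rule, and never needs the sup-representation of the directional derivative---the two external facts the paper cites (its references to Bertsekas and Mordukhovich) are replaced by elementary convexity. The paper's formulation has the mild advantage that the intermediate inequality $s^Tg\le-\tfrac{1}{\alphabar}\|s\|_2^2$ holds uniformly over all subgradients of $f+r$ at $\xbar$, which is a slightly stronger statement than the lemma itself; your argument is leaner and more self-contained but yields only the stated bound. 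Both are complete and rigorous.
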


The PG update defined in~\eqref{def:pg-update} can be computed group-wise for each $\Gcal_i\in\Gcal$ by
\begin{equation}\label{def:T}
\begin{aligned}[]
[T(\xbar,\alphabar)]_{\Gcal_i} 
 &= \Big[ \argmin{x\in\R{n}}\Big\{\tfrac{1}{2\alphabar} \| x- \big(\xbar-\alphabar\nabla f(\xbar)\big)\|_2^2 + \sum_{i=1}^{n_\Gcal}\lambda_i\|x_{\Gcal_i}\|_2\Big\}\Big]_{\Gcal_i} \\ 
 &= 
 \max\left\{1 - \frac{\alphabar \lambda_i}{\|[\xbar]_{\Gcal_i}-\alphabar\nabla_{\Gcal_i}f(\xbar)\|_2},0\right\}
  \Big([\xbar]_{\Gcal_i}-\alphabar\nabla_{\Gcal_i}f(\xbar)\Big).
\end{aligned}
\end{equation}
Combining this observation with Lemma~\ref{lem:descent-full} leads to the following corollary, which will be relevant to the manner in which we design the algorithm we propose in Section~\ref{sec.algorithm}.

\begin{lemma}\label{lem:dd} 
For any $\xbar \in\R{n}$, $\alphabar > 0$, and set $\Ical$ equal to the union of a subset of $\{\Gcal_i\}_{i=1}^{n_\Gcal}$, the PG step $s(\xbar,\alphabar)$ defined in~\eqref{def:pg-step} satisfies
\begin{equation}
D_{f+r}\big(\xbar;P_\Ical(s(\xbar,\alphabar))\big) 
\leq -\tfrac{1}{\alphabar}\|P_\Ical(s(\xbar,\alphabar))\|_2^2
\end{equation}
where the projection operator $P_\Ical$ is defined through~\eqref{def:P}.
\end{lemma}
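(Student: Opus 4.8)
The plan is to reduce the claim to a per-group inequality by exploiting the separability of both the PG computation and the directional derivative across the partition $\Gcal$, and then to establish that per-group inequality from the optimality conditions of the group-wise prox subproblem in~\eqref{def:T} together with monotonicity of the subdifferential of the Euclidean norm. Throughout write $s := s(\xbar,\alphabar)$, $t_i := [T(\xbar,\alphabar)]_{\Gcal_i}$, $d_i := [s]_{\Gcal_i} = t_i - [\xbar]_{\Gcal_i}$, and $u_i := [\xbar]_{\Gcal_i} - \alphabar\nabla_{\Gcal_i}f(\xbar)$; let $r_i(x) := \lambda_i\|[x]_{\Gcal_i}\|_2$, so that $r = \sum_{i=1}^{n_\Gcal} r_i$.

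First I would split the directional derivative. Since $f$ is differentiable and $r$ is a sum of convex functions acting on disjoint coordinate blocks, $D_{f+r}(\xbar;e) = \nabla f(\xbar)^\top e + \sum_{i=1}^{n_\Gcal} D_{r_i}(\xbar;e)$ for any direction $e$, with $D_{r_i}(\xbar;e)$ depending only on $[e]_{\Gcal_i}$. Because $\Ical$ is a union of whole groups, $[P_\Ical(s)]_{\Gcal_i}$ equals $d_i$ when $\Gcal_i\subseteq\Ical$ and equals $0$ otherwise, the latter contributing nothing (as $D_{r_i}(\xbar;\cdot)$ vanishes on a zero block). Writing $\delta_i := \nabla_{\Gcal_i}f(\xbar)^\top d_i + D_{r_i}(\xbar;s)$, this gives
\[
D_{f+r}(\xbar;P_\Ical(s)) = \sum_{\Gcal_i\subseteq\Ical}\delta_i \quad\text{and}\quad \|P_\Ical(s)\|_2^2 = \sum_{\Gcal_i\subseteq\Ical}\|d_i\|_2^2,
\]
so it suffices to prove the per-group bound $\delta_i \le -\tfrac1\alphabar\|d_i\|_2^2$ for each $i$.

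For that bound I would use that, by~\eqref{def:T}, $t_i$ minimizes $y\mapsto\tfrac{1}{2\alphabar}\|y-u_i\|_2^2+\lambda_i\|y\|_2$, whose optimality condition reads $w_i := \tfrac1\alphabar(u_i-t_i)\in\lambda_i\,\partial\|t_i\|_2$. Using $[\xbar]_{\Gcal_i}-u_i = -(d_i+\alphabar w_i)$, which follows from $d_i = t_i-[\xbar]_{\Gcal_i}$ and $u_i = t_i+\alphabar w_i$, a direct expansion gives $\nabla_{\Gcal_i}f(\xbar)^\top d_i = \tfrac1\alphabar([\xbar]_{\Gcal_i}-u_i)^\top d_i = -\tfrac1\alphabar\|d_i\|_2^2 - w_i^\top d_i$. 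Since $D_{r_i}(\xbar;s) = \lambda_i D_{\|\cdot\|_2}([\xbar]_{\Gcal_i};d_i)$, the target $\delta_i\le-\tfrac1\alphabar\|d_i\|_2^2$ is therefore equivalent to $\lambda_i D_{\|\cdot\|_2}([\xbar]_{\Gcal_i};d_i)\le w_i^\top d_i$. I would finish by recalling that, for the convex function $\|\cdot\|_2$, the directional derivative is the support function of the subdifferential, $D_{\|\cdot\|_2}([\xbar]_{\Gcal_i};d_i) = \sup_{h\in\partial\|[\xbar]_{\Gcal_i}\|_2}h^\top d_i$, and that $w_i/\lambda_i\in\partial\|t_i\|_2$; since $d_i = t_i-[\xbar]_{\Gcal_i}$, monotonicity of the subdifferential, $(w_i/\lambda_i - h)^\top(t_i-[\xbar]_{\Gcal_i})\ge0$, yields $\lambda_i h^\top d_i \le w_i^\top d_i$ for every such $h$, hence the desired inequality after taking the supremum over $h$.

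The step I expect to be the main obstacle is securing the sharp constant $\tfrac1\alphabar$. The tempting shortcut of bounding $D_{r_i}(\xbar;s)\le r_i(t_i)-r_i([\xbar]_{\Gcal_i})$ by convexity and invoking optimality of $t_i$ only delivers $-\tfrac{1}{2\alphabar}\|d_i\|_2^2$, losing a factor of two. Routing the argument through the support-function form of the directional derivative and monotonicity of $\partial\|\cdot\|_2$ avoids this loss and, as a bonus, requires no case split on whether $t_i$ or $[\xbar]_{\Gcal_i}$ vanishes. Summing the per-group bound over $\Gcal_i\subseteq\Ical$ then gives the stated inequality. I note that the per-group inequality is precisely Lemma~\ref{lem:descent-full} applied to the one-group problem defined by the linearization $y\mapsto\nabla_{\Gcal_i}f(\xbar)^\top y + \lambda_i\|y\|_2$, but the direct route above is preferable since it avoids having to check that this linearized problem inherits all of Assumption~\ref{ass.first}.
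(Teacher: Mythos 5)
Your proof is correct and takes essentially the same route as the paper: the paper proves this lemma by repeating the argument of Lemma~\ref{lem:descent-full} with all calculations restricted to the groups composing $\Ical$, which is precisely your per-group argument --- prox optimality condition, monotonicity of the subdifferential, and the support-function representation of the directional derivative, yielding the sharp constant $\tfrac{1}{\alphabar}$. The only cosmetic difference is that you apply the support-function formula to each $\lambda_i\|\cdot\|_2$ separately rather than to $f+r$ as a whole (as the paper does via the sum rule for $\partial(f+r)(\xbar)$).
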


Our next result quantifies the decrease in $f+r$ that one can expect to obtain by taking a PG step $s(\xbar,\alphabar)$, provided the PG parameter $\alphabar$ is sufficiently small.
\begin{lemma}\label{lem:decrease-known}
For any $\xbar \in\R{n}$, $\alphabar\in(0,2/L)$, and $\Ical$ equal to the union of a subset of $\{\Gcal_i\}_{i=1}^{n_\Gcal}$, the objective function decrease satisfies
$$
f(\xbar + P_{\Ical}(\xbar,\sbar)) + r(\xbar + P_{\Ical}(\xbar,\sbar)) 
\leq f(\xbar) + r(\xbar) 
- (\tfrac{1}{\alphabar}-\tfrac{L}{2})\|P_{\Ical}\big(s(\xbar,\alphabar)\big)\|_2^2. 
$$
\end{lemma}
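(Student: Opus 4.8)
The plan is to combine the classical descent inequality for the smooth part $f$ with the first-order optimality conditions satisfied by the proximal-gradient update, exploiting the fact that $r$ (and hence the subproblem defining $T(\xbar,\alphabar)$) separates across the groups $\{\Gcal_i\}_{i=1}^{n_\Gcal}$. Throughout, abbreviate $T := T(\xbar,\alphabar)$, $s := s(\xbar,\alphabar)$, and $d := P_\Ical(s)$. Since $\Ical$ is the union of a subset of the groups, the vector $\xbar + d$ coincides with $T$ on every group contained in $\Ical$ and with $\xbar$ on every group disjoint from $\Ical$; this exact group-wise decoupling is what makes the restricted step amenable to analysis.

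First I would bound the smooth part. By Assumption~\ref{ass.first}, $\nabla f$ is Lipschitz continuous with constant $L$, so the standard descent lemma gives
\begin{equation*}
f(\xbar + d) \leq f(\xbar) + \nabla f(\xbar)^\top d + \tfrac{L}{2}\|d\|_2^2.
\end{equation*}
Next I would bound $r(\xbar + d)$ using optimality of the per-group update. Because the minimization in~\eqref{def:T} decouples over groups, for each $\Gcal_i \subseteq \Ical$ the block $[T]_{\Gcal_i}$ minimizes $\tfrac{1}{2\alphabar}\|z - ([\xbar]_{\Gcal_i} - \alphabar\nabla_{\Gcal_i} f(\xbar))\|_2^2 + \lambda_i\|z\|_2$, whose first-order optimality condition yields a subgradient $g_i \in \lambda_i\,\partial\|\cdot\|_2([T]_{\Gcal_i})$ with $g_i = -\tfrac{1}{\alphabar}[s]_{\Gcal_i} - \nabla_{\Gcal_i} f(\xbar)$. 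Applying the subgradient inequality for the convex function $\lambda_i\|\cdot\|_2$ at the point $[T]_{\Gcal_i}$ with comparison point $[\xbar]_{\Gcal_i}$ gives $\lambda_i\|[T]_{\Gcal_i}\|_2 - \lambda_i\|[\xbar]_{\Gcal_i}\|_2 \leq g_i^\top [s]_{\Gcal_i}$. Summing over all $\Gcal_i \subseteq \Ical$, using the separability of $r$ together with the fact that $\xbar + d$ is unchanged off $\Ical$, and substituting the expression for $g_i$, I obtain
\begin{equation*}
r(\xbar + d) - r(\xbar) \leq -\tfrac{1}{\alphabar}\|d\|_2^2 - \nabla f(\xbar)^\top d,
\end{equation*}
where $\|d\|_2^2 = \sum_{\Gcal_i \subseteq \Ical}\|[s]_{\Gcal_i}\|_2^2$ and $\nabla f(\xbar)^\top d = \sum_{\Gcal_i \subseteq \Ical}\nabla_{\Gcal_i} f(\xbar)^\top [s]_{\Gcal_i}$.

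Adding the two displayed bounds, the linear terms $\nabla f(\xbar)^\top d$ cancel exactly, leaving $f(\xbar+d) + r(\xbar+d) \leq f(\xbar) + r(\xbar) - (\tfrac{1}{\alphabar} - \tfrac{L}{2})\|d\|_2^2$, which is the claimed inequality; the restriction $\alphabar \in (0, 2/L)$ is exactly what makes the coefficient $\tfrac{1}{\alphabar} - \tfrac{L}{2}$ positive, so that the bound certifies strict decrease whenever $d \neq 0$. I expect the only delicate point to be the bound on the nonsmooth term: one must evaluate the subgradient of the group norm at the \emph{updated} block $[T]_{\Gcal_i}$ rather than at $[\xbar]_{\Gcal_i}$, since this is what produces the sharp constant $\tfrac{1}{\alphabar}$ (merely comparing objective values of the proximal subproblem at $\xbar+d$ and $\xbar$ would yield only the weaker constant $\tfrac{1}{2\alphabar}$). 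Everything else is routine, and the union-of-groups hypothesis on $\Ical$ is used only to guarantee that the per-group optimality conditions assemble cleanly into the stated global bound.
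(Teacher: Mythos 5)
Your proof is correct and follows essentially the same route as the paper: the paper's own proof of this lemma simply cites \cite[Lemma~10.4]{beck2017first} for the case $\Ical = \{1,\dots,n\}$ and \cite[Lemma~11.9]{beck2017first} for the group-restricted case, and the proofs of those cited results combine the descent lemma for $f$ with the subgradient optimality conditions of the (group-separable) proximal subproblem so that the linear terms cancel, exactly as you do. In effect, your write-up supplies, group by group, the details that the paper delegates to the reference.
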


The next result shows that, when restricted to certain groups, the size of the PG step is bounded above by the gradient of the objective function.

\begin{lemma}\label{lem:g>s}
If the pair $(\xbar,\alphabar)$ and group $\Gcal_i$ satisfy $\alphabar\in(0,1]$, $[\xbar]_{\Gcal_i} \neq 0$, and $[\xbar+s(\xbar,\alphabar)]_{\Gcal_i} \neq 0$, where $s(\xbar,\alphabar)$ is defined in~\eqref{def:pg-step}, then
$$
\|\nabla_{\Gcal_i} (f + r)(\xbar)\|_2
\geq \|[s(\xbar,\alphabar)]_{\Gcal_i}\|_2.
$$
\end{lemma}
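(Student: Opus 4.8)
The plan is to reduce the claimed inequality to an elementary comparison between two ``gradient-like'' vectors and then settle it with Cauchy--Schwarz. First I would exploit the two nonvanishing hypotheses. Because $[\xbar]_{\Gcal_i}\neq 0$, the regularizer $r$ is differentiable in the block $\Gcal_i$ at $\xbar$, so that
\[
\nabla_{\Gcal_i}(f+r)(\xbar) = \nabla_{\Gcal_i} f(\xbar) + \lambda_i\frac{[\xbar]_{\Gcal_i}}{\|[\xbar]_{\Gcal_i}\|_2} =: a .
\]
Because $[\xbar+s(\xbar,\alphabar)]_{\Gcal_i} = [T(\xbar,\alphabar)]_{\Gcal_i}\neq 0$, the hypothesis forces the $\max$ in the block formula~\eqref{def:T} to equal its first argument, which is therefore positive, i.e.\ $1-\alphabar\lambda_i/\|v_i\|_2 > 0$ with $v_i := [\xbar]_{\Gcal_i}-\alphabar\nabla_{\Gcal_i}f(\xbar)$. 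Substituting this branch into~\eqref{def:T} and subtracting $[\xbar]_{\Gcal_i}$, a short computation collapses the step to
\[
[s(\xbar,\alphabar)]_{\Gcal_i} = -\alphabar\Big(\nabla_{\Gcal_i}f(\xbar)+\lambda_i\tfrac{v_i}{\|v_i\|_2}\Big) =: -\alphabar b ,
\]
so that $\|[s(\xbar,\alphabar)]_{\Gcal_i}\|_2 = \alphabar\|b\|_2$.

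With the step written as $-\alphabar b$, the claim becomes $\|a\|_2\geq \alphabar\|b\|_2$, and since $\alphabar\in(0,1]$ it suffices to prove the $\alphabar$-free inequality $\|a\|_2\geq\|b\|_2$. I would expand both squared norms: writing $g:=\nabla_{\Gcal_i}f(\xbar)$ and letting $u_a:=[\xbar]_{\Gcal_i}/\|[\xbar]_{\Gcal_i}\|_2$ and $u_b:=v_i/\|v_i\|_2$ be the two unit vectors, the common term $\|g\|_2^2+\lambda_i^2$ cancels and one is left with
\[
\|a\|_2^2-\|b\|_2^2 = 2\lambda_i\,\langle g,\,u_a-u_b\rangle .
\]
Thus everything reduces to showing $\langle g,\,u_a-u_b\rangle\geq 0$.

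This last inequality is the crux, and the key trick is to eliminate $g$ in favor of $[\xbar]_{\Gcal_i}$ and $v_i$ via the defining relation $v_i=[\xbar]_{\Gcal_i}-\alphabar g$, i.e.\ $\alphabar g = [\xbar]_{\Gcal_i}-v_i$. Then
\[
\alphabar\,\langle g,\,u_a-u_b\rangle = \langle [\xbar]_{\Gcal_i}-v_i,\,u_a-u_b\rangle = \big(\|[\xbar]_{\Gcal_i}\|_2-\langle [\xbar]_{\Gcal_i},u_b\rangle\big) + \big(\|v_i\|_2-\langle v_i,u_a\rangle\big),
\]
where I used $\langle [\xbar]_{\Gcal_i},u_a\rangle=\|[\xbar]_{\Gcal_i}\|_2$ and $\langle v_i,u_b\rangle=\|v_i\|_2$. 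Each parenthesized term is nonnegative by Cauchy--Schwarz (because $u_a,u_b$ are unit vectors), so the right-hand side is nonnegative; as $\alphabar>0$ this yields $\langle g,\,u_a-u_b\rangle\geq 0$, completing the chain $\|a\|_2\geq\|b\|_2\geq\alphabar\|b\|_2=\|[s(\xbar,\alphabar)]_{\Gcal_i}\|_2$.

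I expect the main obstacle to be organizational rather than deep: the one genuine subtlety is handling the soft-threshold formula correctly---verifying the active branch of the $\max$ from the hypothesis $[T(\xbar,\alphabar)]_{\Gcal_i}\neq0$ and algebraically simplifying the step to $-\alphabar b$---while the decisive comparison $\langle g,\,u_a-u_b\rangle\geq0$ becomes transparent only after the substitution $\alphabar g=[\xbar]_{\Gcal_i}-v_i$. Without that rewrite, a direct sign analysis of $\langle g,u_a\rangle-\langle g,u_b\rangle$ is messy, since $\langle g,[\xbar]_{\Gcal_i}\rangle$ may be negative; the substitution is precisely what lets both error terms be controlled by a single application of Cauchy--Schwarz.
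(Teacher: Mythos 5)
Your proof is correct, and it takes a genuinely different route at the decisive step. Both you and the paper open identically: differentiability of $r$ on the block (from $[\xbar]_{\Gcal_i}\neq 0$), identification of the active branch of the soft-threshold formula (from $[\xbar+s(\xbar,\alphabar)]_{\Gcal_i}\neq 0$, which also forces $v_i\neq 0$), the collapse of the step to $[s(\xbar,\alphabar)]_{\Gcal_i}=-\alphabar b$ with $b=g+\lambda_i v_i/\|v_i\|_2$ in your notation (your formula is the correct one; the paper's corresponding display carries a stray extra factor of $\alphabar$ inside the normalized term, a typo that is harmless there because that expression is only exploited at $\alphabar=1$), and the reduction to $g^Tu_a\geq g^Tu_b$ after the common terms $\|g\|_2^2+\lambda_i^2$ cancel. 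The divergence is in how that crux inequality is proved. The paper proves it only for $\alphabar=1$, via a case split on the sign of $g^Tx_i$ with a squaring-plus-Cauchy--Schwarz argument in each case, and then disposes of $\alphabar\in(0,1)$ by invoking the monotonicity in $\alphabar$ of the prox-gradient step norm, \cite[Theorem~10.9]{beck2017first}, applied blockwise. Your substitution $\alphabar g=[\xbar]_{\Gcal_i}-v_i$ instead converts the crux into
\begin{equation*}
\langle [\xbar]_{\Gcal_i}-v_i,\,u_a-u_b\rangle
=\bigl(\|[\xbar]_{\Gcal_i}\|_2-\langle [\xbar]_{\Gcal_i},u_b\rangle\bigr)
+\bigl(\|v_i\|_2-\langle v_i,u_a\rangle\bigr)\geq 0,
\end{equation*}
i.e., monotonicity of the normalization map $x\mapsto x/\|x\|_2$, settled by two applications of Cauchy--Schwarz, uniformly in $\alphabar>0$; the hypothesis $\alphabar\leq 1$ enters only through the trivial bound $\alphabar\|b\|_2\leq\|b\|_2$. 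Your version buys three things: it is case-free, it is self-contained (no appeal to an external prox-monotonicity theorem, whose blockwise use in the paper tacitly relies on separability of the prox across groups), and it in fact establishes the stronger statement $\|\nabla_{\Gcal_i}(f+r)(\xbar)\|_2\geq\|[s(\xbar,\alphabar)]_{\Gcal_i}\|_2/\alphabar$ for every $\alphabar>0$. What the paper's route buys is mainly modularity: it leans on a textbook fact rather than a bespoke identity.
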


With the preliminaries now completed, we can propose our new algorithm. 

%*********
% Section
%*********
\section{Proposed Algorithm Framework}\label{sec.algorithm}

We propose Algorithm~\ref{alg:main}, which we call \GroupFaRSA{} ({\tt Fa}st {\tt R}educed-{\tt S}pace {\tt A}lgorithm for {\tt Group} sparsity-inducing regularization) for solving problem~\eqref{prob} that uses ideas related to domain decomposition, subspace acceleration, and support identification.  An overview of the algorithm is described in Section~\ref{sec.alg-main}. During each iteration of our method, at least one of three subroutines is called. The three subroutines are described in Sections~\ref{sec.alg-cgstep}--\ref{sec.alg-lspg}. 

\subsection{Main algorithm (Algorithm~\ref{alg:main})}
\label{sec.alg-main}

Our main algorithm is formally stated as Algorithm~\ref{alg:main}. 
At the beginning of the $k$th iteration, $x_k$ and $\alpha_k > 0$ denote the current  solution estimate for problem~\eqref{prob} and the PG parameter, respectively.
We then compute $\spgk$ in Line~\ref{line:s} as the PG step associated with problem~\eqref{prob}, namely,
\begin{equation}\label{pg-update}
\spgk := s(x_k,\alpha_k) \ \ \text{with} \ \ \text{$s(x_k,\alpha_k)$ defined in~\eqref{def:pg-step}.}
\end{equation}
Although the repeated computation of PG steps is the basis for a first-order method, here we primarily use it to \emph{predict} the zero/nonzero structure of a solution and to formulate optimality measures.  Specifically, in Line~\ref{line:Icgkbar} we compute the index set
\begin{equation}\label{def:Icgkbar}
\begin{aligned}
\Icgkbar &:= \{\text{$j \in \Gcal_i$ : $[x_k]_{\Gcal_i} \neq 0$, \ $[x_k + s_k]_{\Gcal_i} \neq 0$, and} \\
       &\phantom{llllllllllllllllll}
       \text{$\|[x_k]_{\Gcal_i}\|_2 \geq \kappa_1\|\nabla_{\Gcal_i}(f+r)(x_k)\|_2$}\} 
\end{aligned}
\end{equation}
for some $\kappa_1\in (0,\infty)$. The groups of variables that compose $\Icgkbar$ are \emph{candidates} for use in a Newton-type calculation aimed to accelerated convergence.  Before using them, however, we first check to see if each candidate block is sufficiently far from zero, and those that are not are removed.  Specifically, we first define
\begin{equation}\label{def:Iksmall}
\Ismallk := \{j\in\Gcal_i: \Gcal_i\subseteq\Icgkbar \ \text{and} \ \|[x_k]_{\Gcal_i}\|_2 < \kappa_2 \|\nabla_{\Icgkbar}(f+r)(x_k)\|_2^p\}
\end{equation}
for some $\{\kappa_2,p\}\subset (0,\infty)$, and then define in Line~\ref{line:Icgk} the sets and optimality measures 
\begin{equation}\label{def:Icgk}
\left\{
\begin{aligned}
\Icgk &:= \Icgkbar\setminus \Ismallk \\
\Ipgk &:= \{1,2,\dots,n\} \setminus \Icgk
\end{aligned}
\right\}
\quad
\text{and}
\quad
\left\{
\begin{aligned}
\chicgk &:= \|[\spgk]_{\Icgk}\|_2 \\
\chipgk &:= \|[\spgk]_{\Ipgk}\|_2
\end{aligned}
\right\}
\end{equation}
where by convention $\|[\ \cdot\ ]_{\emptyset}\|_2 = 0$. (See Lemma~\ref{optimality-measure} for a justification that these sets together represent a measure of optimality.) This construction of sets also ensures that the subvector of~$x_k$ that corresponds to $\Gcal_i$ for each $\Gcali\subseteq\Icgk$ is at least a distance
\begin{equation}\label{def:rhoki}
\rho_{k,i} 
:= \max\{\kappa_1\|\nabla_{\Gcal_i}(f+r)(x_k)\|_2, \kappa_2\|\nabla_{\Icgk}(f+r)(x_k)\|_2^p\}
\end{equation}
away from zero (see Lemma~\ref{lem:cg-props}(i)), which is crucial in our analysis.

Armed with $\chipgk$ and $\chicgk$, Algorithm~\ref{alg:main} seeks decrease in the objective function in a subspace that is likely to allow for significant progress.  We consider two cases.

\smallskip
\noindent\textbf{Case 1: the condition $\chipgk \leq \chicgk$ checked in Line~\ref{line:main.if} holds.}
In this case, the inequality $\chipgk \leq \chicgk$ indicates that significant reduction in the objective function can be achieved by focusing on variables in the set $\Icgk$. Therefore, in Line~\ref{line:Ik-phi} we choose any index set $\Ical_k$ that is (i) a subset of $\Icgk$, (ii) equal to the union of some subset of groups from $\Gcal$, and (iii) the size of the PG step restricted to the index set $\Ical_k$ is at least a fraction of the size of the PG step when restricted to the index set $\Icgk$.  The easiest choice that satisfies these conditions is $\Ical_k \equiv \Icgk$, but for large-scale problems it may be beneficial to restrict $|\Ical_k|$.  The opposite extreme choice is selecting $\Ical_k$ as the group $\Gcal_i$ contained in $\Icgk$ with largest associated PG step, in which case one would choose $\varphi = 1/\sqrt{n_\Gcal}$ for the user-defined parameter in Line~\ref{line:Ik-phi}. Once $\Ical_k$ has been selected, a \emph{reduced-space} gradient $g_k$ and \emph{reduced-space} positive-definite matrix $H_k$ is computed in Line~\ref{line:gH}, where the derivatives are taken with respect to variables in $\Ical_k$.  (In practice, $H_k$ could be selected based on $\nabla^2_{\Ical_k\Ical_k}(f+r)(x_k)$ to ensure a fast local convergence rate.)  Note that such derivatives exist since by construction $\Ical_k\subseteq\Icgk\subseteq\Icgkbar$, and from~\eqref{def:Icgkbar} the objective function $f+r$ is differentiable with respect to groups of variables in $\Icgkbar$.  Next, $g_k$ and $H_k$ are used to compute a direction $\dbar_k$ of sufficient descent for $f+r$ by calling the subroutine {\sc cg\_direction} (see Section~\ref{sec.alg-cgstep}). Once a full-space vector $d_k$ is obtained by padding $\dbar_k$ with zeros in Line~\ref{line:d-phi}, a  \emph{projected} line search is performed by calling subroutine {\sc update\_cg} in Line~\ref{line:ls-cg} (see Section~\ref{sec.alg-lscg}).

\smallskip
\noindent\textbf{Case 2: the condition $\chipgk \leq \chicgk$ checked in Line~\ref{line:main.if} does not hold.}
In this case, the inequality $\chipgk > \chicgk$ indicates that significant reduction in the objective function can be achieved by focusing on variables in the set $\Ipgk$. Therefore, in Line~\ref{line:Ik-beta}, we choose any index set $\Ical_k$ that is (i) a subset of $\Ipgk$, (ii) equal to the union of some subset of groups from $\Gcal$, and (iii) the size of the PG step restricted to the index set $\Ical_k$ is at least a fraction of the size of the PG step restricted to the index set $\Ipgk$.  The easiest choice that satisfies these conditions is $\Ical_k \equiv \Ipgk$. Once $\Ical_k$ has been chosen, the next iterate is obtained by performing a line search along the PG direction in Line~\ref{line:ls-pg} by calling the subroutine {\sc update\_pg} (for details, see Section~\ref{sec.alg-lspg}). If the subroutine returns $\flagpgk = \decalpha$, the PG parameter is decreased for the next iteration.

\subsection{Computing a CG direction (Algorithm~\ref{alg:cg-step})}
\label{sec.alg-cgstep}

This subroutine returns a reduced-space direction $\dbar_k$ that satisfies conditions~\eqref{line:dbar-1}--\eqref{line:dbar-3}.  We call it a reduced-space vector because the inputs $g_k$ and $H_k$ are elements in $\R{|\Ical_k|}$ and $\R{|\Ical_k|\times|\Ical_k|}$, respectively, where $\Ical_k$ is computed in Line~\ref{line:Ik-phi} of Algorithm~\ref{alg:main}. Condition~\eqref{line:dbar-1} ensures that $\dbar_k$ is a descent direction for the objective function as a consequence of how the reference direction $d_k^R$ is computed in Line~\ref{line:dR}. Condition~\eqref{line:dbar-2} ensures that $\dbar_k$ reduces the model $m_k$ at least as much as a zero step.  Finally, condition~\eqref{line:dbar-3} promotes fast \emph{local} convergence of the iterate sequence $\{x_k\}$ (see Section~\ref{sec.lc}),  but its enforcement (or lack of enforcement) is irrelevant with respect to the complexity result that we prove in Section~\ref{sec.gc}. The subroutine name {\sc cg\_direction} indicates our intent to use the linear CG algorithm in our implementation, although other possible options include a block-wise coordinate descent method applied to the model $m_k$  in~\eqref{def:mk}. In particular, the direction associated with every iteration of the CG algorithm satisfies conditions~\eqref{line:dbar-1}--\eqref{line:dbar-2}, and condition~\eqref{line:dbar-3} is satisfied by all sufficiently large CG iterations. Thus, the requirements of this  subroutine can always be met.

\subsection{Reduced-space search using the CG direction (Algorithm~\ref{alg:ls-cg})}
\label{sec.alg-lscg}

This subroutine performs a search using the direction $d_k$ returned by the subroutine {\sc cg\_direction} in Line~\ref{line:cg-direction} of Algorithm~\ref{alg:main}. For an illustration of this search, which incorporates  projections, see Figure~\ref{fig:projection}. The approach uses the direction $d_k$, without modification, for each block of variables $\Gcal_i$ such that the ray $\{[x_k+\tau d_k]_{\Gcal_i}: \tau \geq 0\}$ does not intersect the ball centered at zero of radius $\rhobar_{k,i} = \min\{\rho_{k,i},\sin(\theta)\|[x_k]_{\Gcal_i}\|_2\}$, where $\rho_{k,i}$ is defined in~\eqref{def:rhoki} and $\theta\in(0,\pi/2)$ is a user-defined parameter. When they do intersect, we first compute $\tau_{k,i}$ as the smallest step along the Newton direction (restricted to block $\Gcal_i$) that intersects the ball.  Then, during the search that follows, anytime the trial step size $\xi^j$ is larger than $\tau_{k,i}$, the trial step for block $\Gcal_i$ is set to zero; otherwise, the Newton direction is used so that the trial step (with respect to block $\Gcal_i$) is $[x_k + \xi^jd_k]_{\Gcal_i}$ (see Line~\ref{line:project}). If termination occurs in Line~\ref{line:ls-phi-nz}, then a new block of variables will become zero, in which case we require the objective function not to increase (see Line~\ref{line:ls-phi-nz-return}).  On the other hand, if termination occurs in Line~\ref{line:ls-phi-return3}, then it indicates that the objective function has been sufficiently reduced (see Line~\ref{line:ls-phi-SD}) and no new groups of zeros have been formed. 

\subsection{Reduced-space line search along a PG step (Algorithm~\ref{alg:ls-beta})}
\label{sec.alg-lspg}

This subroutine performs a line search along the PG direction $P_\Ical(s_k)$.  The search ensures that the next iterate yields decrease in the objective 
of size at least $(\eta\xi^j/\alpha_k)\|P_{\Ical_k}(s_k)\|_2^2$ for some positive integer $j$ computed within the while loop in Line~\ref{line:armijo-pg}.   Once the while loop terminates, the update $\flagpgk \gets \samealpha$ is made if $j = 0$, and set as $\flagpgk \gets \decalpha$ otherwise. The motivation for this update is Lemma~\ref{lem:decrease-known}, which shows that the while loop in Line~\ref{line:armijo-pg} will terminate with $j = 0$ if the PG parameter $\alpha_k$ is sufficiently small.  Therefore, anytime $j > 0$, Algorithm~\ref{alg:ls-beta} returns $\flagpgk \gets \decalpha$ to Algorithm~\ref{alg:main} in Line~\ref{line:ls-pg} so that the PG parameter value for the next iteration is reduced by a factor of $\xi\in(0,1)$ in Line~\ref{line:alpha-decrease}.

% ------------------------------
% Main Algorithm: Group-FaRSA
% ------------------------------

\balgorithm[p]
  \caption{\GroupFaRSA{} for solving problem~\eqref{prob}.}
  \label{alg:main}
  \balgorithmic[1]
    \State \textbf{Input:} $x_0$
    \State \textbf{Constants:} $\{\varphi,\xi,\eta,\zeta\}\subset (0,1)$, $\{\kappa_1,\kappa_2,p\} \subset (0,\infty)$, $\theta\in(0,\pi/2)$, and $q \in[1,2]$.
    \State Choose any initial PG parameter $\alpha_0 \in (0,1]$. \label{line:alpha0}
    \For{$k = 0,1,2,\dots$}
      \State Compute the step $\spgk$ from~\eqref{pg-update}.\label{line:s}
      \State Compute the set $\Icgkbar$ 
       from~\eqref{def:Icgkbar}. \label{line:Icgkbar}
      \State Compute $\Icgk$ and $\Ipgk$ and their optimality measures $\chicgk$ and $\chipgk$ from~\eqref{def:Icgk}. \label{line:Icgk}
      \If{$\chipgk \leq \chicgk$} \label{line:main.if} 
        \State \label{line:Ik-phi} Choose any $\Ical_k\subseteq\Icgk$ such that 
        $$
        \|[\spgk]_{\Ical_k}\|_2 \geq \varphi\|[\spgk]_{\Icgk}\|_2 \equiv \varphi\chicgk \text{ and } \Ical_k \text{ is the union of some } \{\Gcal_j\}.
        $$  
        \State Set $g_k \gets \nabla_{\Ical_k} (f + r)(x_k)$ and pick a positive-definite $H_k\in\R{|\Ical_k|\times|\Ical_k|}$ . \label{line:gH}
        \State Call Algorithm~\ref{alg:cg-step} to obtain $\dbar_k \gets$ {\sc cg\_direction}$(g_k,H_k)$. \label{line:cg-direction}
        \State Set $[d_k]_{\Ical_k} \gets \dbar_k$ and $[d_k]_{\Ical_k^c} \gets 0$. \label{line:d-phi}
        \State Call Algorithm~\ref{alg:ls-cg} to obtain $(x_{k+1},\flagcgk) \gets$ {\sc update\_cg}$(x_k,d_k,\Ical_k)$.\label{line:ls-cg}
        \State Set $\alpha_{k+1} \gets \alpha_k$.
      \Else 
        \State \label{line:Ik-beta} Choose any $\Ical_k\subseteq\Ipgk$ such that  
        $$
        \|[\spgk]_{\Ical_k}\|_2 \geq \varphi \|[\spgk]_{\Ipgk}\|_2 \equiv \varphi \chipgk \text{ and } \Ical_k \text{ is the union of some } \{\Gcal_j\}.
        $$
        \State Call Algorithm~\ref{alg:ls-beta} to obtain $(x_{k+1},\flagpgk) \gets$ {\sc update\_pg}$(x_k,\spgk,\alpha_k,\Ical_k)$. \label{line:ls-pg}
      \If{$\flagpgk = \decalpha$} \label{line:main-alpha-if}
         \State $\alpha_{k+1} \gets \zeta\alpha_k$.\label{line:alpha-decrease}
      \Else   
         \State $\alpha_{k+1} \gets \alpha_k$.\label{line:alpha-same}
      \EndIf
      \EndIf   
    \EndFor
  \algstore{bkbreak}
  \ealgorithmic
\ealgorithm

% ------------------------------
% Compute CG direction
% ------------------------------
\balgorithm[p]
  \caption{Computing $\dbar_k$ in Line~\ref{line:cg-direction} of Algorithm~\ref{alg:main}.}
  \label{alg:cg-step}
  \balgorithmic[1]
  \algrestore{bkbreak}
  \Procedure{$\dbar_k =$ cg\_direction}{$g_k,H_k$}
  \State \textbf{Constant:} $q$ is provided by Algorithm~\ref{alg:main}.
  \State Define the model
        \bequation\label{def:mk}
           m_k(d) := g_k^T d + \thalf d^T H_k d.
        \eequation
  \State Compute the reference direction (an approximate minimizer of $m_k$) as \label{line:dR}
        $$
          d_k^{R} \gets -\beta_k g_k, \ \text{where} \ \beta_k \gets \|g_k\|_2^2/(g_k^T H_k g_k).
        $$
        \State Choose $\mu_k\in(0,1]$ and then compute any $\dbar_k \approx \argmin{d} \ m_k(d)$ that satisfies \label{line:dbar}
        \begin{align}
          g_k^T \dbar_k &\leq g_k^T d_k^R,  \label{line:dbar-1} \\ 
          m_k(\dbar_k) &\leq m_k(0), \ \ \text{and}  \label{line:dbar-2} \\ 
          \|H_k\dbar_k+g_k\|_2 &\leq \mu_k \|g_k\|_2^q. \label{line:dbar-3}
        \end{align}
    \State \textbf{return} $\dbar_k$ 
  \EndProcedure
  \algstore{bkbreak}
  \ealgorithmic
\ealgorithm

% ------------------------------
% Line search for CG iteration
% ------------------------------
\balgorithm[t]
  \caption{Computing $x_{k+1}$ in Line~\ref{line:ls-cg} of Algorithm~\ref{alg:main}.}
  \label{alg:ls-cg}
  \balgorithmic[1]
  \algrestore{bkbreak}
  \Procedure{$(x_{k+1},\flagcgk) =$ update\_cg}{$x_k,d_k,\Ical_k$}
    \State \textbf{Constants:} $\eta$, $\xi$, and $\theta$ provided by Algorithm~\ref{alg:main}.
    \For{each $i$ such that $\Gcal_i\subseteq\Ical_k$}
      \State Compute $\rho_{k,i}$ as defined in~\eqref{def:rhoki}.
      \State Set $\rhobar_{k,i} \gets \min\{\rho_{k,i},\sin(\theta)\|[x_k]_{\Gcal_i}\|_2\}$. \label{line:rhobarki}
      \If{$\{[x_k+\tau d_k]_{\Gcal_i}: \tau \geq 0\} \cap \{x\in\R{|\Gcal_i|}: \|x\|_2 \leq \rhobar_{k,i}\} = \emptyset$}
         \State Set $\tau_{k,i} \gets \infty$.
      \Else
         \State Set $\tau_{k,i}$ as the smallest positive root of $\|[x_k+\tau d_k]_{\Gcal_i}\|_2 = \rhobar_{k,i}$.
      \EndIf   
    \EndFor
    \State Set $j \gets 0$ and $\tau_k :=  \min_{i}\{\tau_{k,i} : \Gcal_i\subseteq\Ical_k\}$. \label{line:def-tauk}
    \While{$\xi^j \geq \tau_k$} \label{line:update-cg-while}
       \State Set $[y_j]_{\Ical_k^c} \gets [x_k]_{\Ical_k^c}$.
       \For{each $i$ such that $\Gcal_i\in\Ical_k$}
       \State Set $
       [y_j]_{\Gcal_i} \gets
       \begin{cases}
       [x_k]_{\Gcal_i} + \xi^j [d_k]_{\Gcal_i} & \text{if $\xi^j < \tau_{k,i}$,} \\
       0 & \text{if $\xi^j \geq \tau_{k,i}$.} 
       \end{cases}
       \label{line:project}
       $
       \EndFor
       \If{$f(y_j) + r(y_j) \leq f(x_k) + r(x_k)$} \label{line:ls-phi-nz}
         \State \textbf{return} $x_{k+1} \gets y_j$ and $\flagcgk \gets \zb$ \label{line:ls-phi-nz-return} 
       \EndIf
       \State Set $j \gets j + 1$.
    \EndWhile
    \Loop\label{line:update-cg-loop}
       \State Set $y_j \gets x_k+\xi^jd_k$.
     \If{$f(y_j) + r(y_j) \leq f(x_k) + r(x_k) + \eta \xi^j \nabla_{\Ical_k}(f + r)(x_k)^T [d_k]_{\Ical_k}$} \label{line:ls-phi-SD}
        \State \textbf{return} $x_{k+1} \gets y_j$ and $\flagcgk \gets \sd$
         \label{line:ls-phi-return3} 
      \EndIf
      \State Set $j \gets j + 1$. \label{line:j-inc} 
    \EndLoop
  \EndProcedure
  \algstore{bkbreak}
  \ealgorithmic
\ealgorithm

% ------------------------
% Projection illustration
% ------------------------
\begin{figure}[h!]
\centering
\includegraphics[scale=0.4,trim=0 0 0 0,clip]{./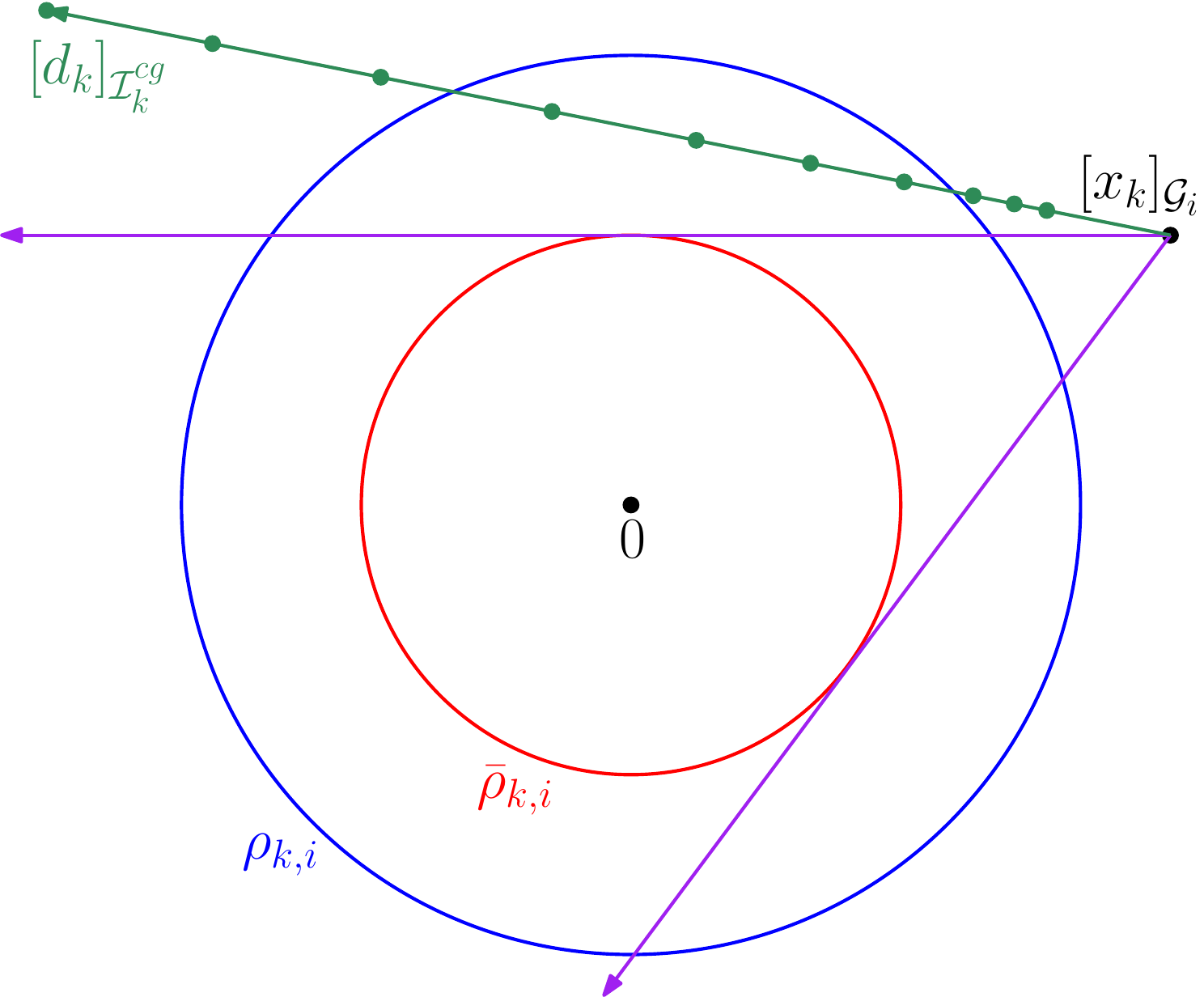}
\includegraphics[scale=0.4,trim=0 0 0 0,clip]{./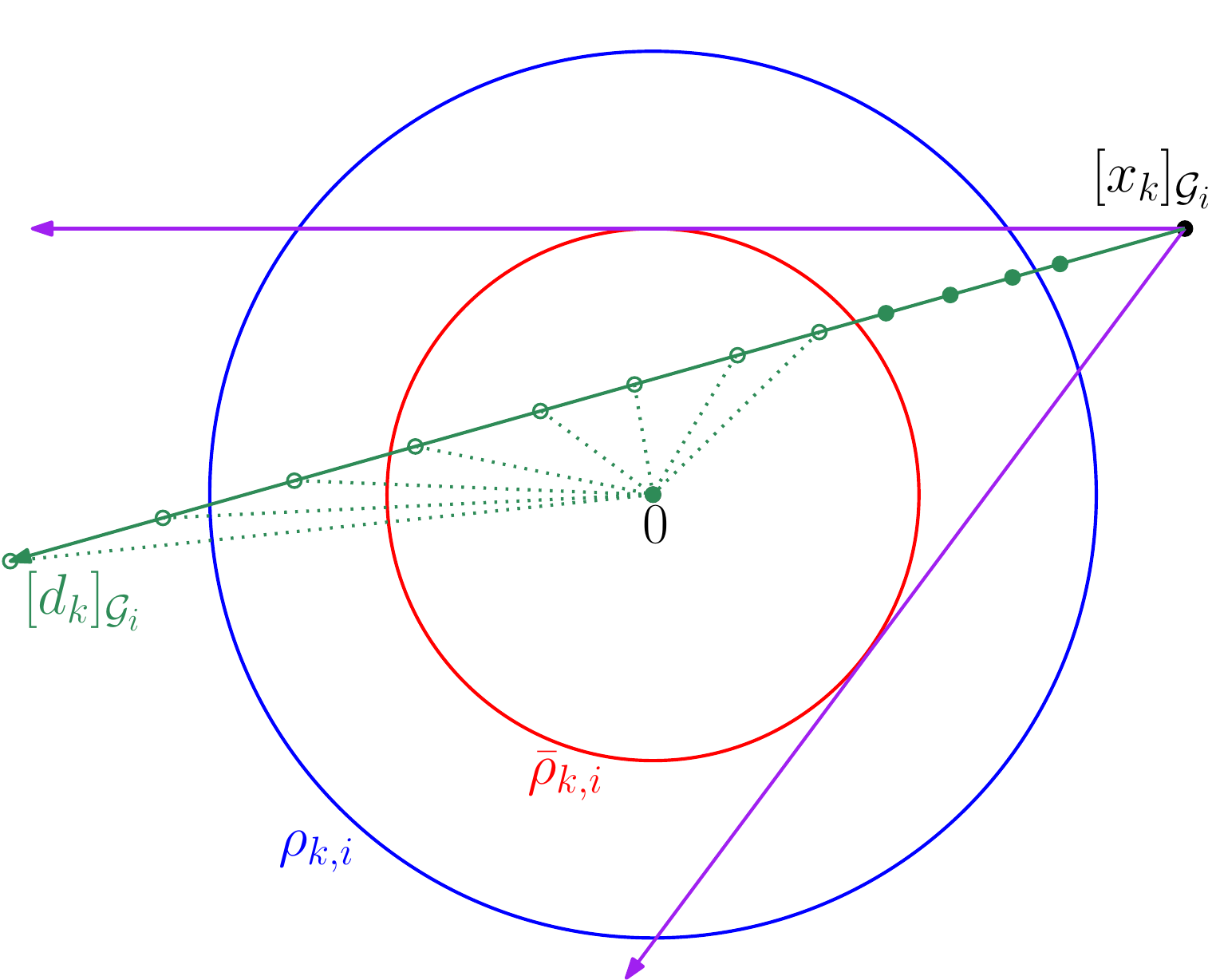}
\caption{\label{fig:projection} The reduced-space projected search based on the Newton-CG direction $d_k$ described in Section~\ref{sec.alg-lscg}. In the figure on the left, the direction $d_k$ does not intersect the ball of radius $\rhobarki$.  In this case, standard backtracking is used, as indicated by the solid green dots. In the figure on the right, the direction $d_k$ does intersect the ball of radius $\rhobarki$.  In this case, all points after the first point of intersection (indicated by hollow green circles) are projected to zero.  Once the backtracking points leave the ball of radius $\rhobarki$ (indicated as solid green dots), standard backtracking is resumed.}
\end{figure}

% ------------------------------
% Line search for PG iteration
% ------------------------------
\balgorithm[t]
  \caption{Computing $x_{k+1}$ in Line~\ref{line:ls-pg} of Algorithm~\ref{alg:main}.}
  \label{alg:ls-beta}
  \balgorithmic[1]
  \algrestore{bkbreak}
  \Procedure{$(x_{k+1}, \flagpgk) =$ update\_pg}{$x_k,\spgk,\alpha_k,\Ical_k$}
    \State \textbf{Constants:} $\eta$ and $\xi$ provided by Algorithm~\ref{alg:main}.
    \State Set $j \gets 0$ and $y_0 \gets x_k + P_{\Ical_k}(\spgk)$.\label{line:y0-pg}
    \While{$f(y_j) + r(y_j) > f(x_k) + r(x_k) - \eta\xi^j \tfrac{1}{\alpha_k}\|P_{\Ical_k}(\spgk)\|_2^2$}\label{line:armijo-pg}
      \State Set $j \gets j + 1$ and then $y_j \gets x_k + \xi^j P_{\Ical_k}(\spgk)$. \label{line:y-up-pg}
    \EndWhile
    \If{$j = 0$} \label{line:update-pg-if}
       \State \textbf{return} $x_{k+1} \gets y_{j}$ and $\flagpgk \gets \samealpha$ \label{line:update2}
    \Else
       \State \textbf{return} $x_{k+1} \gets y_{j}$ and $\flagpgk \gets \decalpha$ \label{line:update2b}
    \EndIf   
  \EndProcedure
  \ealgorithmic
\ealgorithm
 
%*********
% Section
%*********

\section{Analysis}\label{sec.analysis}

Our analysis considers worst-case complexity (Section~\ref{sec.gc}) and local convergence (Section~\ref{sec.lc}) properties  of Algorithm~\ref{alg:main}.  
To identify an approximate solution to problem~\eqref{prob}, we use the measure $\max\{\chipgk,\chicgk\}$, as we now justify.

\begin{lemma}\label{optimality-measure} 
Let $\Kcal\subseteq\N{}$ be such that $\lim_{k\in\Kcal} x_k = x_*$ and $\lim_{k\in\Kcal} \alpha_k = \alpha_* > 0$. Then, $x_*$ is a solution to problem~\eqref{prob} if and only if $\lim_{k\in\Kcal} \max\{\chipgk,\chicgk\} = 0$.
\end{lemma}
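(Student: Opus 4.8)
The plan is to show that the composite quantity $\max\{\chipgk,\chicgk\}$ is, up to constants, equivalent to the norm of the full PG step $\|\spgk\|_2$, and then to characterize the vanishing of the PG step in terms of optimality through a limiting argument that exploits continuity. First I would observe that, by construction in~\eqref{def:Icgk}, the sets $\Icgk$ and $\Ipgk$ partition $\{1,2,\dots,n\}$, so that for every $k$ one has
\begin{equation*}
\|\spgk\|_2^2 = \|[\spgk]_{\Icgk}\|_2^2 + \|[\spgk]_{\Ipgk}\|_2^2 = (\chicgk)^2 + (\chipgk)^2,
\end{equation*}
whence $\max\{\chipgk,\chicgk\} \leq \|\spgk\|_2 \leq \sqrt{2}\,\max\{\chipgk,\chicgk\}$. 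It follows that $\lim_{k\in\Kcal}\max\{\chipgk,\chicgk\} = 0$ if and only if $\lim_{k\in\Kcal}\|\spgk\|_2 = 0$, regardless of how the index sets vary with $k$. This reduces the claim to proving that $\lim_{k\in\Kcal}\|\spgk\|_2 = 0$ holds if and only if $x_*$ solves~\eqref{prob}.

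Next I would establish that the map $(\xbar,\alphabar)\mapsto s(\xbar,\alphabar)$ is continuous on $\R{n}\times(0,\infty)$. This follows from the closed-form, group-separable expression for $T(\xbar,\alphabar)$ in~\eqref{def:T}: each block is a continuous function of $[\xbar]_{\Gcal_i}-\alphabar\nabla_{\Gcal_i}f(\xbar)$, which is itself continuous in $(\xbar,\alphabar)$ because $\nabla f$ is continuous under Assumption~\ref{ass.first}. The only delicate point is where the denominator $\|[\xbar]_{\Gcal_i}-\alphabar\nabla_{\Gcal_i}f(\xbar)\|_2$ vanishes; there the factor $\max\{\cdot,0\}$ is zero on a neighborhood and is multiplied by a vector tending to zero, so the block map is continuous at such points as well. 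Since $x_k\to x_*$ and $\alpha_k\to\alpha_*>0$ along $\Kcal$, continuity gives $\lim_{k\in\Kcal}\|\spgk\|_2 = \|s(x_*,\alpha_*)\|_2$, so that $\lim_{k\in\Kcal}\|\spgk\|_2 = 0$ is equivalent to $s(x_*,\alpha_*) = 0$.

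Finally I would identify the condition $s(x_*,\alpha_*)=0$, i.e.\ $T(x_*,\alpha_*)=x_*$, with optimality. By~\eqref{def:pg-update}, $T(x_*,\alpha_*)$ is the minimizer of a strongly convex function, so $T(x_*,\alpha_*)=x_*$ holds if and only if $x_*$ satisfies the first-order condition $0\in \tfrac{1}{\alpha_*}\big(x_* - (x_* - \alpha_*\nabla f(x_*))\big) + \partial r(x_*)$, which simplifies, upon dividing by $\alpha_*>0$, to $0\in\nabla f(x_*) + \partial r(x_*)$. Because $f$ is convex and $r$ is convex, this stationarity condition is necessary and sufficient for $x_*$ to be a global minimizer of $f+r$, i.e.\ a solution of~\eqref{prob}. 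Chaining the three equivalences establishes the result.

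I expect the continuity argument in the second paragraph to be the main obstacle, since it is the only step requiring genuine care: the group soft-thresholding formula is nonsmooth precisely at the points where its argument vanishes, and one must verify continuity there rather than merely on the set where each block is nonzero. As an alternative that sidesteps the explicit formula, one could invoke nonexpansiveness of the proximal operator of $r$ together with continuity of $\nabla f$ to obtain joint continuity of $s$ directly.
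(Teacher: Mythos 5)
Your proof is correct, and its skeleton is the same as the paper's: reduce $\max\{\chipgk,\chicgk\}$ to $\|\spgk\|_2$, pass to the limit using continuity of the PG map on $\R{n}\times(0,\infty)$, and then characterize optimality of $x_*$ as the fixed-point condition $T(x_*,\alpha_*)=x_*$. The difference lies in how the two key ingredients are justified. The paper cites general results for both: continuity of $T$ follows from a parametric-minimization theorem \cite[Theorem~3.2.8]{ConGT00a}, and the fixed-point characterization of solutions follows from \cite[Theorem~10.7]{beck2017first}. You instead prove both from scratch: continuity directly from the closed-form group soft-thresholding formula~\eqref{def:T}, including the only delicate point---where the thresholding argument vanishes---which you dispose of correctly by noting that, since $\alpha_*\lambda_i>0$, the block is identically zero on a neighborhood of such points; and the fixed-point characterization by writing the first-order condition of the strongly convex prox subproblem and simplifying it to $0\in\nabla f(x_*)+\partial r(x_*)$, which by convexity of $f+r$ is equivalent to global optimality. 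Your version also makes explicit a step the paper leaves implicit: since $\Icgk$ and $\Ipgk$ partition $\{1,\dots,n\}$, one has $\|\spgk\|_2^2=(\chicgk)^2+(\chipgk)^2$, hence $\max\{\chipgk,\chicgk\}\leq\|\spgk\|_2\leq\sqrt{2}\,\max\{\chipgk,\chicgk\}$, which is what actually ties the algorithm's optimality measures to the PG step regardless of how the index sets vary with $k$. Both routes are sound; the paper's is shorter at the cost of two external citations, while yours is self-contained and verifies the nonsmooth continuity point rather than delegating it.
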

\begin{proof}
First, we may apply~\cite[Theorem~3.2.8]{ConGT00a}, with the choice $y = (\xbar,\alphabar)$ and the set map $\Ccal(y) = \R{n}$, to the objective function appearing in~\eqref{def:pg-update} to conclude that $T(\xbar,\alphabar)$ is continuous on $\R{n} \times (0,\infty)$.  Combining this property with the definition of $T$ in~\eqref{def:pg-update} and the assumption that $\lim_{k\in\Kcal}(x_k,\alpha_k) = (x_*,\alpha_*)$ with $\alpha_* > 0$ shows that
$\lim_{k\in\Kcal} s_k
= \lim_{k\in\Kcal} \big( T(x_k,\alpha_k) - x_k \big) 
=  T(x_*,\alpha_*) - x_*.$
It follows from this limit and the fact that  Assumption~\ref{ass.first} and~\cite[Theorem~10.7]{beck2017first} together show that $x_*$ is a solution to problem~\eqref{prob} if and only if $T(x_*,\alpha_*) = x_*$.
\end{proof}

If $\max\{\chicgk,\chipgk\} = 0$ for some $k \in \N{}$, then Lemma~\ref{optimality-measure} implies that $x_k$ is a solution to problem~\eqref{prob}.  Hence, all that remains is to consider the behavior of Algorithm~\ref{alg:main} when an infinite number of iterations is performed.  To focus on this case, we make the following assumption, which is assumed to hold throughout the rest of this section.

\begin{assumption}\label{ass.nonzero}
For all iterations $k\in\N{}$, it holds that $\max\{\chicgk,\chipgk\} > 0$.
\end{assumption}

Since our analysis considers the properties of the sequence of iterates, it is convenient to define the following partition of iterations performed by Algorithm~\ref{alg:main}:
\begin{align*}
\setcg   &:= \{k\in\N{}: \text{Line~\ref{line:ls-cg} is reached during the $k$th iteration}\}, \\
\setcgZERO &:= \{k\in\setcg: \text{subroutine {\sc update\_cg} returns $\flagcgk = \zb$ in Line~\ref{line:ls-cg}}\}, \\
\setcgSD &:= \{k\in\setcg: \text{subroutine {\sc update\_cg} returns $\flagcgk\! =\! \sd$ in Line~\ref{line:ls-cg}}\}, \\
\setpg   &:= \{k\in\N{}: \text{Line~\ref{line:ls-pg} is reached during the $k$th iteration}\}, \\
\setpgSAME   &:= \{k\in\setpg: \text{subroutine {\sc update\_pg} returns $\flagpgk = \samealpha$ in Line~\ref{line:ls-pg}}\}, \ \text{and} \\
\setpgDEC   &:= \{k\in\setpg: \text{subroutine {\sc update\_pg} returns $\flagpgk = \decalpha$ in Line~\ref{line:ls-pg}}\},
\end{align*}
so that
$\setcg = \setcgZERO \cup \setcgSD$,
$\setpg = \setpgSAME \cup \setpgDEC$, and
$\N{} = \setcg\cup\setpg$.

Finally, we assume that the symmetric and positive-definite matrices required in Line~\ref{line:gH} are chosen to be bounded and uniformly positive definite.

\begin{assumption}\label{ass:Hk}
The matrix sequence $\{H_k\}_{k\in\setcg}$ chosen in Line~\ref{line:gH} is bounded and uniformly positive definite.  That is, there exist constants $0 < \mumin \leq \mumax < \infty$ such that $\mumin \|v\|_2^2  \leq v^T H_k v \leq \mumax \|v\|_2^2$ for all $k\in\setcg$ and $v\in\R{|\Ik|}$.
\end{assumption}

% --------------
% Section
% --------------
\subsection{Complexity result}
\label{sec.gc}

We first focus our attention on iterations in $\setpg$.  The next result shows  that Algorithm~\ref{alg:ls-beta} is well posed and that the new iterate that it produces satisfies a decrease property that will be useful for our complexity analysis.

\begin{lemma}\label{lem:K-pg}
For each $k\in \setpg$, Algorithm~\ref{alg:ls-beta} is called in Line~\ref{line:ls-pg} and successfully returns $x_{k+1}$ and $\flagpgk$.  Moreover, the value of $\flagpgk$ indicates whether $k\in\setpgDEC$ or $k\in\setpgSAME$, and for these respective cases the following properties hold:
\begin{itemize}
\item[(i)] If $k\in\setpgSAME$, then $\alpha_{k+1} = \alpha_k$ and
\begin{equation}\label{result:dec-pg}
f(x_{k+1}) + r(x_{k+1})
\leq f(x_k) + r(x_k) - \tfrac{\eta\varphi^2}{\alpha_k} (\chipgk)^2.
\end{equation}
\item[(ii)] If $k\in\setpgDEC$, then $\alpha_{k+1} = \xi\alpha_k$ and
$
f(x_{k+1}) + r(x_{k+1})
< f(x_k) + r(x_k).
$
\end{itemize}
\end{lemma}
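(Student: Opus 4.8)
The plan is to prove the four assertions in order, treating finite termination of the backtracking loop as the central issue and then reading off the decrease estimates from the loop-exit inequality. Before anything else, I would record that $k \in \setpg$ forces the test in Line~\ref{line:main.if} to fail, so $\chipgk > \chicgk \geq 0$ and hence $\chipgk > 0$; combined with the selection rule in Line~\ref{line:Ik-beta} and the definition of $P_{\Ical_k}$, this gives the strict positivity $\|P_{\Ical_k}(\spgk)\|_2 = \|[\spgk]_{\Ical_k}\|_2 \geq \varphi\chipgk > 0$, which will be used repeatedly.

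To establish well-posedness, I would show that the while loop in Line~\ref{line:armijo-pg} of Algorithm~\ref{alg:ls-beta} exits after finitely many iterations. Applying Lemma~\ref{lem:dd} with $\xbar = x_k$, $\alphabar = \alpha_k$, and $\Ical = \Ical_k$ (a union of groups by Line~\ref{line:Ik-beta}) gives $D_{f+r}(x_k; P_{\Ical_k}(\spgk)) \leq -\tfrac{1}{\alpha_k}\|P_{\Ical_k}(\spgk)\|_2^2$. Since the directional derivative of $f+r$ exists (as $f$ is smooth and each group-norm term of $r$ is convex), the difference quotient $t \mapsto [(f+r)(x_k + t P_{\Ical_k}(\spgk)) - (f+r)(x_k)]/t$ converges to this derivative as $t \searrow 0$. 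Choosing the slack $\epsilon = (1-\eta)\tfrac{1}{\alpha_k}\|P_{\Ical_k}(\spgk)\|_2^2 > 0$ and using $\eta \in (0,1)$, I would conclude that for all sufficiently small $t = \xi^j$ the quotient drops below $-\eta\tfrac{1}{\alpha_k}\|P_{\Ical_k}(\spgk)\|_2^2$, i.e.\ the loop test in Line~\ref{line:armijo-pg} is violated; hence the loop terminates at some finite $j$ and returns $x_{k+1}$ and $\flagpgk$. The flag then encodes the index at exit: Lines~\ref{line:update-pg-if}--\ref{line:update2b} return $\samealpha$ precisely when $j = 0$ and $\decalpha$ when $j \geq 1$, so $k \in \setpgSAME$ iff $j=0$ and $k \in \setpgDEC$ iff $j > 0$, with the matching updates to $\alpha_{k+1}$ supplied by Lines~\ref{line:alpha-decrease} and~\ref{line:alpha-same} of Algorithm~\ref{alg:main}.

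For part (i), $k \in \setpgSAME$ means the loop exited at $j = 0$, so $x_{k+1} = x_k + P_{\Ical_k}(\spgk)$ and the violated test yields $(f+r)(x_{k+1}) \leq (f+r)(x_k) - \eta\tfrac{1}{\alpha_k}\|P_{\Ical_k}(\spgk)\|_2^2$; substituting $\|P_{\Ical_k}(\spgk)\|_2 \geq \varphi\chipgk$ gives~\eqref{result:dec-pg}, while $\alpha_{k+1} = \alpha_k$ comes from Line~\ref{line:alpha-same}. For part (ii), $k \in \setpgDEC$ means the loop exited at some $j \geq 1$, so the violated test gives $(f+r)(x_{k+1}) \leq (f+r)(x_k) - \eta\xi^j\tfrac{1}{\alpha_k}\|P_{\Ical_k}(\spgk)\|_2^2 < (f+r)(x_k)$, with strictness guaranteed by $\eta,\xi > 0$ and $\|P_{\Ical_k}(\spgk)\|_2 > 0$, and $\alpha_{k+1} = \xi\alpha_k$ from Line~\ref{line:alpha-decrease}.

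The main obstacle is the finite termination argument. The subtlety is that scaling the PG step by $\xi^j$ does not coincide with recomputing a PG step at the smaller parameter $\xi^j\alpha_k$, so Lemma~\ref{lem:decrease-known} cannot be invoked directly at the trial points $y_j$; termination must instead be driven by the directional-derivative bound of Lemma~\ref{lem:dd} together with the strict slack $1 - \eta > 0$. Everything after termination is bookkeeping: the two decrease inequalities are obtained by reading off the loop-exit condition and inserting the lower bound on $\|P_{\Ical_k}(\spgk)\|_2$ from the selection rule of Line~\ref{line:Ik-beta}.
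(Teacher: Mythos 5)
Your proposal is correct and follows essentially the same route as the paper's proof: establish $\|P_{\Ical_k}(\spgk)\|_2 \geq \varphi\chipgk > 0$ from Line~\ref{line:Ik-beta}, invoke Lemma~\ref{lem:dd} for the directional-derivative bound, prove finite termination of the backtracking loop, and read off parts (i) and (ii) from the loop-exit inequality together with Lines~\ref{line:alpha-decrease}/\ref{line:alpha-same}. The only difference is cosmetic: where the paper appeals to ``standard results for a backtracking Armijo line search,'' you spell out that argument explicitly via the difference quotient and the slack $(1-\eta)\tfrac{1}{\alpha_k}\|P_{\Ical_k}(\spgk)\|_2^2$, which is a valid (and self-contained) filling-in of the same step.
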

\begin{proof}
Since $k\in\setpg$, we know that the condition tested in Line~\ref{line:main.if} of Algorithm~\ref{alg:main} must not hold, meaning that $\chipgk > \chicgk$. Combining this observation with Line~\ref{line:Ik-beta} of Algorithm~\ref{alg:main} shows that the set $\Ical_k$ defined in Line~\ref{line:Ik-beta} satisfies
\begin{equation}\label{eq:P-bd}
\|P_{\Ical_k}(s_k)\|_2
= \|[s_k]_{\Ical_k}\|_2
\geq \varphi \chipgk > 0.
\end{equation}
Combining this result with Lemma~\ref{lem:dd} (using $\Ical = \Ik$, $\xbar = x_k$, and $\alphabar = \alpha_k$) yields
\begin{equation}\label{eq:sd-pg}
D_{f+r}(x_k; P_{\Ical_k}(s_k)) \leq -\tfrac{1}{\alpha_k}\|P_{\Ical_k}(s_k)\|_2^2 < 0.
\end{equation}

It is possible that Algorithm~\ref{alg:ls-beta} terminates in Line~\ref{line:update2} because the inequality in Line~\ref{line:armijo-pg} does not hold for $j = 0$. In this case, Algorithm~\ref{alg:ls-beta} successfully returns $x_{k+1} = y_0 = x_k + P_{\Ical_k}(s_k)$ and $\flagpgk = \samealpha$, also indicating that $k\in\setpgSAME$.  Since the while-loop in Line~\ref{line:armijo-pg} terminates with $j = 0$, we can conclude that
\begin{equation}\label{dec-pg-1}
f(x_{k+1}) + r(x_{k+1})
\equiv f(y_0) + r(y_0)
\leq f(x_k) + r(x_k) - \tfrac{\eta}{\alpha_k}\|P_{\Ical_k}(\spgk)\|_2^2.
\end{equation}
Combining this inequality with~\eqref{eq:P-bd} shows that \eqref{result:dec-pg} holds. Finally, since  $\flagpgk = \samealpha$, it follows from Line~\ref{line:alpha-same} that $\alpha_{k+1} = \alpha_k$, completing the proof in this case.

It remains to consider the case when Algorithm~\ref{alg:ls-beta} is unable to terminate in Line~\ref{line:update2} because the inequality in Line~\ref{line:armijo-pg} holds for $j = 0$.  In this case, it follows from \eqref{eq:sd-pg} and standard results for a backtracking Armijo line search that, for all sufficiently large $j$, the vector $y_j \gets x_k + \xi^j P_{\Ical_k}(s_k)$ defined in Line~\ref{line:y-up-pg} of Algorithm~\ref{alg:ls-beta} satisfies
\begin{equation}\label{dec-nsuff}
\begin{aligned}
f(y_j) + r(y_j) 
&\leq f(x_k) + r(x_k) + \eta\xi^j D_{f+r}(x_k; P_{\Ical_k}(s_k)) \\
&\leq f(x_k) + r(x_k) - \eta\xi^j \tfrac{1}{\alpha_k}\|P_{\Ical_k}(\spgk)\|_2^2.
\end{aligned}
\end{equation}
This inequality shows that the while loop starting in Line~\ref{line:armijo-pg} of Algorithm~\ref{alg:ls-beta} will terminate finitely, and thus Algorithm~\ref{alg:ls-beta} successfully returns $x_{k+1} = y_j = x_k + \xi^j P_{\Ical_k}(s_k)$ for some $j > 0$ and $\flagpgk = \decalpha$, also indicating that $k\in\setpgDEC$. Combining~\eqref{dec-nsuff}, $y_j = x_{k+1}$, and \eqref{eq:sd-pg} proves that $f(x_{k+1}) + r(x_{k+1}) < f(x_k) + r(x_k)$, as claimed. Finally, since $\flagpgk = \decalpha$, we see in Line~\ref{line:alpha-decrease} that $\alpha_{k+1} = \xi\alpha_k$.
\end{proof}

Next, we prove that the PG parameter remains bounded away from zero.

\begin{lemma} \label{lem:alpha-fixed}
The PG parameter sequence generated by Algorithm~\ref{alg:main} satisfies 
\begin{equation}\label{def:alphamin}
1 \geq \alpha_k 
\geq \alpha_{\min} 
:= \min\big\{\alpha_0,\tfrac{2\xi(1-\eta)}{L}\big\} > 0 
\ \ \text{for all $k\in\N{}$.}
\end{equation}
Moreover, a bound on the number of times the PG parameter is decreased is given by
\begin{equation}\label{bound-Kalphadecr}
|\setpgDEC|
\leq 
c^\alpha_{\downarrow} 
:= \max\left\{0, \left\lceil \log\Big(\tfrac{\alpha_0 L}{2(1-\eta)}\Big)
/ \log(\xi^{-1})\right\rceil \right\}. 
\end{equation}
\end{lemma}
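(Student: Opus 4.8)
The plan is to track how $\alpha_k$ evolves and to pin down exactly when it can decrease. First I would record the elementary structure of the PG-parameter recursion: by Line~\ref{line:alpha0} we have $\alpha_0\in(0,1]$, and at every iteration $\alpha_{k+1}$ either equals $\alpha_k$ (the $\setcg$ branch and Line~\ref{line:alpha-same}) or equals $\xi\alpha_k$ with $\xi\in(0,1)$ (the case $k\in\setpgDEC$, per Lemma~\ref{lem:K-pg}(ii)). Hence $\{\alpha_k\}$ is nonincreasing, which immediately gives the upper bound $\alpha_k\le\alpha_0\le 1$ for all $k$, and shows that a decrease occurs precisely at indices $k\in\setpgDEC$.

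The crux is a lower bound on $\alpha_k$ at any index where a decrease is triggered, and I would argue it by contraposition: if $\alpha_k\le \tfrac{2(1-\eta)}{L}$, then $k\notin\setpgDEC$. Indeed, since $\eta\in(0,1)$ this bound forces $\alpha_k<2/L$, so Lemma~\ref{lem:decrease-known} (with $\xbar=x_k$, $\alphabar=\alpha_k$, $\Ical=\Ical_k$) applies and guarantees that the trial point $y_0=x_k+P_{\Ical_k}(\spgk)$ achieves a decrease of at least $(\tfrac{1}{\alpha_k}-\tfrac{L}{2})\|P_{\Ical_k}(\spgk)\|_2^2$. The elementary equivalence $\tfrac{1}{\alpha_k}-\tfrac{L}{2}\ge\tfrac{\eta}{\alpha_k}\iff\alpha_k\le\tfrac{2(1-\eta)}{L}$ then shows $y_0$ already satisfies the Armijo test in Line~\ref{line:armijo-pg}, so the while loop of Algorithm~\ref{alg:ls-beta} terminates with $j=0$, giving $\flagpgk=\samealpha$ and $k\in\setpgSAME$. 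Taking the contrapositive, any $k\in\setpgDEC$ has $\alpha_k>\tfrac{2(1-\eta)}{L}$, hence $\alpha_{k+1}=\xi\alpha_k>\tfrac{2\xi(1-\eta)}{L}$. A one-line induction on $k$ then yields $\alpha_k\ge\alpha_{\min}$ for all $k$: the base case is $\alpha_0\ge\alpha_{\min}$, and in the inductive step the value is either left unchanged or, when it decreases, bounded below by $\tfrac{2\xi(1-\eta)}{L}\ge\alpha_{\min}$.

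Finally, to bound $|\setpgDEC|$ I would count decreases directly. Since each decrease multiplies $\alpha$ by $\xi$ and no step ever increases it, the value just before the $m$-th decrease equals $\xi^{m-1}\alpha_0$, and by the contrapositive above this decrease can occur only if $\xi^{m-1}\alpha_0>\tfrac{2(1-\eta)}{L}$. Writing $N=|\setpgDEC|$ and applying this to the last decrease gives $\xi^{N-1}\alpha_0>\tfrac{2(1-\eta)}{L}$; taking logarithms and dividing by $\log\xi<0$ (which reverses the inequality) produces $N-1<\log\!\big(\tfrac{\alpha_0 L}{2(1-\eta)}\big)/\log(\xi^{-1})$, whence $N\le\lceil\log(\tfrac{\alpha_0 L}{2(1-\eta)})/\log(\xi^{-1})\rceil$, and combining with $N\ge0$ gives the stated $c^\alpha_{\downarrow}$. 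The step I expect to require the most care is the sign bookkeeping in this logarithmic estimate together with the passage from the strict bound $N-1<(\cdot)$ to the ceiling, which should be verified both when the argument is an integer and when it is not; the outer $\max\{0,\cdot\}$ is needed precisely to cover the case $\alpha_0\le\tfrac{2(1-\eta)}{L}$, in which no decrease ever occurs.
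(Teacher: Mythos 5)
Your proposal is correct and follows essentially the same route as the paper's proof: both use Lemma~\ref{lem:decrease-known} (noting $\alpha_k \leq 2(1-\eta)/L < 2/L$ since $\eta \in (0,1)$) to show that once $\alpha_k \leq 2(1-\eta)/L$ the Armijo test in Line~\ref{line:armijo-pg} passes at $j=0$, forcing $\flagpgk = \samealpha$ and hence no further decrease, from which the lower bound $\min\{\alpha_0, 2\xi(1-\eta)/L\}$ and the geometric counting bound on $|\setpgDEC|$ both follow. Your contrapositive phrasing and the explicit care with the ceiling/integer bookkeeping are minor stylistic refinements of the same argument.
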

\begin{proof}
We first prove~\eqref{def:alphamin}. Since $\alpha_0 \in (0,1]$ in Line~\ref{line:alpha0} and $\alpha_{k+1} \leq \alpha_k$ for all $k\in\N{}$, we need only prove the lower bound on $\alpha_k$ in~\eqref{def:alphamin}. With that goal in mind, for the purpose of obtaining a contradiction, suppose that there exists an iteration $k$ satisfying $\alpha_k \leq 2(1-\eta)/L < 2/L$, with the latter inequality holding since $\eta\in(0,1)$. 

First suppose that $k\in\setpg$.  With $y_0 = x_k + P_{\Ical_k}(s_k)$ as defined in Line~\ref{line:y0-pg} of Algorithm~\ref{alg:ls-beta}, it follows from Lemma~\ref{lem:decrease-known} with $\xbar = x_k$, $\alphabar = \alpha_k$, and $s(\xbar,\alphabar) = s_k$ that
\begin{align*}
f(y_0) + r(y_0) 
&\leq f(x_k) + r(x_k) - (\tfrac{1}{\alpha_k}-\tfrac{L}{2})\|P_{\Ical}(s_k)\|_2^2 \\
&\leq f(x_k) + r(x_k) - \big(\tfrac{1}{\alpha_k}-\tfrac{2(1-\eta)}{2\alpha_k}\big) \|P_{\Ical}(s_k)\|_2^2 \\
&= f(x_k) + r(x_k) - \tfrac{\eta}{\alpha_k}\|P_{\Ical}(s_k)\|_2^2.
\end{align*}
This inequality implies that the condition checked in Line~\ref{line:armijo-pg} for $j = 0$ will not hold, meaning that $j = 0$ when Line~\ref{line:update-pg-if} is reached so that $\flagpgk \gets \samealpha$ in Line~\ref{line:update2}. Thus, when Line~\ref{line:main-alpha-if} in Algorithm~\ref{alg:main} is reached, the update $\alpha_{k+1} \gets \alpha_k$ will take place.  Second, if $k\in\setcg$, then Algorithm~\ref{alg:main} sets $\alpha_{k+1} \gets \alpha_k$. To summarize, anytime $\alpha_k \leq 2(1-\eta)/L$, the update $\alpha_{k+1} \gets \alpha_k$ takes place. Combining this property with the fact that when the PG parameter is decreased the update $\alpha_{k+1} \gets \xi\alpha_k$ is used (see Line~\ref{line:alpha-decrease} in Algorithm~\ref{alg:main}), shows that~\eqref{def:alphamin} holds.

We now prove~\eqref{bound-Kalphadecr}. Let us observe from the first paragraph in  this proof that if $\alpha_0 \leq 2(1-\eta)/L$ then $|\setpgDEC| = 0$, which verifies that~\eqref{bound-Kalphadecr} holds. Therefore, for the remainder of the proof, suppose that $\alpha_0 > 2(1-\eta)/L$. Combining this bound with the fact that when the PG parameter is decreased the update $\alpha_{k+1} \gets \xi\alpha_k$ is used, we can see that an upper bound on $|\setpgDEC|$ is the smallest integer $\ell$ such that $\alpha_0\xi^\ell \leq 2(1-\eta)/L$.  Solving this inequality for $\ell$ shows that the result in~\eqref{bound-Kalphadecr} holds.
\end{proof}

We now switch our attention to iterations in $\setcg$. The next result establishes that Algorithm~\ref{alg:cg-step} is well posed, and that the direction $d_k$ that results from it when called by Algorithm~\ref{alg:main} satisfies a certain descent property.

\begin{lemma}\label{lem:is-descent}
For each $k\in\setcg$, Algorithm~\ref{alg:cg-step} is well posed.  Moreover, the  resulting direction $\dbar_k$, which is used to compute $d_k$ in Line~\ref{line:d-phi}, guarantees that $d_k$ satisfies
\begin{itemize}
\item[(i)] $\nabla_{\Ical_k}(f + r)(x_k)^T [d_k]_{\Ical_k}
\leq -\tfrac{1}{\mumax}\|\nabla_{\Ical_k}(f + r)(x_k)\|_2^2
< 0$, and
\item[(ii)] $\|d_k\|_2 \leq (2/\mumin)\|\nabla_\Ik (f + r)(x_k) \|_2$ 
\end{itemize}
where $\Ical_k\subseteq\Icgk$ is the set in Line~\ref{line:Ik-phi} used as an input to Algorithm~\ref{alg:cg-step} in Line~\ref{line:ls-cg}.
\end{lemma}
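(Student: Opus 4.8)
The plan is to reduce everything to statements about the reduced-space quantities $g_k = \nabla_{\Ical_k}(f+r)(x_k)$, $H_k$, and $\dbar_k$, since $[d_k]_{\Ical_k} = \dbar_k$ and $[d_k]_{\Ical_k^c} = 0$ (Line~\ref{line:d-phi}) give $\nabla_{\Ical_k}(f+r)(x_k)^T[d_k]_{\Ical_k} = g_k^T\dbar_k$ and $\|d_k\|_2 = \|\dbar_k\|_2$. The first and most important task is to show $g_k \neq 0$, since this is what makes the reference direction $d_k^R = -\beta_k g_k$ in Line~\ref{line:dR} well defined and delivers the strict inequality in~(i). Because $k\in\setcg$, the test in Line~\ref{line:main.if} held, so $\chipgk \leq \chicgk$; together with Assumption~\ref{ass.nonzero} this forces $\chicgk > 0$. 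By the defining property of $\Ical_k$ in Line~\ref{line:Ik-phi}, $\|[\spgk]_{\Ical_k}\|_2 \geq \varphi\chicgk > 0$. Now for every group $\Gcal_i\subseteq\Ical_k\subseteq\Icgk\subseteq\Icgkbar$, the definition~\eqref{def:Icgkbar} guarantees $[x_k]_{\Gcal_i}\neq 0$ and $[x_k+s_k]_{\Gcal_i}\neq 0$, and Lemma~\ref{lem:alpha-fixed} gives $\alpha_k\in(0,1]$, so Lemma~\ref{lem:g>s} applies and yields $\|\nabla_{\Gcal_i}(f+r)(x_k)\|_2 \geq \|[\spgk]_{\Gcal_i}\|_2$. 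Squaring and summing over the groups comprising $\Ical_k$ then gives $\|g_k\|_2 \geq \|[\spgk]_{\Ical_k}\|_2 \geq \varphi\chicgk > 0$.

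Having $g_k\neq 0$, well-posedness follows: $g_k^T H_k g_k > 0$ by Assumption~\ref{ass:Hk}, so $\beta_k$ and $d_k^R$ are well defined, and I would exhibit one explicit admissible $\dbar_k$ to confirm feasibility of conditions~\eqref{line:dbar-1}--\eqref{line:dbar-3}. The exact minimizer $\dbar_k = -H_k^{-1}g_k$ is convenient: it makes~\eqref{line:dbar-3} hold with left-hand side $0$; it gives $m_k(\dbar_k) = -\tfrac12 g_k^T H_k^{-1} g_k \leq 0 = m_k(0)$, so~\eqref{line:dbar-2} holds; and~\eqref{line:dbar-1} reduces to the generalized Cauchy--Schwarz inequality $(g_k^T H_k^{-1}g_k)(g_k^T H_k g_k)\geq \|g_k\|_2^4$. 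This matches the text's remark that a sufficiently accurate CG iterate is admissible.

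For~(i), I would chain~\eqref{line:dbar-1} with the definition of $d_k^R$: $g_k^T\dbar_k \leq g_k^T d_k^R = -\beta_k\|g_k\|_2^2 = -\|g_k\|_2^4/(g_k^T H_k g_k)$, and then bound the denominator using $g_k^T H_k g_k \leq \mumax\|g_k\|_2^2$ from Assumption~\ref{ass:Hk} to obtain $-\|g_k\|_2^4/(g_k^T H_k g_k) \leq -\|g_k\|_2^2/\mumax < 0$. For~(ii), I would use~\eqref{line:dbar-2} in the form $\tfrac12\dbar_k^T H_k\dbar_k \leq -g_k^T\dbar_k \leq \|g_k\|_2\|\dbar_k\|_2$ (Cauchy--Schwarz), apply the lower bound $\tfrac{\mumin}{2}\|\dbar_k\|_2^2 \leq \tfrac12\dbar_k^T H_k\dbar_k$ from Assumption~\ref{ass:Hk}, and cancel one factor of $\|\dbar_k\|_2$ (the bound is trivial if $\dbar_k = 0$) to conclude $\|d_k\|_2 = \|\dbar_k\|_2 \leq (2/\mumin)\|g_k\|_2$.

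The only genuinely delicate step is establishing $g_k\neq 0$; everything else is a short algebraic manipulation of conditions~\eqref{line:dbar-1}--\eqref{line:dbar-2} together with the two-sided spectral bounds on $H_k$. The care needed there is in verifying that the hypotheses of Lemma~\ref{lem:g>s} hold for every group inside $\Ical_k$ (the nonzero conditions come from $\Ical_k\subseteq\Icgkbar$ and~\eqref{def:Icgkbar}, and the bound $\alpha_k\le 1$ comes from Lemma~\ref{lem:alpha-fixed}) and in passing correctly from the per-group inequalities to the aggregated bound $\|g_k\|_2\geq\|[\spgk]_{\Ical_k}\|_2$.
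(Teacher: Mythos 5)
Your proof is correct, and it follows the same overall skeleton as the paper's (establish $g_k \neq 0$ via Lemma~\ref{lem:g>s} and Assumption~\ref{ass.nonzero}, then chain~\eqref{line:dbar-1} with the definition of $d_k^R$ and the spectral bound $g_k^T H_k g_k \leq \mumax\|g_k\|_2^2$ for part~(i)), but it differs in three places worth noting. First, where the paper proves $g_k \neq 0$ by contradiction, you prove the stronger quantitative bound $\|g_k\|_2 \geq \|[\spgk]_{\Ical_k}\|_2 \geq \varphi\chicgk > 0$ directly by summing the squared per-group inequalities from Lemma~\ref{lem:g>s}; this is not only valid but is precisely the chain the paper reuses later inside Lemma~\ref{lem:dec-cg-wp} (Case~1), so your version makes that dependence explicit. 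Second, for well-posedness the paper defers to the discussion in Section~\ref{sec.alg-cgstep} (CG iterates eventually satisfy~\eqref{line:dbar-1}--\eqref{line:dbar-3}), whereas you exhibit the exact minimizer $\dbar_k = -H_k^{-1}g_k$ as an explicit admissible certificate, with~\eqref{line:dbar-1} reduced to the generalized Cauchy--Schwarz inequality $(g_k^T H_k^{-1}g_k)(g_k^T H_k g_k) \geq \|g_k\|_2^4$; this is a more self-contained justification. Third, and most substantively, the paper proves part~(ii) by citation to~\cite[Lemma~3.8]{chen2017reduced}, while you derive it in three lines from~\eqref{line:dbar-2}, Cauchy--Schwarz, and $\mumin\|\dbar_k\|_2^2 \leq \dbar_k^T H_k \dbar_k$ (handling $\dbar_k = 0$ trivially); your argument is elementary and removes the external dependency. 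The only point of care, which you handled correctly, is that applying Lemma~\ref{lem:g>s} per group requires $\alpha_k \in (0,1]$ and the nonvanishing conditions from~\eqref{def:Icgkbar}, both of which you sourced properly (the former from Lemma~\ref{lem:alpha-fixed}, though the monotone nonincrease of $\{\alpha_k\}$ from $\alpha_0 \in (0,1]$ would also suffice).
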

\begin{proof}
Since $k\in\setcg$, Algorithm~\ref{alg:cg-step} is called in Line~\ref{line:cg-direction} with input $\Ik$ defined in Line~\ref{line:Ik-phi}.  We first prove that $g_k = \nabla_{\Ical_k}(f + r)(x_k)$, as defined in Line~\ref{line:gH}, is nonzero.  For a proof by contradiction, suppose that $g_k = 0$ so that $\nabla_{\Gcali}(f + r)(x_k) = 0$ for all $i$ such that $\Gcali\subseteq\Ik$.  Consider arbitrary such $i$.  Note that $[x_k]_\Gcali \neq 0$ and $[x_k+s_k]_\Gcali \neq 0$ since $\Gcali\subseteq\Ik\subseteq\Icgk$ (see Line~\ref{line:Ik-phi}) and by how $\Icgk$ is defined. This allows us to conclude from Lemma~\ref{lem:g>s} that $[s_k]_\Gcali = 0$, i.e., that $[s_k]_\Ik = 0$ since $i$ with $\Gcali \subseteq \Ik$ was arbitrary.  This fact and Line~\ref{line:Ik-phi} yields $\chicgk = 0$, but since the inequality in Line~\ref{line:main.if} must hold, we also have $\chipgk = 0$.  This contradicts Assumption~\ref{ass.nonzero}, thus establishing that $g_k \neq 0$. Now, it follows from Lines~\ref{line:gH}, \ref{line:d-phi}, \ref{line:dbar}, and \ref{line:dR}, $g_k \neq 0$, and Assumption~\ref{ass:Hk} that
\begin{equation*} 
\begin{aligned}
\nabla_{\Ical_k}(f + r)(x_k)^T [d_k]_{\Ical_k}
&\equiv g_k^T\dbar_k 
\leq g_k^T d_k^R 
= -\beta_k\|g_k\|_2^2  \\
&= -\|g_k\|_2^4 / (g_k^T H_k g_k) 
\leq -\tfrac{1}{\mumax}\|g_k\|_2^2. 
\end{aligned}
\end{equation*}
The result in (i) follows from this inequality and $g_k = \nabla_{\Ical_k}(f + r)(x_k) \neq 0$.

Part (ii) is precisely~\cite[Lemma~3.8]{chen2017reduced} under our Assumption~\ref{ass:Hk} since our conditions placed upon the step $d_k$ are exactly the same as those used in~\cite{chen2017reduced}.
\end{proof}

The next lemma shows that, for $k\in\setcg$, a local Lipschitz property holds along a certain portion of the search path defined by the reduced-space Newton-CG direction.

\begin{lemma}\label{lem:cg-props}
Let $k\in\setcg$ so that $\Ical_k$ is computed in Line~\ref{line:Ik-phi}.
The following hold:
\begin{itemize}
\item[(i)] The constant $\theta\in(0,\pi/2)$ and index set $\Ik$ passed into Algorithm~\ref{alg:ls-cg} satisfy, 
for each $i$ such that $\Gcal_i\subseteq\Ical_k$ with $\rho_{k,i}$ computed in~\eqref{def:rhoki} and $\rhobar_{k,i}$ computed in Line~\ref{line:rhobarki}, the following conditions:
\begin{itemize}
\item[(a)] $\|[x_k+s_k]_\Gcali\|_2 \neq 0$, 
\item[(b)] $\|[x_k]_{\Gcal_i}\|_2 
\geq \rho_{k,i}
\geq \rhobar_{k,i} 
\geq \sin(\theta) \rho_{k,i} > 0$, and 
\item[(c)] $\|[x_k]_\Gcali\|_2 - \rhobar_{k,i} \geq \kappa_2(1-\sin(\theta))\|\nabla_\Ik (f+r)(x_k)\|_2^p$.
\end{itemize}
\item[(ii)] For all step sizes $\beta\in[0,\tau_k)$ with $\tau_k$ computed in Line~\ref{line:def-tauk}, it holds, with 
\begin{equation}\label{def:rhokmin}
\lammax := \max\{\lambda_1,\lambda_2,\dots, \lambda_{n_\Gcal}\}
\ \ \text{and} \ \
\rhokmin := \min_{i}\{\rho_{k,i}: \Gcal_i\subseteq\Ik\}
\end{equation}
that
$\|\nabla_\Ik (f+r)(x_k) - \nabla_\Ik (f+r)(x_k+\beta d_k)\|_2 \leq \beta\big(L + \tfrac{\lammax}{\rhokmin}\big)\|[d_k]_\Ik\|_2$.
\end{itemize}
\end{lemma}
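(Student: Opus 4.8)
I would dispatch the elementary set‑membership bounds first and reserve the real work for the local Lipschitz estimate in (ii). Since $k\in\setcg$ and each $\Gcali\subseteq\Ik\subseteq\Icgk\subseteq\Icgkbar$, claim (a) and the differentiability of $f+r$ on these blocks are immediate from the membership conditions in~\eqref{def:Icgkbar}. For (b), I would combine the two lower bounds that define the active set: the condition in~\eqref{def:Icgkbar} gives $\|[x_k]_\Gcali\|_2\ge\kappa_1\|\nabla_\Gcali(f+r)(x_k)\|_2$, while $\Gcali\subseteq\Icgk=\Icgkbar\setminus\Ismallk$ together with~\eqref{def:Iksmall} and the monotonicity $\|\nabla_\Icgk(f+r)(x_k)\|_2\le\|\nabla_\Icgkbar(f+r)(x_k)\|_2$ gives $\|[x_k]_\Gcali\|_2\ge\kappa_2\|\nabla_\Icgk(f+r)(x_k)\|_2^p$; taking the maximum yields $\|[x_k]_\Gcali\|_2\ge\rhoki$. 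The remaining chain in (b) follows from $\rhobarki=\min\{\rhoki,\sin(\theta)\|[x_k]_\Gcali\|_2\}\le\rhoki$ and, since $\|[x_k]_\Gcali\|_2\ge\rhoki$, from $\rhobarki\ge\min\{\rhoki,\sin(\theta)\rhoki\}=\sin(\theta)\rhoki$; strict positivity uses that Lemma~\ref{lem:is-descent}(i) forces $\|\nabla_\Ik(f+r)(x_k)\|_2>0$, whence $\|\nabla_\Icgk(f+r)(x_k)\|_2>0$ (as $\Ik\subseteq\Icgk$) and $\rhoki>0$. For (c) I would avoid a case split by using only $\rhobarki\le\sin(\theta)\|[x_k]_\Gcali\|_2$, so that $\|[x_k]_\Gcali\|_2-\rhobarki\ge(1-\sin(\theta))\|[x_k]_\Gcali\|_2\ge(1-\sin(\theta))\rhoki\ge\kappa_2(1-\sin(\theta))\|\nabla_\Ik(f+r)(x_k)\|_2^p$, the last step using $\rhoki\ge\kappa_2\|\nabla_\Icgk(f+r)(x_k)\|_2^p\ge\kappa_2\|\nabla_\Ik(f+r)(x_k)\|_2^p$.

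\textbf{Part (ii).} I would split $\nabla_\Ik(f+r)=\nabla_\Ik f+\nabla_\Ik r$ and bound the two pieces. For the smooth piece, Assumption~\ref{ass.first} gives $\|\nabla_\Ik f(x_k)-\nabla_\Ik f(x_k+\beta d_k)\|_2\le L\|\beta d_k\|_2=\beta L\|[d_k]_\Ik\|_2$, using $[d_k]_{\Ik^c}=0$ (Line~\ref{line:d-phi}) so that $\|d_k\|_2=\|[d_k]_\Ik\|_2$. For the regularizer, group separability gives $\nabla_\Gcali r(x)=\lambda_i[x]_\Gcali/\|[x]_\Gcali\|_2$ on each block with $[x]_\Gcali\ne0$, whose Hessian has spectral norm $\lambda_i/\|[x]_\Gcali\|_2$, and only blocks $\Gcali\subseteq\Ik$ move along $d_k$. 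The crucial quantitative step is to show the search segment stays uniformly away from zero on each active block: because $t\mapsto\|[x_k+td_k]_\Gcali\|_2^2$ is a convex quadratic equal to $\|[x_k]_\Gcali\|_2^2\ge\rhoki^2\ge\rhobarki^2$ at $t=0$, and because $\tau_{k,i}$ is the smallest positive root of $\|[x_k+\tau d_k]_\Gcali\|_2=\rhobarki$, one has $\|[x_k+td_k]_\Gcali\|_2\ge\rhobarki$ for every $t\in[0,\tau_{k,i})$; since $\beta<\tau_k\le\tau_{k,i}$, this holds on all of $[0,\beta]$.

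Combining this with (i)(b) yields $\|[x_k+td_k]_\Gcali\|_2\ge\rhobarki\ge\sin(\theta)\rhoki\ge\sin(\theta)\rhokmin>0$ for $t\in[0,\beta]$, so along the segment the Hessian of $\lambda_i\|\cdot\|_2$ has spectral norm at most $\lambda_i/(\sin(\theta)\rhokmin)\le\lammax/(\sin(\theta)\rhokmin)$. Integrating this bound over the segment gives, for each active block, $\|\nabla_\Gcali r(x_k)-\nabla_\Gcali r(x_k+\beta d_k)\|_2\le(\lammax/(\sin(\theta)\rhokmin))\,\beta\|[d_k]_\Gcali\|_2$; squaring, summing over the disjoint blocks $\Gcali\subseteq\Ik$, and using $\sum_{\Gcali\subseteq\Ik}\|[d_k]_\Gcali\|_2^2=\|[d_k]_\Ik\|_2^2$ yields $\|\nabla_\Ik r(x_k)-\nabla_\Ik r(x_k+\beta d_k)\|_2\le(\lammax/(\sin(\theta)\rhokmin))\,\beta\|[d_k]_\Ik\|_2$. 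Adding the smooth and regularizer estimates produces a bound of the stated form, with the tangent‑cone factor $\sin(\theta)$ entering the denominator of the regularizer term, in line with the shrinkage $\rhobarki\ge\sin(\theta)\rhoki$ from (i)(b).

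I expect the main obstacle to be the uniform away‑from‑zero estimate along the \emph{entire} segment $[0,\beta]$ rather than merely at its endpoints: this is exactly where the convexity of the squared block norm and the precise meaning of $\tau_{k,i}$ as the first crossing of the shrunken radius $\rhobarki$ are indispensable, since the path may descend toward the ball before $\tau_{k,i}$. A secondary point is assembling the per‑block Lipschitz estimates into a single constant, which works cleanly only because distinct groups occupy orthogonal coordinate blocks and $d_k$ vanishes outside $\Ik$.
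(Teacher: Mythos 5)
Your part (i) follows the paper's argument essentially step for step: same use of \eqref{def:Icgkbar}, \eqref{def:Iksmall}, and \eqref{def:rhoki} for (a), the chain in (b), and the same chain of inequalities for (c). The one divergence is the positivity claim $\rhoki>0$: the paper proves it by a self-contained contradiction (Lemma~\ref{lem:g>s} plus Line~\ref{line:Ik-phi} plus Assumption~\ref{ass.nonzero}), whereas you import $\nabla_{\Ik}(f+r)(x_k)\neq 0$ from Lemma~\ref{lem:is-descent}(i). Since that lemma precedes this one and does not depend on it, your shortcut is legitimate, and you are also more explicit than the paper about why $\|[x_k]_\Gcali\|_2\geq\rhoki$ (the monotonicity $\|\nabla_{\Icgk}(f+r)(x_k)\|_2\leq\|\nabla_{\Icgkbar}(f+r)(x_k)\|_2$, which the paper leaves implicit).

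Part (ii) is where you genuinely diverge. The paper never differentiates $r$ twice: it notes that the two endpoints $[x_k]_\Gcali$ and $[x_k+\beta d_k]_\Gcali$ lie outside the ball of radius $\rhobarki$, rewrites the difference of block gradients as $(\lambda_i/\rhobarki)$ times a difference of projections onto that ball, and uses nonexpansiveness of the projection to get the per-block bound $(\lambda_i\beta/\rhobarki)\|[d_k]_\Gcali\|_2$ in \eqref{r-grad-diff}. You instead prove the whole segment stays outside the ball (your first-crossing argument is the fact the paper gestures at via Line~\ref{line:def-tauk}) and integrate the Hessian bound $\lambda_i/\|y\|_2$ along it. Both give valid per-block Lipschitz estimates; the paper's needs only the endpoints, yours needs the segment-wide lower bound, which you supply.

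The substantive issue is the final constant: you obtain $\beta\bigl(L+\lammax/(\sin(\theta)\rhokmin)\bigr)\|[d_k]_\Ik\|_2$, which is \emph{weaker} than the stated bound $\beta\bigl(L+\lammax/\rhokmin\bigr)\|[d_k]_\Ik\|_2$, so strictly you have not proved the lemma as written. You should know, however, that the paper's own proof has the same problem, hidden in the second inequality of \eqref{bd:Lip-r}: passing from $\lambda_i/\rhobarki$ to $\lammax/\rhokmin$ requires $\rhobarki\geq\rhokmin$, but part (i)(b) only guarantees $\rhobarki\geq\sin(\theta)\rhoki\geq\sin(\theta)\rhokmin$, and the gap is real --- e.g.\ if $\|[x_k]_\Gcali\|_2=\rhoki=\rhokmin$ then $\rhobarki=\sin(\theta)\rhokmin<\rhokmin$. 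So the constant that both arguments actually deliver is yours; the clean repair is either to carry the $1/\sin(\theta)$ factor or to replace $\rhokmin$ by $\min_i\{\rhobarki:\Gcali\subseteq\Ik\}$ in the statement. Either fix only rescales $c_2$ in \eqref{def:cs} and $c_{\mathrm{small}}$ in \eqref{def:cbig}; every order in Theorem~\ref{thm:complexity} is unaffected.
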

\begin{proof}
We first prove part (i). Consider arbitrary $i$ with $\Gcal_i \subseteq\Ical_k$, where $\Ical_k\subseteq\Icgk$ is passed into Algorithm~\ref{alg:ls-cg} and constructed to satisfy the condition in Line~\ref{line:Ik-phi}. Part (a) follows from $\Icgk\subseteq\Icgkbar$ and the definition of $\Icgkbar$ in~\eqref{def:Icgkbar}. The first inequality in part (b) follows from $\Icgk\subseteq\Icgkbar$, and how $\Icgk$, $\Ismallk$, and $\Icgkbar$ are defined. The second inequality in (b) follows from how $\rhobar_{k,i}$ is defined in Line~\ref{line:rhobarki}. The third inequality in (b) follows from Line~\ref{line:rhobarki} and the first inequality in (b). To complete the proof for part (b), we must prove that $\rho_{k,i} > 0$. For a proof by contradiction, assume that $\rho_{k,i} = 0$, which by~\eqref{def:rhoki} means that $\|\nabla_{\Icgk} (f+r)(x_k)\|_2 = 0$.  It follows from this fact that each $i$ with $\Gcali \subseteq\Ik\subseteq\Icgk$  satisfies $\|\nabla_{\Gcali} (f+r)(x_k)\|_2 = 0$, which in light of Lemma~\ref{lem:g>s} (using $\xbar = x_k$, $\alphabar = \alpha_k$, and $s(\xbar,\alphabar) = s_k$) and the definition of $\Icgk$ implies that $\|[s_k]_\Gcali\|_2 = 0$ for each $\Gcali \subseteq\Ik$, i.e., that $\|[s_k]_\Ik\|_2 = 0$. It now follows from Line~\ref{line:Ik-phi} that $\chicgk = 0$, which combined with the inequality in Line~\ref{line:main.if} shows that $\chipgk$ = 0.  Since we have reached a contradiction to Assumption~\ref{ass.nonzero}, we must conclude that $\rho_{k,i} > 0$, as claimed. Finally, we aim to prove part (c).  It follows from
Line~\ref{line:rhobarki}, $\theta \in(0,\pi/2)$,  part (b), \eqref{def:rhoki}, and the fact that $\Ik \subseteq \Icgk$ that 
\begin{align*}
\|[x_k]_\Gcali\|_2 - \rhobar_{k,i}
&\geq \|[x_k]_\Gcali\|_2 - \sin(\theta)\|[x_k]_\Gcali\|_2
=(1-\sin(\theta))\|[x_k]_\Gcali\|_2 \\
&\geq (1-\sin(\theta))\rhoki
\geq \kappa_2(1-\sin(\theta))\|\nabla_{\Icgk}(f+r)(x_k)\|_2^p \\
&\geq \kappa_2(1-\sin(\theta))\|\nabla_{\Ik}(f+r)(x_k)\|_2^p,
\end{align*}
which completes the proof of part (c).

To prove part (ii), let $\beta\in[0,\tau_k)$.   It follows from part (i) 
and the definition of $\tau_k$ in Line~\ref{line:def-tauk} that every point on the segment that connects $[x_k]_\Gcali$ to $[x_k+\beta d_k]_\Gcali$ is outside of  the ball in $\R{|\Gcali|}$ centered at zero of radius $\rhobar_{k,i} > 0$.  This means that both $\|[x_k]_{\Gcali}\| \geq \rhobar_{k,i}$ and $\|[x_k+\beta d_k]_{\Gcali}\| \geq \rhobar_{k,i}$. It now follows that 
\begin{equation}\label{r-grad-diff}
\begin{aligned}
&\ \|\nabla_\Gcali r(x_k) - \nabla_\Gcali r(x_k+\beta d_k)\|_2 \\
&= \lambda_i \left\|\frac{[x_k]_\Gcali}{\|[x_k]_\Gcali\|_2} - \frac{[x_k+\beta d_k]_\Gcali}{\|[x_k+\beta d_k]_\Gcali\|_2}\right\|_2 
= \frac{\lambda_i}{\rhobar_{k,i}} \left\|\frac{\rhobar_{k,i}[x_k]_\Gcali}{\|[x_k]_\Gcali\|_2} - \frac{\rhobar_{k,i}[x_k+\beta d_k]_\Gcali}{\|[x_k+\beta d_k]_\Gcali\|_2}\right\|_2 \\
&\leq \frac{\lambda_i}{\rhobar_{k,i}} \|[x_k]_\Gcali - [x_k+\beta d_k]_\Gcali\|_2 
= \frac{\lambda_i\beta}{\rhobar_{k,i}} \|[d_k]_\Gcali\|_2,
\end{aligned}
\end{equation}
where the (only) inequality follows from the nonexpansive property of the projection (of $[x_k]_\Gcali$ and $[x_k+ \beta d_k]_\Gcali$) onto the ball of radius $\rhobar_{k,i}$.  From \eqref{r-grad-diff} we have
\begin{align}
&\ \|\nabla_\Ik r(x_k) - \nabla_\Ik r(x_k+\beta d_k) \|_2^2 \nonumber \\
&= \sum_{i:\Gcali\subseteq\Ik} \|\nabla_\Gcali r(x_k) - \nabla_\Gcali r(x_k+\beta d_k)\|_2^2  
\leq  \beta^2\sum_{i:\Gcali\subseteq\Ik}  \frac{\lambda_i^2}{\rhobar_{k,i}^2} \|[d_k]_\Gcali\|_2^2 \nonumber \\
&\leq  \frac{\beta^2 \lammax^2}{\rhokmin^2}\sum_{i:\Gcali\subseteq\Ik} \|[d_k]_\Gcali\|_2^2 
=  \frac{\beta^2 \lammax^2}{\rhokmin^2} \|[d_k]_\Ik\|_2^2.
 \label{bd:Lip-r}
\end{align}
It follows from Assumption~\ref{ass.first}, $[d_k]_{\Ical_k^c} = 0$, the triangle inequality, and~\eqref{bd:Lip-r} that 
\begin{align*}
&\ \|\nabla_\Ik (f+r)(x_k) - \nabla_\Ik (f+r)(x_k+\beta d_k)\|_2  \\
&\leq \| \nabla_\Ik f(x_k) - \nabla_\Ik f(x_k+\beta d_k)\|_2 + \|\nabla_\Ik r(x_k) - \nabla_\Ik r(x_k+\beta d_k) \|_2 \\
&\leq L\beta\|d_k\|_2 + \(\beta\tfrac{\lammax}{\rhokmin}\)\|[d_k]_\Ik\|_2 
=  \beta\(L + \tfrac{\lammax}{\rhokmin}\)\|[d_k]_\Ik\|_2,
\end{align*}
which completes the proof.
\end{proof}

We now show that Algorithm~\ref{alg:ls-beta} is well posed and that the new iterate it  produces satisfies a decrease property that will be used in the final complexity result.

\begin{lemma}\label{lem:dec-cg-wp}
For each $k\in\setcg$, Algorithm~\ref{alg:ls-cg} is called in Line~\ref{line:ls-cg} and successfully returns $x_{k+1}$ and $\flagcgk$.  Moreover, the value of $\flagcgk$ indicates whether $k\in \setcgZERO$ or $k\in\setcgSD$, and for these respective cases the following properties hold:
\begin{itemize}
\item[(i)] If $k\in\setcgZERO$, then  $f(x_{k+1})+r(x_{k+1}) \leq f(x_k) + r(x_k)$ and $x_{k+1}$ has at least one additional block of zeros compared to $x_k$.
\item[(ii)] If $k\in\setcgSD$, then 
\begin{equation}\label{result:dec-cg}
f(x_{k+1}) + r(x_{k+1})
\leq f(x_k) + r(x_k) - \min\{c_1(\chicgk)^{1+p},c_2(\chicgk)^{2+p}\}
\end{equation}
where 
\begin{equation}\label{def:cs}
\begin{aligned}
c_1 &:= 
\frac{\eta\xi\mumin\kappa_2\big(1-\sin(\theta)\big)\varphi^{1+p}}{2\mumax} > 0
\ \ \text{and} \\
c_2 &:= \frac{\kappa_2\mumin^2 \xi\eta(1-\eta)\varphi^{2+p}}{2\mumax^2\big(L \kappa_2 (L_f+\lammax\sqrt{n_\Gcal})^p + \lammax\big)} > 0.
\end{aligned}
\end{equation}
\end{itemize}
\end{lemma}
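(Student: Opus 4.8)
The plan is to split the argument according to where Algorithm~\ref{alg:ls-cg} returns, since that is exactly what distinguishes $k\in\setcgZERO$ from $k\in\setcgSD$. First I would dispose of well-posedness. The while loop in Line~\ref{line:update-cg-while} runs only while $\xi^j\geq\tau_k$, and because each $\tau_{k,i}$ is a positive root (or $+\infty$) we have $\tau_k>0$; since $\xi\in(0,1)$ forces $\xi^j\to 0$, this loop exits after finitely many iterations. If it returns in Line~\ref{line:ls-phi-nz-return} we are in case~(i); otherwise control reaches the loop in Line~\ref{line:update-cg-loop} at the first index $j_0$ with $\xi^{j_0}<\tau_k$, and showing that this loop also terminates delivers case~(ii).

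Case~(i) is then immediate. The test in Line~\ref{line:ls-phi-nz} that triggers the return gives $f(x_{k+1})+r(x_{k+1})\leq f(x_k)+r(x_k)$, while the while condition $\xi^j\geq\tau_k=\min_i\{\tau_{k,i}:\Gcal_i\subseteq\Ik\}$ guarantees some block index $i$ with $\xi^j\geq\tau_{k,i}$, for which Line~\ref{line:project} sets $[x_{k+1}]_{\Gcal_i}=0$. By Lemma~\ref{lem:cg-props}(i)(b) we have $\|[x_k]_{\Gcal_i}\|_2\geq\rhobar_{k,i}>0$, so this block was nonzero at $x_k$ and therefore $x_{k+1}$ has at least one additional block of zeros.

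For case~(ii) I would write $\psi(\beta):=(f+r)(x_k+\beta d_k)$, so that $\psi'(0)=\nabla_{\Ik}(f+r)(x_k)^T[d_k]_{\Ik}\leq-g^2/\mumax<0$ by Lemma~\ref{lem:is-descent}(i), where $g:=\|\nabla_{\Ik}(f+r)(x_k)\|_2$. On $[0,\tau_k)$ the gradient of $f+r$ is Lipschitz along $d_k$ with constant $\tilde L:=L+\lammax/\rhokmin$ by Lemma~\ref{lem:cg-props}(ii), so integrating the directional derivative yields $\psi(\beta)\leq\psi(0)+\beta\psi'(0)+\tfrac{\tilde L}{2}\beta^2\|[d_k]_{\Ik}\|_2^2$ for $\beta\in[0,\tau_k)$. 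Comparing with the Armijo test in Line~\ref{line:ls-phi-SD} shows the test holds whenever $\beta<\tau_k$ and $\beta\leq\bar\beta:=2(1-\eta)(-\psi'(0))/(\tilde L\|[d_k]_{\Ik}\|_2^2)$, which proves the loop terminates at some index $j^*$. A backtracking argument then bounds the accepted step below: either a previous loop trial failed the test, forcing $\xi^{j^*-1}>\bar\beta$ and hence $\xi^{j^*}>\xi\bar\beta$, or the loop terminates at its first trial, in which case $\xi^{j_0}\geq\xi\tau_k$ whenever the while loop executed (the immediate-acceptance case $\tau_k>1$, $j_0=0$ is treated separately and only improves the decrease). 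Consequently the decrease obeys $f(x_k)+r(x_k)-[f(x_{k+1})+r(x_{k+1})]\geq-\eta\xi^{j^*}\psi'(0)\geq\eta\xi\min\{\tau_k,\bar\beta\}\,(g^2/\mumax)$.

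It then remains to convert the two thresholds into the two powers of $\chicgk$ in~\eqref{result:dec-cg}. For the $\bar\beta$ branch, Lemma~\ref{lem:is-descent}(ii) gives $\|[d_k]_{\Ik}\|_2\leq 2g/\mumin$, whence $\bar\beta\geq(1-\eta)\mumin^2/(2\mumax\tilde L)$; bounding $\tilde L$ above via $\rhokmin\geq\kappa_2 g^p$ (from~\eqref{def:rhoki} and $\Ik\subseteq\Icgk$), combined with the lower bound $g\geq\varphi\chicgk$ (sum Lemma~\ref{lem:g>s} over the blocks of $\Ik$ and use Line~\ref{line:Ik-phi}) and the upper bound $g\leq L_f+\lammax\sqrt{n_\Gcal}$ (from Assumption~\ref{ass.first} and $\|\nabla_{\Gcal_i}r(x_k)\|_2=\lambda_i$), yields the term $c_2(\chicgk)^{2+p}$ with $c_2$ as in~\eqref{def:cs}. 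For the $\tau_k$ branch, the reverse triangle inequality gives $\tau_{k,i}\geq(\|[x_k]_{\Gcal_i}\|_2-\rhobar_{k,i})/\|[d_k]_{\Gcal_i}\|_2$, and combining Lemma~\ref{lem:cg-props}(i)(c) with $\|[d_k]_{\Gcal_i}\|_2\leq 2g/\mumin$ gives $\tau_k\geq\kappa_2(1-\sin(\theta))\mumin g^{p-1}/2$, which feeds through to $c_1(\chicgk)^{1+p}$; taking the smaller of the two branches establishes~\eqref{result:dec-cg}. I expect the delicate part to be precisely this step-size bookkeeping: tracking which of the geometric threshold $\tau_k$ (a block reaching its ball) and the analytic Armijo threshold $\bar\beta$ governs the accepted step—these being what generate the distinct exponents $1+p$ and $2+p$—and handling the degenerate immediate-acceptance case where the accepted step equals one, whose decrease must still be reconciled with the claimed bound.
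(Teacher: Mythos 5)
Your proposal is correct and takes essentially the same route as the paper's proof: the same case split on where Algorithm~\ref{alg:ls-cg} returns, the same Taylor/Lipschitz estimate on $[0,\tau_k)$ from Lemma~\ref{lem:cg-props}(ii) to terminate the Armijo loop, and the same step-size dichotomy in which the geometric threshold $\tau_k$ produces $c_1(\chicgk)^{1+p}$ (via Lemma~\ref{lem:cg-props}(i)(c) and Lemma~\ref{lem:is-descent}(ii)) while the Armijo threshold produces $c_2(\chicgk)^{2+p}$ (via $\rhokmin \geq \kappa_2\|\nabla_{\Ik}(f+r)(x_k)\|_2^p$, the bound $\|\nabla_{\Ik}(f+r)(x_k)\|_2 \leq L_f + \lammax\sqrt{n_\Gcal}$, and $\|\nabla_{\Ik}(f+r)(x_k)\|_2 \geq \varphi\chicgk$ from Lemma~\ref{lem:g>s}). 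The immediate-acceptance corner case you flag at the end is handled no more rigorously by the paper itself (its step-size lower bound tacitly assumes the while loop executed at least once), so your proposal matches the paper's argument in both structure and level of detail.
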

\begin{proof}
Throughout, we use $F := f + r$.  It is possible that Algorithm~\ref{alg:ls-cg} successfully terminates in Line~\ref{line:ls-phi-nz-return}, in which case it follows from Line~\ref{line:ls-phi-nz-return} and Line~\ref{line:ls-phi-nz} that the returned $x_{k+1}$ and $\flagcgk$ satisfy $F(x_{k+1}) \leq F(x_k)$ and $\flagcgk = \zb$, indicating that $k\in\setcgZERO$. Moreover, upon termination, the value $j$ satisfies $\xi^j \geq \tau_k$ (see Line~\ref{line:update-cg-while}), which combined with Line~\ref{line:project} shows that at least one additional group of variables has become zero at $x_{k+1}$.  This proves that part (i) holds.

Next, suppose that Algorithm~\ref{alg:ls-cg} does not terminate in Line~\ref{line:ls-phi-nz-return}.  Observe from the definition of $\tau_k$ in Line~\ref{line:def-tauk} that $\tau_k > 0$ (this follows from Lemma~\ref{lem:cg-props}(i) and the definition of $\rhobar_{k,i}$). Therefore, it follows that the \textbf{while} loop starting in Line~\ref{line:update-cg-while} will terminate with the smallest nonnegative integer $\jbar$ such that $\xi^{\jbar} < \tau_k$, and the \textbf{loop} in Line~\ref{line:update-cg-loop} will begin with $j = \jbar$.
We now claim that the condition in Line~\ref{line:ls-phi-SD} used to determine termination of the \textbf{loop} is satisfied for all $j \geq \jbar$ such that
\begin{equation}\label{xij-interval}
\xi^j \in \left[0, \frac{2(\eta-1)\nabla_\Ik F(x_k)^T [d_k]_\Ik}{(L+\lammax/\rhokmin)\|[d_k]_\Ik\|_2^2}\right] \subset [0,\tau_k).
\end{equation}
To see that this claim holds, we can use the integral form of Taylor's Theorem and Lemma~\ref{lem:cg-props}(ii) (using the fact that $\gamma\xi^j\in [0,\tau_k)$ for all $\gamma\in[0,1]$) to obtain
\begin{align*}
|F(x_k &+ \xi^j d_k) - F(x_k) - \xi^j \nabla_\Ik F(x_k)^T [d_k]_\Ik | \\\
&\leq \left| \int_{0}^1 \xi^j [d_k]_\Ik^T \big(\nabla_\Ik F(x_k+\gamma\xi^j d_k) - \nabla_\Ik F(x_k) \big) \text{d}\gamma \right| \\
&\leq \xi^j \int_{0}^1 \|[d_k]_\Ik\|_2 \|\nabla_\Ik F(x_k+\gamma\xi^j d_k) - \nabla_\Ik F(x_k) \big)\|_2 \text{d}\gamma \\
&\leq \xi^{2j} (L+\lammax/\rhokmin)\|[d_k]_\Ik\|_2^2 \int_{0}^1 \gamma  \text{d}\gamma 
= \thalf \xi^{2j} (L+\lammax/\rhokmin) \|[d_k]_\Ik\|_2^2.  
\end{align*}
Combining this inequality with~\eqref{xij-interval} yields
\begin{align*}
F(x_k+\xi^j d_k) 
&\leq F(x_k) + \xi^j \nabla_\Ik F(x_k)^T [d_k]_\Ik +  \thalf \xi^{2j} (L+\lammax/\rhokmin) \|[d_k]_\Ik\|_2^2 \\
&= F(x_k) + \xi^j \nabla_\Ik F(x_k)^T [d_k]_\Ik + \xi^j(\eta-1)\nabla_\Ik F(x_k)^T [d_k]_\Ik \\
&= F(x_k) + \eta\xi^j \nabla_\Ik F(x_k)^T [d_k]_\Ik,
\end{align*}
which establishes our claim that the inequality in Line~\ref{line:ls-phi-SD} holds for all $j \geq \jbar$ such that $\xi^j$ satisfies~\eqref{xij-interval}. This shows that the \textbf{loop} will successfully terminate with $\flagcgk = \sd$ (thus indicating that $k\in\setcgSD$) and  $x_{k+1}$ satisfying
\begin{equation}\label{dec-gen}
F(x_{k+1}) 
\leq F(x_k) + \eta\xi^{\jhat} \nabla_\Ik F(x_k)^T [d_k]_\Ik
\end{equation}
for some $\jhat$ satisfying
\begin{align}
\xi^{\jhat} 
&\geq \min\left\{\xi^{\jbar}, \frac{2\xi(\eta-1)\nabla_\Ik F(x_k)^T [d_k]_\Ik}{(L+\lammax/\rhokmin)\|[d_k]_\Ik\|_2^2}\right\} \nonumber \\
&\geq \min\left\{\xi\tau_k, \frac{2\xi(\eta-1)\nabla_\Ik F(x_k)^T [d_k]_\Ik}{(L+\lammax/\rhokmin)\|[d_k]_\Ik\|_2^2}\right\} \label{3-cases}
\end{align}
where the second inequality follows from the fact that $\jbar$ is the \emph{smallest} nonnegative integer such that $\xi^{\jbar} < \tau_k$. We now consider two cases.

\noindent\textbf{Case 1:} the minimum in~\eqref{3-cases} is $\xi\tau_k$, from which we may conclude that $\tau_k < \infty$. Using~\eqref{dec-gen} and Lemma~\ref{lem:is-descent}(i) we have that
\begin{equation}\label{dec-case-2}
\begin{aligned}
F(x_{k+1}) 
&\leq F(x_k) + \eta\xi^{\jhat} \nabla_\Ik F(x_k)^T [d_k]_\Ik 
\leq F(x_k) - \tfrac{\eta\xi}{\mumax}\tau_k\|\nabla_{\Ical_k} F(x_k) \|_2^2.
\end{aligned}
\end{equation}
We now seek a lower bound on $\tau_k$. Consider $i$ such that $\tau_{k,i} < \infty$ when computed in Algorithm~\ref{alg:ls-cg}. The triangle inequality gives 
$\rhobar_{k,i} = \|[x_k+\tau_{k,i}d_k]_\Gcali\|_2
\geq \|[x_k]_\Gcali\|_2 - \tau_{k,i}\|[d_k]_\Gcali\|_2$, 
which together with Lemma~\ref{lem:cg-props}(i)(c) and Lemma~\ref{lem:is-descent}(ii) shows that
\begin{align*}
\tau_{k,i} 
&\geq \frac{\|[x_k]_\Gcali\|_2 - \rhobar_{k,i}}{\|[d_k]_\Gcali\|_2} \\
&\geq \frac{\mumin\kappa_2 (1-\sin(\theta))\|\nabla_\Ik F(x_k)\|_2^p }{2\|\nabla_\Ik F(x_k) \|_2}
= \thalf\mumin\kappa_2 (1-\sin(\theta)) \|\nabla_\Ik F(x_k)\|_2^{p-1}.
\end{align*}
From this, it follows that $\tau_k \geq \thalf\mumin\kappa_2 (1-\sin(\theta))\|\nabla_\Ik F(x_k)\|_2^{p-1}$.  Using this inequality  with~\eqref{dec-case-2},  Lemma~\ref{lem:g>s}, 
and the set $\Ik$ from  Line~\ref{line:Ik-phi} shows that 
\begin{align*}
F(x_{k+1}) 
&\leq F(x_k) - \frac{\eta\xi\mumin\kappa_2(1-\sin(\theta))}{2\mumax}  \|\nabla_{\Ical_k} F(x_k) \|_2^{1+p} \\
&\leq F(x_k) - \frac{\eta\xi\mumin\kappa_2(1-\sin(\theta))}{2\mumax}  \|[s_k]_\Ik \|_2^{1+p} \\
&\leq F(x_k) - \frac{\eta\xi\mumin\kappa_2\big(1-\sin(\theta)\big)\varphi^{1+p}}{2\mumax}(\chicgk)^{1+p},
\end{align*}
thus completing the proof for this case.

\noindent\textbf{Case 2:} the minimum in~\eqref{3-cases} is $\frac{2\xi(\eta-1)\nabla_\Ik F(x_k)^T [d_k]_\Ik}{(L+\lammax/\rhokmin)\|[d_k]_\Ik\|_2^2}$. Combining this fact with \eqref{dec-gen}, \eqref{3-cases}, Lemma~\ref{lem:is-descent}(i), and Lemma~\ref{lem:is-descent}(ii) shows that
\begin{equation}\label{dec-3}
\begin{aligned}
F(x_{k+1}) 
&\leq F(x_k) + \eta\xi^{\jhat} \nabla_\Ik F(x_k)^T [d_k]_\Ik \\
&\leq F(x_k) - \frac{2\xi\eta(1-\eta)\|\nabla_\Ik F(x_k)\|_2^4}{\mumax^2(L+\lammax/\rhokmin)\|[d_k]_\Ik\|_2^2} \\
&\leq F(x_k) - \frac{2\mumin^2 \xi\eta(1-\eta)\|\nabla_\Ik F(x_k)\|_2^4}{4\mumax^2(L+\lammax/\rhokmin)\|\nabla_\Ik F(x_k)\|_2^2} \\
&= F(x_k) - \frac{\mumin^2 \xi\eta(1-\eta)\|\nabla_\Ik F(x_k)\|_2^2}{2\mumax^2(L+\lammax/\rhokmin)}.
\end{aligned}
\end{equation} 
It follows from~\eqref{def:rhokmin}, \eqref{def:rhoki}, and $\Ik\subseteq\Icgk$ that $\rhokmin \geq \kappa_2\|\nabla_\Ik F(x_k)\|_2^p$.  Combining this bound with~\eqref{dec-3} shows that
\begin{equation}\label{dec-3b}
\begin{aligned}
F(x_{k+1}) 
&\leq F(x_k) - \frac{\mumin^2 \xi\eta(1-\eta)\|\nabla_\Ik F(x_k)\|_2^2}{2\mumax^2(L+\lammax/\rhokmin)} \\
&\leq F(x_k) - \frac{\mumin^2 \xi\eta(1-\eta)\|\nabla_\Ik F(x_k)\|_2^2}{2\mumax^2\big(L+\lammax/(\kappa_2\|\nabla_\Ik F(x_k)\|_2^p)\big)} \\
&= F(x_k) - \frac{\kappa_2\mumin^2 \xi\eta(1-\eta)\|\nabla_\Ik F(x_k)\|_2^{2+p}}{2\mumax^2(L \kappa_2\|\nabla_\Ik F(x_k)\|_2^p + \lammax)}.
\end{aligned}
\end{equation}
Next, we know from Lemma~\ref{lem:K-pg}, Lemma~\ref{lem:dec-cg-wp}(i), and
equations~\eqref{dec-case-2} and \eqref{dec-3b} that $F(x_k) \leq F(x_0)$ for all $k\in\N{}$, i.e., $x_k\in\Lcal$ for all $k\in\N{}$.  Combining this fact with the triangle inequality, Assumption~\ref{ass.first}, the definition of $r$, and~\eqref{def:rhokmin} gives
\begin{align*}
\|\nabla_\Ik F(x_k)\|_2
&\leq \|\nabla_\Ik f(x_k)\|_2 + \|\nabla_\Ik r(x_k)\|_2 \\
&= \|\nabla_\Ik f(x_k)\|_2 + \Big( \sum_{i:\Gcali\subseteq\Ik}\|\nabla_\Gcali r(x_k)\|_2^2\Big)^{1/2}  \\
&\leq L_f + \Big( \sum_{i:\Gcali\subseteq\Ik}\|\lambda_i [x_k]_\Gcali / \|[x_k]_\Gcali\|_2\|_2^2\Big)^{1/2} \\
&=  L_f + \Big( \sum_{i:\Gcali\subseteq\Ik} \lambda_i^2 \Big)^{1/2} 
\leq L_f +  \Big( \sum_{i:\Gcali\subseteq\Ik} \lammax^2 \Big)^{1/2} 
\leq L_f +  \lammax \sqrt{n_\Gcal}.
\end{align*}
Combining this with~\eqref{dec-3b} gives
$$
F(x_{k+1}) 
\leq F(x_k) - \left(\frac{\kappa_2\mumin^2 \xi\eta(1-\eta)}{2\mumax^2\big(L \kappa_2 (L_f+\lammax\sqrt{n_\Gcal})^p + \lammax\big)}\right) \|\nabla_\Ik F(x_k)\|_2^{2+p}, 
$$
which combined with Lemma~\ref{lem:g>s} 
and how the index set $\Ik$ in Line~\ref{line:Ik-phi} is defined gives
\begin{align*}
F(x_{k+1}) 
&\leq F(x_k) - \left(\frac{\kappa_2\mumin^2 \xi\eta(1-\eta)}{2\mumax^2\big(L \kappa_2 (L_f+\lammax\sqrt{n_\Gcal})^p + \lammax\big)}\right) \|[s_k]_\Ik\|_2^{2+p} \\
&\leq F(x_k) - \left(\frac{\kappa_2\mumin^2 \xi\eta(1-\eta)\varphi^{2+p}}{2\mumax^2\big(L \kappa_2 (L_f+\lammax\sqrt{n_\Gcal})^p + \lammax\big)}\right) (\chicgk)^{2+p},
\end{align*}
thus completing the proof.
\end{proof}

The result in~\eqref{result:dec-cg} motivates us to define the following subsets of $\setcgSD$:
\begin{equation}\label{eq.frank}
\setcgSDbig := \{k\in\setcgSD: \chicgk \geq c_1/c_2\} \ \ \text{and} \ \
\setcgSDsmall := \setcgSD\setminus\setcgSDbig. 
\end{equation}
This distinction plays a role in our complexity result.  First, we 
require a lemma.

\begin{lemma}\label{obj-monotonic}
The objective function $f+r$ is monotonically decreasing over the sequence of iterates $\{x_k\}$ and $\lim_{k\to\infty} \big(f(x_k) + r(x_k) \big) 
=: \lb > -\infty$.
\end{lemma}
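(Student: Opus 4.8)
The plan is to show that each of the four mutually exclusive iteration types already analyzed produces a nonincreasing objective value, and then to invoke boundedness below to extract a finite limit. First I would recall the partition established just before this lemma, namely $\N{} = \setcg \cup \setpg$ together with the refinements $\setcg = \setcgZERO \cup \setcgSD$ and $\setpg = \setpgSAME \cup \setpgDEC$, so that every iteration index $k$ belongs to exactly one of the four classes $\setcgZERO$, $\setcgSD$, $\setpgSAME$, and $\setpgDEC$. The entire argument then reduces to checking a single inequality, $f(x_{k+1}) + r(x_{k+1}) \leq f(x_k) + r(x_k)$, separately on each class.

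Next I would assemble the per-class decrease guarantees already proved. For $k\in\setpgSAME$, Lemma~\ref{lem:K-pg}(i) gives $f(x_{k+1}) + r(x_{k+1}) \leq f(x_k) + r(x_k) - \tfrac{\eta\varphi^2}{\alpha_k}(\chipgk)^2$, and since the subtracted quantity is nonnegative this yields the desired nonincrease; for $k\in\setpgDEC$, Lemma~\ref{lem:K-pg}(ii) gives the (strict) inequality $f(x_{k+1}) + r(x_{k+1}) < f(x_k) + r(x_k)$ directly. Similarly, for $k\in\setcgZERO$, Lemma~\ref{lem:dec-cg-wp}(i) gives $f(x_{k+1}) + r(x_{k+1}) \leq f(x_k) + r(x_k)$ outright, while for $k\in\setcgSD$, Lemma~\ref{lem:dec-cg-wp}(ii) subtracts the nonnegative quantity $\min\{c_1(\chicgk)^{1+p}, c_2(\chicgk)^{2+p}\}$ (nonnegativity following from $c_1,c_2 > 0$ in~\eqref{def:cs}), again giving nonincrease. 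Combining the four cases, $f(x_{k+1}) + r(x_{k+1}) \leq f(x_k) + r(x_k)$ holds for every $k\in\N{}$, which is precisely the claimed monotonicity.

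Finally, I would conclude the existence of the limit by appealing to the order-completeness of $\R{}$: the scalar sequence $\{f(x_k) + r(x_k)\}$ is monotonically nonincreasing by the preceding paragraph and is bounded below by Assumption~\ref{ass.first}, so it converges to some finite value, which I would name $\lb$, with $\lb > -\infty$.

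I do not anticipate any genuine analytical obstacle here: the lemma is essentially a bookkeeping corollary that consolidates the case-by-case descent estimates from Lemma~\ref{lem:K-pg} and Lemma~\ref{lem:dec-cg-wp}. The only point requiring care is completeness of the case analysis---verifying that the four subsets exhaust $\N{}$ and that the matching descent lemma applies on each---so that no iteration type is left without a nonincrease guarantee; once that is confirmed, monotonicity and the bounded-below hypothesis immediately deliver the finite limit.
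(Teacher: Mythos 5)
Your proposal is correct and follows essentially the same route as the paper's own proof, which likewise cites Lemma~\ref{lem:K-pg} and Lemma~\ref{lem:dec-cg-wp} for monotonicity and then invokes Assumption~\ref{ass.first} (boundedness below) to obtain the finite limit. Your version simply makes the case-by-case bookkeeping over $\setpgSAME$, $\setpgDEC$, $\setcgZERO$, and $\setcgSD$ explicit, which the paper compresses into a single sentence.
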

\begin{proof}
It follows from Lemma~\ref{lem:K-pg} and Lemma~\ref{lem:dec-cg-wp} that the objective function is monotonically decreasing over the iterate sequence. The remaining conclusion of the lemma follows from the monotonicity property and Assumption~\ref{ass.first}.   
\end{proof}

The main theorem can now be stated.  It gives an upper bound on the number of iterations performed by Algorithm~\ref{alg:main} before an approximate solution is obtained.

\begin{theorem}\label{thm:complexity}
Let $c_1$ and $c_2$ be the constants defined in~\eqref{def:cs} and let us define $c_3 := \eta\varphi^2/\alpha_0 > 0$. For any $\epsilon > 0$, define $\setepsilon := \{k \in \N{} : \max\{\chicgk,\chipgk\} > \epsilon\}$.  Then, 
\begin{equation}\label{final-set-bds}
\begin{aligned}
|\setpgSAME \cap \setepsilon|
&\leq c_{\mathrm{pg}}\epsilon^{-2} + 1, \\
|\setcgSDbig \cap \setepsilon| 
&\leq  c_{\mathrm{big}}\epsilon^{-(1+p)} + 1, \ \ \text{and} \\
|\setcgSDsmall \cap \setepsilon| 
&\leq c_{\mathrm{small}}\epsilon^{-(2+p)} + 1
\end{aligned} 
\end{equation} 
where the constants $c_{\mathrm{pg}}$, $c_{\mathrm{big}}$, and $c_{\mathrm{small}}$ are given, respectively, by
\begin{equation} \label{def:cbig}
\begin{aligned}
c_{\mathrm{pg}}  &:= \big( f(x_0) + r(x_0) - \lb \big) / c_3, \\
c_{\mathrm{big}} &:= \big( f(x_0) + r(x_0) - \lb \big) / c_1, \ \ \text{and} \\
c_{\mathrm{small}} &:= \big( f(x_0) + r(x_0) - \lb \big) / c_2.
\end{aligned}
\end{equation}
Therefore, if $\epsilon \geq c_1/c_2$, then
\begin{equation} \label{result:big}
|\setepsilon| \leq \big( c^\alpha_\downarrow + c_{\mathrm{pg}}\epsilon^{-2} + c_{\mathrm{big}}\epsilon^{-(1+p)} + 2\big)(1+ n_\Gcal) + n_\Gcal
\end{equation}
where $c^\alpha_\downarrow$ is defined in~\eqref{bound-Kalphadecr}; otherwise, i.e., if $\epsilon < c_1/c_2$, then
\begin{equation} \label{result:small}
|\setepsilon| \leq \big( c^\alpha_\downarrow + c_{\mathrm{pg}}\epsilon^{-2} + c_{\mathrm{big}}\epsilon^{-(1+p)} + c_{\mathrm{small}}\epsilon^{-(2+p)}  + 3\big)(1+ n_\Gcal) + n_\Gcal.
\end{equation}
\end{theorem}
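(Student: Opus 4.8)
The plan is to use the disjoint partition $\N{} = \setcgZERO\cup\setcgSDbig\cup\setcgSDsmall\cup\setpgSAME\cup\setpgDEC$ established before the theorem and to bound $|\setepsilon|$ by bounding the cardinality of its intersection with each of these five sets. Four of these intersections (everything except $\setcgZERO$) are controlled by a telescoping objective-decrease argument together with Lemma~\ref{lem:alpha-fixed}; the intersection with $\setcgZERO$ requires a separate combinatorial argument based on the creation of zero blocks. A useful preliminary observation I would record first is that on $\setpg$ the test in Line~\ref{line:main.if} fails, so $\chipgk = \max\{\chicgk,\chipgk\}$, whereas on $\setcg$ it holds, so $\chicgk = \max\{\chicgk,\chipgk\}$; hence every $k\in\setpg\cap\setepsilon$ satisfies $\chipgk>\epsilon$ and every $k\in\setcg\cap\setepsilon$ satisfies $\chicgk>\epsilon$.

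For the three bounds in~\eqref{final-set-bds} I would invoke Lemma~\ref{obj-monotonic}: writing $d_k := (f(x_k)+r(x_k))-(f(x_{k+1})+r(x_{k+1}))\geq 0$, one has $\sum_{k\in S}d_k\leq\sum_{k\in\N{}}d_k = f(x_0)+r(x_0)-\lb$ for every $S\subseteq\N{}$. For $k\in\setpgSAME\cap\setepsilon$, Lemma~\ref{lem:K-pg}(i) with $\alpha_k\leq\alpha_0$ and $\chipgk>\epsilon$ gives $d_k\geq(\eta\varphi^2/\alpha_0)(\chipgk)^2 > c_3\epsilon^2$, and summing yields $|\setpgSAME\cap\setepsilon|\leq c_{\mathrm{pg}}\epsilon^{-2}$. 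For $k\in\setcgSDbig\cap\setepsilon$, the definition~\eqref{eq.frank} forces the minimum in~\eqref{result:dec-cg} to equal $c_1(\chicgk)^{1+p}$, so $d_k>c_1\epsilon^{1+p}$ and $|\setcgSDbig\cap\setepsilon|\leq c_{\mathrm{big}}\epsilon^{-(1+p)}$; symmetrically $|\setcgSDsmall\cap\setepsilon|\leq c_{\mathrm{small}}\epsilon^{-(2+p)}$ (the "$+1$" terms are harmless slack). I would also note the regime split: when $\epsilon\geq c_1/c_2$ the set $\setcgSDsmall\cap\setepsilon$ is empty, since $k\in\setcgSDsmall$ forces $\chicgk<c_1/c_2\leq\epsilon$, contradicting $k\in\setepsilon$; this is exactly what removes the $c_{\mathrm{small}}$ term and lowers the additive constant from $3$ to $2$ in~\eqref{result:big}. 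Finally $|\setpgDEC\cap\setepsilon|\leq|\setpgDEC|\leq c^\alpha_\downarrow$ by~\eqref{bound-Kalphadecr}.

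The main obstacle is bounding $|\setcgZERO\cap\setepsilon|$, because Lemma~\ref{lem:dec-cg-wp}(i) only guarantees $f+r$ does not increase on these iterations, so the telescoping argument is vacuous there. Here I would exploit the structural content of Lemma~\ref{lem:dec-cg-wp}(i): each $k\in\setcgZERO$ creates at least one new block of zeros, and—since $\Ical_k\subseteq\Icgk\subseteq\Icgkbar$ contains only groups that are already nonzero and Algorithm~\ref{alg:ls-cg} can only set groups to zero—no CG iteration ever turns a zero group into a nonzero one. Letting $Z_k$ denote the number of nonzero groups at $x_k$, this gives $Z_{k+1}\leq Z_k-1$ on $\setcgZERO$, $Z_{k+1}=Z_k$ on $\setcgSD$, and reactivation of a group only on $\setpg$. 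Hence any maximal run of consecutive $\setcgZERO$ iterations has length at most $n_\Gcal$ (it begins with at most $n_\Gcal$ nonzero groups and strips at least one per step with no replenishment), and distinct runs are separated by non-$\setcgZERO$ iterations; charging each run meeting $\setepsilon$ together with the initial budget of at most $n_\Gcal$ groups would give $|\setcgZERO\cap\setepsilon|\leq(M+1)n_\Gcal$, where $M$ bounds the non-$\setcgZERO$ iterations in $\setepsilon$. The delicate point I expect to need the most care is precisely this charging: because $\max\{\chicgk,\chipgk\}$ need not be monotone in $k$, a run meeting $\setepsilon$ need not be adjacent to a non-$\setcgZERO$ iteration lying in $\setepsilon$, so the bookkeeping must be arranged so that reactivations occurring at iterations outside $\setepsilon$ are charged against the group budget $n_\Gcal$ rather than against $M$.

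With these pieces in hand the assembly is immediate. By the partition, $|\setepsilon|$ is the sum of the five intersection cardinalities. The four non-$\setcgZERO$ terms sum to at most $M := c^\alpha_\downarrow + c_{\mathrm{pg}}\epsilon^{-2} + c_{\mathrm{big}}\epsilon^{-(1+p)} + 2$ when $\epsilon\geq c_1/c_2$ (dropping the empty $\setcgSDsmall$ term) and to at most $M := c^\alpha_\downarrow + c_{\mathrm{pg}}\epsilon^{-2} + c_{\mathrm{big}}\epsilon^{-(1+p)} + c_{\mathrm{small}}\epsilon^{-(2+p)} + 3$ otherwise. Adding $|\setcgZERO\cap\setepsilon|\leq(M+1)n_\Gcal$ yields $|\setepsilon|\leq M + (M+1)n_\Gcal = M(1+n_\Gcal)+n_\Gcal$, which is exactly~\eqref{result:big} when $\epsilon\geq c_1/c_2$ and~\eqref{result:small} when $\epsilon<c_1/c_2$.
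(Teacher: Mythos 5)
Your proposal follows essentially the same route as the paper's proof: the same telescoping argument based on Lemma~\ref{obj-monotonic} together with the per-iteration decrease guarantees of Lemma~\ref{lem:K-pg}(i) and Lemma~\ref{lem:dec-cg-wp}(ii) gives~\eqref{final-set-bds}, the same observation that $\setcgSDsmall\cap\setepsilon=\emptyset$ handles the regime $\epsilon \geq c_1/c_2$, the bound $|\setpgDEC|\leq c^\alpha_\downarrow$ comes from~\eqref{bound-Kalphadecr}, and the same ``at most $n_\Gcal$ consecutive $\setcgZERO$ iterations between non-$\setcgZERO$ iterations'' counting (via Lemma~\ref{lem:dec-cg-wp}(i)) produces the assembly $M(1+n_\Gcal)+n_\Gcal$. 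The delicate charging issue you flag---that a run of $\setcgZERO$ iterations meeting $\setepsilon$ need not be adjacent to a non-$\setcgZERO$ iteration lying in $\setepsilon$---is not addressed by the paper either, which disposes of the $\setcgZERO$ count in a single sentence, so on this point your attempt is actually more explicit than the published argument.
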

\begin{proof}
Note that the definitions of $\setcg$ and $\setpg$ together with Line~\ref{line:main.if} show that
\begin{equation}\label{chi-inequalities}
\chicgk \geq \chipgk \ \text{for $k\in \setcg$} 
\ \ \text{and} \ \ 
\chipgk > \chicgk  \ \text{for $k\in\setpg$.} 
\end{equation}
Define $\Delta_k := f(x_k) + r(x_k) - \big( f(x_{k+1}) + r(x_{k+1})\big)$ and $m_k := \max\{\chipgk,\chicgk\}$.  Using Lemma~\ref{lem:K-pg}(i), Lemma~\ref{lem:alpha-fixed}, Lemma~\ref{lem:dec-cg-wp}(ii), the definitions of $c_3$ and $\setepsilon$ in the statement of the theorem, and \eqref{chi-inequalities} shows for arbitrary $\kbar \in \N{}$ that
\begin{equation*}
\begin{aligned}
&f(x_0) + r(x_0) - \big( f(x_{\kbar+1})+r(x_{\kbar+1}) \big) = \sum_{0 \leq k \leq \kbar} \Delta_k \\
&\geq \sum_{\substack{k\in\setpgSAME \cap \setepsilon \\ 0 \leq k \leq \kbar}} \Delta_k
    + \sum_{\substack{k\in\setcgSDbig \cap \setepsilon \\ 0 \leq k \leq \kbar}} \Delta_k
    + \sum_{\substack{k\in\setcgSDsmall \cap \setepsilon \\ 0 \leq k \leq \kbar}} \Delta_k
    \\
&\geq 
 \sum_{\substack{k\in\setpgSAME \cap \setepsilon \\ 0 \leq k \leq \kbar}} c_3(\chipgk)^2 
     + \sum_{\substack{k\in\setcgSDbig \cap \setepsilon \\ 0 \leq k \leq \kbar}} c_1(\chicgk)^{1+p} 
     + \sum_{\substack{k\in\setcgSDsmall \cap \setepsilon \\ 0 \leq k \leq \kbar}} c_2(\chicgk)^{2+p} \\
&=  \sum_{\substack{k\in\setpgSAME \cap \setepsilon \\ 0 \leq k \leq \kbar}} c_3 m_k^2
     + \sum_{\substack{k\in\setcgSDbig \cap \setepsilon \\ 0 \leq k \leq \kbar}} c_1 m_k^{1+p} 
     + \sum_{\substack{k\in\setcgSDsmall \cap \setepsilon \\ 0 \leq k \leq \kbar}} c_2 m_k^{2+p} \\
&\geq \sum_{\substack{k\in\setpgSAME \cap \setepsilon \\ 0 \leq k \leq \kbar}} c_3 \epsilon^2 
     + \sum_{\substack{k\in\setcgSDbig \cap \setepsilon \\ 0 \leq k \leq \kbar}} c_1 \epsilon^{1+p}
     + \sum_{\substack{k\in\setcgSDsmall \cap \setepsilon \\ 0 \leq k \leq \kbar}} c_2 \epsilon^{2+p}. 
\end{aligned}
\end{equation*}
From this inequality, Lemma~\ref{obj-monotonic}, and~\eqref{def:cbig} one finds that \eqref{final-set-bds} follows.

Next, suppose that $\epsilon \geq c_1/c_2$. 
It then follows from~\eqref{eq.frank} and \eqref{chi-inequalities} that 
$\chicgk
= \max\{\chipgk,\chicgk\}
> \epsilon \geq c_1/c_2$
for all $k\in\setcg$, 
which implies that $\setcgSDsmall \cap \setepsilon = \emptyset$.  The result in~\eqref{result:big} follows from this observation, \eqref{final-set-bds}, \eqref{bound-Kalphadecr}, and since (by Lemma~\ref{lem:dec-cg-wp}(i)) at most $n_\Gcal$ iterations in $\setcgZERO$ can occur before the first, after the last, or between any two iterations in $\setpgDEC\cup\setpgSAME\cup\setcgSD$. 

The final result~\eqref{result:small} follows using the same argument as in the previous paragraph, except now $\setcgSDsmall \cap \setepsilon$ is no longer necessarily empty.
\end{proof}

We see from~\eqref{result:small} that, for all sufficiently small $\epsilon$, the worst case complexity result for Algorithm~\ref{alg:main} is $\epsilon^{-(2+p)}$, which is worse than the $\epsilon^{-2}$ result that holds for the PG method.  If one is concerned with such a result, the difference can be made arbitrarily small (for a range of $\epsilon$ values typically used in practice) by choosing $p$ sufficiently small. However, as is typical with well-designed second-derivative methods, although the complexity bound is worse, it typically performs better (see Section~\ref{sec.numerical}).

\subsection{Local convergence}\label{sec.lc}

We now consider the local convergence rate of the iterates generated by Algorithm~\ref{alg:main}.  Our analysis is performed under the following additional assumption that will be assumed to hold throughout this section.

\begin{assumption}\label{ass.lc}
The function $f$ is twice continuously differentiable and strongly convex.
It follows that  that there exists a unique solution $x_*$ to the optimization problem~\eqref{prob} with optimal support $\Scal_* := \{i: [x_*]_\Gcali \neq 0\}$. 
Moreover, we assume that $\nabla^2 f: \R{n}\to\R{n\times n}$ is Lipschitz continuous in a neighborhood of the solution $x_*$, and that  $f+r$ is nondegenerate at $x_*$ in the sense that $\|[\nabla f(x_*)]_\Gcali\|_2 < \lambda_i$ for all $i\notin\Scal_*$. 
\end{assumption}

Optimality conditions for problem~\eqref{prob} imply that $\|[\nabla f(x_*)]_\Gcali \|_2 \leq \lambda_i$ for all $i\notin\Scal_*$.  Thus, the final condition in Assumption~\ref{ass.lc} is a strengthening of this fact.

\begin{assumption}\label{ass.Ik-all}
The following algorithmic choices are made in Algorithm~\ref{alg:main}:
\begin{itemize}
\item[(i)] The backtracking parameter is chosen to satisfy $\eta\in(0,1/2)$. 
\item[(ii)] For all sufficiently large $k \in \N{}$, $\Ik$ in Line~\ref{line:Ik-phi}/\ref{line:Ik-beta} is chosen as
\begin{equation}\label{Ik-is-all}
\Ik =
\begin{cases}
\Icgk & \text{if $k\in\setcg$,} \\ 
\Ipgk & \text{if $k\in\setpg$.}
\end{cases}
\end{equation}
\item[(iii)] For all sufficiently large $k\in\setcg$, $H_k = \nabla^2_{\Ik\Ik} (f+r)(x_k)$ is chosen in Line~\ref{line:gH}.
\end{itemize}
\end{assumption}

The next result establishes that the iterate sequence converges to $x_*$.

\ifreport
\begin{theorem}\label{thm:its-converge}
The iterate sequence $\{x_k\}$ generated by Algorithm~\ref{alg:main} satisfies 
$$
\lim_{k\to\infty} x_k = x_*
\ \ \text{and}  \ \ 
\lim_{k\to\infty} \max\{\chipgk,\chicgk\} = 0.
$$
\end{theorem}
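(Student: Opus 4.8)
The plan is to establish the two limits in turn: first the vanishing of the optimality measure via the complexity bound, and then convergence of the iterates via a compactness-and-uniqueness argument that invokes Lemma~\ref{optimality-measure}.

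For the limit $\lim_{k\to\infty}\max\{\chipgk,\chicgk\} = 0$, I would appeal directly to Theorem~\ref{thm:complexity}. That result bounds $|\setepsilon|$, the number of iterations at which $\max\{\chicgk,\chipgk\} > \epsilon$, by a finite quantity for every fixed $\epsilon > 0$. Hence only finitely many iterations can have optimality measure exceeding any given positive threshold, so $\limsup_{k\to\infty}\max\{\chicgk,\chipgk\}\le\epsilon$ for all $\epsilon > 0$, which forces the $\limsup$, and therefore the limit, to be zero.

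For the limit $\lim_{k\to\infty} x_k = x_*$, I would first assemble two structural facts. Since $f$ is strongly convex by Assumption~\ref{ass.lc}, the sum $f+r$ is strongly convex and hence has bounded level sets; as $f(x_k)+r(x_k)\le f(x_0)+r(x_0)$ for all $k$ by Lemma~\ref{obj-monotonic}, the entire iterate sequence lies in the bounded set $\Lcal$ and so is bounded. Additionally, Lemma~\ref{lem:alpha-fixed} shows the PG parameter is decreased only finitely often, so $\{\alpha_k\}$ is eventually constant at some $\alpha_*\ge\alpha_{\min} > 0$, whence $\alpha_k\to\alpha_*$. The argument then runs through subsequences: let $\{x_{k_j}\}$ be any convergent subsequence with limit $\bar{x}$. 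Along it, $\alpha_{k_j}\to\alpha_* > 0$ and, by the first part of the proof, $\max\{\chi^{\mathrm{pg}}_{k_j},\chi^{\mathrm{cg}}_{k_j}\}\to 0$, so Lemma~\ref{optimality-measure} applied with $\Kcal=\{k_j\}$ identifies $\bar{x}$ as a solution of problem~\eqref{prob}. Strong convexity makes this solution unique, so $\bar{x}=x_*$. Because every convergent subsequence of the bounded sequence $\{x_k\}$ shares the limit $x_*$, the whole sequence converges to $x_*$.

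The main obstacle I anticipate is the compactness step: one must carefully justify that strong convexity of $f$, combined with $r\ge 0$, makes $\Lcal$ bounded so that $\{x_k\}$ admits convergent subsequences, and one must correctly use the eventual constancy of $\{\alpha_k\}$ to guarantee that the hypothesis $\alpha_* > 0$ of Lemma~\ref{optimality-measure} holds along every such subsequence. Once these are in place, the passage from subsequential limits to convergence of the full sequence is standard.
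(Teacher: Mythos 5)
Your proof is correct and follows essentially the same route as the paper's: the first limit from Theorem~\ref{thm:complexity}, boundedness of $\{x_k\}$ from Lemma~\ref{obj-monotonic} and Assumption~\ref{ass.lc}, and identification of subsequential limits as the unique solution $x_*$ via Lemma~\ref{optimality-measure} together with Lemma~\ref{lem:alpha-fixed}. The only cosmetic difference is the final step: you conclude full-sequence convergence from the fact that all subsequential limits of the bounded sequence coincide, whereas the paper invokes monotonicity of $\{f(x_k)+r(x_k)\}$ (with strong convexity) for the same purpose; both are valid and standard.
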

\else
  \begin{theorem}\label{thm:its-converge}
  The iterate sequence $\{x_k\}$ generated by Algorithm~\ref{alg:main} satisfies $\lim_{k\to\infty} x_k = x_*$ and $\lim_{k\to\infty} \max\{\chipgk,\chicgk\} = 0$.
  \end{theorem}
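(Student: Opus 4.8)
The plan is to peel off the two conclusions separately, establishing the limit of the optimality measure first and then leveraging it to pin down the limit of the iterates. I would obtain $\lim_{k\to\infty}\max\{\chipgk,\chicgk\}=0$ as an immediate consequence of the complexity result: Theorem~\ref{thm:complexity} bounds $|\setepsilon|$ by a finite quantity for every $\epsilon>0$, i.e., only finitely many iterations satisfy $\max\{\chipgk,\chicgk\}>\epsilon$. Since $\epsilon>0$ is arbitrary and $\max\{\chipgk,\chicgk\}\geq 0$, this is exactly the statement that the optimality measure tends to zero, and no further work is needed for the second claim.

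To handle the iterates, I would first argue that $\{x_k\}$ is bounded. By Lemma~\ref{obj-monotonic} the objective is monotonically nonincreasing along the iterates, so $f(x_k)+r(x_k)\leq f(x_0)+r(x_0)$ and hence every iterate lies in the level set $\Lcal$. Under Assumption~\ref{ass.lc}, strong convexity of $f$ together with $r\geq 0$ makes $f+r$ coercive, so $\Lcal$ is compact and $\{x_k\}$ lives in a compact set. I would also record that $\{\alpha_k\}$ converges: Algorithm~\ref{alg:main} only ever multiplies $\alpha_k$ by $\xi\in(0,1)$ or leaves it unchanged, so the sequence is nonincreasing, and it is bounded below by $\alpha_{\min}>0$ by Lemma~\ref{lem:alpha-fixed}; consequently $\alpha_k\to\alpha_*$ for some $\alpha_*\geq\alpha_{\min}>0$.

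With these facts in hand, convergence of the iterates follows by a standard subsequence argument. Let $\Kcal\subseteq\N{}$ index any convergent subsequence, say $\lim_{k\in\Kcal}x_k=\xbar$. Along $\Kcal$ we have $\alpha_k\to\alpha_*>0$ and, by the first part, $\max\{\chipgk,\chicgk\}\to 0$. Applying Lemma~\ref{optimality-measure} to this subsequence (with $\xbar$ playing the role of the limit point and $\alpha_*$ the limiting PG parameter) certifies that $\xbar$ solves problem~\eqref{prob}. Strong convexity in Assumption~\ref{ass.lc} guarantees the solution is unique, so $\xbar=x_*$. Since $\{x_k\}$ is bounded and every one of its convergent subsequences has the same limit $x_*$, the full sequence must converge, giving $\lim_{k\to\infty}x_k=x_*$.

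The proof is mostly a matter of assembling already-established pieces, so the only real care is in the boundedness step: I must use strong convexity of $f$ (rather than merely the bounded-gradient and Lipschitz hypotheses of Assumption~\ref{ass.first}) to force $\Lcal$ to be compact, since it is compactness that both permits extraction of convergent subsequences and promotes the ``every subsequential limit equals $x_*$'' conclusion into convergence of the whole sequence. I would additionally verify that the hypotheses of Lemma~\ref{optimality-measure} genuinely hold along the chosen subsequence, in particular that the limiting PG parameter $\alpha_*$ is strictly positive, which is precisely what Lemma~\ref{lem:alpha-fixed} delivers.
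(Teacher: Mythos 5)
Your proposal is correct and follows essentially the same path as the paper's proof: the optimality measure vanishes by Theorem~\ref{thm:complexity}, boundedness of $\{x_k\}$ follows from monotonicity (Lemma~\ref{obj-monotonic}) plus strong convexity in Assumption~\ref{ass.lc}, and a subsequential limit is certified optimal via Lemma~\ref{optimality-measure} together with Lemma~\ref{lem:alpha-fixed} (you are in fact slightly more explicit than the paper in noting that $\{\alpha_k\}$ is nonincreasing and bounded below, hence convergent to some $\alpha_* \geq \alpha_{\min} > 0$), after which uniqueness forces the limit to be $x_*$. The only cosmetic difference is the final promotion to full-sequence convergence: you use the standard compactness argument that all subsequential limits coincide, whereas the paper appeals to monotonicity of the objective values combined with strong convexity; both are valid.
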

\fi
\begin{proof}
Theorem~\ref{thm:complexity} gives $\lim_{k\to\infty} \max\{\chipgk,\chicgk\} = 0$.  Since $\{x_k\}$ is bounded due to monotonicity of $\{f(x_k)+r(x_k)\}$ (see Lemma~\ref{obj-monotonic}) and Assumption~\ref{ass.lc}, there exists an infinite $\Kcal \subseteq \N{}$ and $\xhat$ so that $\lim_{k\in\Kcal,k\to\infty} x_k = \xhat$.  It follows from Lemma~\ref{optimality-measure} and Lemma~\ref{lem:alpha-fixed} that $\xhat$ is a solution to problem~\eqref{prob}, but with Assumption~\ref{ass.lc} this means that $\xhat = x_*$, so $\lim_{k\in\Kcal,k\to\infty} x_k = x_*$. The fact that the \emph{entire} sequence $\{x_k\}$ converges to $x_*$ follows from this fact, Assumption~\ref{ass.lc}, and monotonicity of $\{f(x_k)+r(x_k)\}$.
\end{proof}

We now show for groups whose variables are all equal to zero at the solution that the PG step will eventually predict them to be zero.

\begin{lemma}\label{lem:not-in-support}
For all $i\notin\Scal_*$ and sufficiently large $k$, it holds that $[x_k+s_k]_\Gcali = 0$.
\end{lemma}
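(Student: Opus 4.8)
The plan is to exploit the explicit group-wise form of the proximal-gradient update in~\eqref{def:T}, which shows that $[x_k + s_k]_\Gcali = [T(x_k,\alpha_k)]_\Gcali$ vanishes precisely when the shrinkage threshold is active, i.e., when
$$
\|[x_k]_\Gcali - \alpha_k \nabla_\Gcali f(x_k)\|_2 \leq \alpha_k \lambda_i .
$$
Thus it suffices to verify that this inequality holds for all sufficiently large $k$ whenever $i \notin \Scal_*$. Note that the update involves only $\nabla f$ (not the nonsmooth $r$), so there is no differentiability concern arising from the zero block.

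First I would establish the relevant limits. By Theorem~\ref{thm:its-converge}, $x_k \to x_*$, and since $i \notin \Scal_*$ we have $[x_*]_\Gcali = 0$ by the definition of the support in Assumption~\ref{ass.lc}. By Lemma~\ref{lem:alpha-fixed}, the PG parameter is decreased only finitely often and remains bounded below by a positive constant, so it is eventually constant at some value $\alpha_* > 0$; in particular $\alpha_k \to \alpha_*$. Combining these facts with the continuity of $\nabla f$ gives
$$
\lim_{k\to\infty} \|[x_k]_\Gcali - \alpha_k \nabla_\Gcali f(x_k)\|_2 = \|0 - \alpha_* \nabla_\Gcali f(x_*)\|_2 = \alpha_* \|\nabla_\Gcali f(x_*)\|_2 ,
$$
while $\alpha_k \lambda_i \to \alpha_* \lambda_i$.

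The key step is then to invoke the nondegeneracy condition from Assumption~\ref{ass.lc}, namely $\|\nabla_\Gcali f(x_*)\|_2 = \|[\nabla f(x_*)]_\Gcali\|_2 < \lambda_i$ for $i \notin \Scal_*$. Since $\alpha_* > 0$, this yields the \emph{strict} inequality
$$
\lim_{k\to\infty} \big( \|[x_k]_\Gcali - \alpha_k \nabla_\Gcali f(x_k)\|_2 - \alpha_k \lambda_i \big) = \alpha_*\big(\|\nabla_\Gcali f(x_*)\|_2 - \lambda_i\big) < 0 .
$$
By continuity of the limit, the bracketed quantity is negative for all sufficiently large $k$, which is exactly the threshold-active condition guaranteeing $[T(x_k,\alpha_k)]_\Gcali = 0$, i.e.\ $[x_k + s_k]_\Gcali = 0$. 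As only finitely many groups $i \notin \Scal_*$ exist, a single threshold on $k$ handles them simultaneously.

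The main obstacle—such as it is—is less a computational difficulty than the need to use the \emph{strict} form of the nondegeneracy inequality: the plain first-order optimality condition only gives $\|[\nabla f(x_*)]_\Gcali\|_2 \leq \lambda_i$, which would leave the limit above merely $\leq 0$ and preclude concluding activation at any finite $k$. The strict inequality posited in Assumption~\ref{ass.lc} is precisely what supplies the margin needed to pass from the limiting identity to an eventual finite-$k$ statement, and the positivity of $\alpha_*$ from Lemma~\ref{lem:alpha-fixed} is what allows that margin to survive multiplication by the PG parameter.
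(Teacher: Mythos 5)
Your proof is correct and follows essentially the same route as the paper's: both use Theorem~\ref{thm:its-converge} together with the eventual constancy of $\alpha_k$ at some $\alpha_* > 0$ from Lemma~\ref{lem:alpha-fixed}, invoke the strict nondegeneracy inequality $\|[\nabla f(x_*)]_\Gcali\|_2 < \lambda_i$ from Assumption~\ref{ass.lc}, and conclude via the shrinkage formula~\eqref{def:T} that the threshold is active for all large $k$. Your remark that the strict (rather than merely non-strict) inequality is the essential ingredient matches exactly the role it plays in the paper's argument.
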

\begin{proof}
First note that Lemma~\ref{lem:alpha-fixed} and the update strategy for $\{\alpha_k\}$ in Algorithm~\ref{alg:main} ensure that there exists $\kbar_1$ such that $\alpha_k = \alpha_* > 0$ for all $k \geq \kbar_1$. Now, let $i\notin\Scal_*$ so that $[x_*]_\Gcali = 0$. It follows from  Assumption~\ref{ass.lc} that
\begin{equation*}
\frac{\alpha_* \lambda_i}{\|[x_*-\alpha_*\nabla f(x_*)]_\Gcali\|_2}
= \frac{\lambda_i}{\|[\nabla f(x_*)]_\Gcali \|_2} > 1.
\end{equation*}
Combining this with Theorem~\ref{thm:its-converge}, $\alpha_k = \alpha_* > 0$ for all $k \geq \kbar_1$, and Assumption~\ref{ass.first} shows that there exists a $\kbar_2 \geq \kbar_1$ such that $1-\alpha_k\lambda_i/\|[x_k - \alpha_k\nabla f(x_k)]_\Gcali\|_2 < 0$ for all $k\geq \kbar_2$. Using this fact with~\eqref{def:pg-step} and \eqref{def:T} shows that $[x_k+s_k]_\Gcali = 0$ for all $k\geq \kbar_2$.  This completes the proof since the choice $i\notin\Scal_*$ was arbitrary and $n_\Gcal$ is finite.
\end{proof}

We now show that, eventually, the set $\Scal_*$ determines the sets $\Ipgk$ and $\Icgk$. 

\begin{lemma}\label{lem:Ipg-Icg-correct}
For all sufficiently large $k$, it holds that 
$$
\Ipgk \equiv \{j\in\Gcali: i\notin\Scal_*\}
\ \ \text{and} \ \
\Icgk \equiv \{j\in\Gcali: i\in\Scal_*\}
$$
where the sets $\Ipgk$ and $\Icgk$ are defined in~\eqref{def:Icgk}.
\end{lemma}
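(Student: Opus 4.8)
The plan is to split the analysis into the inactive groups ($i\notin\Scal_*$), which I expect to be pushed entirely into $\Ipgk$, and the active groups ($i\in\Scal_*$), which I expect to populate $\Icgk$ exactly, driving everything by limits already in hand. First I would record that $x_k\to x_*$ by Theorem~\ref{thm:its-converge}, that $\alpha_k=\alpha_*>0$ for all large $k$ by Lemma~\ref{lem:alpha-fixed} and the $\alpha$-update (as in the opening of the proof of Lemma~\ref{lem:not-in-support}), and that $x_*$ being optimal is equivalent to $T(x_*,\alpha_*)=x_*$ by Lemma~\ref{optimality-measure}. Reading this identity group-wise through~\eqref{def:T} gives, for each $i\in\Scal_*$, the stationarity relation $\nabla_\Gcali(f+r)(x_*)=\nabla_\Gcali f(x_*)+\lambda_i[x_*]_\Gcali/\|[x_*]_\Gcali\|_2=0$, hence $\nabla_\Gcali(f+r)(x_k)\to 0$; this fact does much of the work below.

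For the inactive groups the argument is short. By Lemma~\ref{lem:not-in-support}, for each $i\notin\Scal_*$ and all sufficiently large $k$ we have $[x_k+s_k]_\Gcali=0$, so the second requirement in the definition~\eqref{def:Icgkbar} of $\Icgkbar$ fails and $\Gcali\cap\Icgkbar=\emptyset$; since $\Icgk\subseteq\Icgkbar$, every $j\in\Gcali$ with $i\notin\Scal_*$ then lies in $\Ipgk$. For the active groups I would verify the three conditions in~\eqref{def:Icgkbar}. Two of them are immediate from the limits: $\|[x_k]_\Gcali\|_2\to\|[x_*]_\Gcali\|_2>0$ gives $[x_k]_\Gcali\neq 0$, and the same positivity together with $\nabla_\Gcali(f+r)(x_k)\to 0$ gives $\|[x_k]_\Gcali\|_2\geq\kappa_1\|\nabla_\Gcali(f+r)(x_k)\|_2$ for large $k$. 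The genuinely substantive condition, and the part I expect to be the main obstacle, is $[x_k+s_k]_\Gcali\neq 0$. I would establish it from the shrinkage form in~\eqref{def:T}: since $[T(x_*,\alpha_*)]_\Gcali=[x_*]_\Gcali\neq 0$, the factor $1-\alpha_*\lambda_i/\|[x_*]_\Gcali-\alpha_*\nabla_\Gcali f(x_*)\|_2$ must be strictly positive (its denominator is therefore bounded away from zero, keeping the factor continuous near the limit), and because $\alpha_k=\alpha_*$ eventually and $x_k\to x_*$, the corresponding factor at $x_k$ stays positive, forcing $[x_k+s_k]_\Gcali\neq 0$. Thus $\Gcali\subseteq\Icgkbar$ for all large $k$.

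Combining the two cases shows that, for all large $k$, $\Icgkbar=\{j\in\Gcali:i\in\Scal_*\}$ exactly. It then remains only to exclude the active groups from the screening set $\Ismallk$ in~\eqref{def:Iksmall}. Because $\Icgkbar$ is now precisely the union of active groups, the stationarity relation yields $\|\nabla_{\Icgkbar}(f+r)(x_k)\|_2\to 0$, so $\kappa_2\|\nabla_{\Icgkbar}(f+r)(x_k)\|_2^p\to 0$; as this is eventually strictly smaller than $\|[x_k]_\Gcali\|_2\to\|[x_*]_\Gcali\|_2>0$, the defining inequality of $\Ismallk$ fails for every active group, so $\Gcali\cap\Ismallk=\emptyset$ and $\Gcali\subseteq\Icgk=\Icgkbar\setminus\Ismallk$. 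Together with the inactive-group conclusion this gives $\Icgk=\{j\in\Gcali:i\in\Scal_*\}$, and taking the complement in $\{1,\dots,n\}$ produces $\Ipgk=\{j\in\Gcali:i\notin\Scal_*\}$, as claimed.
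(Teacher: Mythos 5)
Your proposal is correct, but it follows a genuinely different route from the paper. The paper handles the hard direction ($\Ipgk$ contains no active group) by contradiction on a subsequence: it supposes some $\Gcal_{\ibar}$ with $\ibar\in\Scal_*$ stays in $\Ipgk$, rules out one-by-one the first three ways this could happen (the conditions in \eqref{conds-hold-1}), concludes the screening inequality \eqref{conds-hold-2} must hold infinitely often, deduces that $\|\nabla_{\Icgkbar}(f+r)(x_k)\|_2$ stays bounded away from zero, and then---since gradients over active groups vanish---forces an \emph{inactive} group into $\Icgkbar$, contradicting Lemma~\ref{lem:not-in-support}. You instead argue directly: you verify the three defining conditions of $\Icgkbar$ in \eqref{def:Icgkbar} for every active group (your continuity argument on the shrinkage factor in \eqref{def:T} is valid, though it could be shortened by noting $s_k\to 0$, so $[x_k+s_k]_\Gcali\to[x_*]_\Gcali\neq 0$, which is essentially what the paper does inside its contradiction), conclude that $\Icgkbar$ is \emph{exactly} the union of active groups for large $k$, and only then treat $\Ismallk$: since $\nabla_\Gcali(f+r)(x_*)=0$ for all $i\in\Scal_*$ and there are finitely many groups, $\|\nabla_{\Icgkbar}(f+r)(x_k)\|_2\to 0$, so the screening threshold $\kappa_2\|\nabla_{\Icgkbar}(f+r)(x_k)\|_2^p$ vanishes while $\|[x_k]_\Gcali\|_2$ stays bounded away from zero. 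This ordering is the key structural difference: by pinning down $\Icgkbar$ exactly \emph{before} analyzing $\Ismallk$, the screening set becomes trivial to dispose of, whereas the paper never identifies $\Icgkbar$ exactly and therefore must neutralize the screening condition indirectly through the subsequence contradiction. Your argument is shorter and more transparent; both rest on the same ingredients (Theorem~\ref{thm:its-converge}, Lemma~\ref{lem:not-in-support}, Lemma~\ref{lem:alpha-fixed}, and the group-wise stationarity at $x_*$), so nothing extra is assumed.
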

\begin{proof}
Let $\kbar_1$ be large enough so that the conclusion of Lemma~\ref{lem:not-in-support} holds, i.e., if $k\geq \kbar_1$ and $i\notin \Scal_*$, then $[x_k+s_k]_\Gcali = 0$.   Together with~\eqref{def:Icgkbar}, this shows that $\Gcali\cap\Icgkbar = \emptyset$ for all $k \geq \kbar_1$ and $i\notin \Scal_*$, and thus $\Gcali\subseteq\Ipgk$ (see~\eqref{def:Icgk}) for all $k\geq \kbar_1$ and $i\notin\Scal_*$.  In other words, it holds that $\{j\in\Gcali:i\notin\Scal_*\}\subseteq \Ipgk$ for all $k\geq \kbar_1$. 

Next, we prove that there exists $\kbar_2$ such that $\Ipgk \subseteq \{j\in\Gcali:i\notin\Scal_*\}$ for all $k\geq \kbar_2$. For a proof by contradiction, suppose that there exists an infinite subsequence $\Kcal\subseteq\N{}$ and group index $\ibar$ such that $\Gcal_{\ibar} \subseteq\Ipgk$ and $\ibar \in\Scal_*$ for all $k\in\Kcal$.  
Since $\Gcal_{\ibar}\subseteq\Ipgk$ for all $k\in\Kcal$, it follows from~\eqref{def:Icgkbar}, \eqref{def:Iksmall}, and~\eqref{def:Icgk} that at least one of
\begin{align}
[x_k]_{\Gcal_{\ibar}} = 0, \ \
[x_k+s_k]_{\Gcal_{\ibar}} = 0, \ \ 
\|[x_k]_{\Gcal_{\ibar}}\|_2 &< \kappa_1\|\nabla_{\Gcal_{\ibar}}(f+r)(x_k)\|_2\, \ \ 
\text{or} \label{conds-hold-1} \\ 
\|[x_k]_{\Gcal_{\ibar}}\|_2 &< \kappa_2 \|\nabla_{\Icgkbar}(f+r)(x_k)\|_2^p \label{conds-hold-2}
\end{align}
holds for all $k\in\Kcal$. However, since $\ibar\in\Scal_*$, it follows from Theorem~\ref{thm:its-converge} that the first condition in~\eqref{conds-hold-1} does not hold for all sufficiently large $k\in\Kcal$. Also, it follows from Theorem~\ref{thm:its-converge}, the facts that $\chipgk \equiv \|[\spgk]_{\Ipgk}\|_2$ and $\chicgk \equiv \|[\spgk]_{\Icgk}\|_2$, and the fact that $\Icgk \cup \Ipgk = \{1,\dots,n\}$ that $\lim_{k\to\infty} \|s_k\|_2 = 0$, which combined with $\ibar\in\Scal_*$ proves that $[x_k+s_k]_{\Gcal_{\ibar}} \neq 0$ for all sufficiently large $k$.  Hence, the second condition in~\eqref{conds-hold-1} does not hold for all sufficiently large $k\in\Kcal$. Next, from the optimality conditions for problem~\eqref{prob}, the fact that $\ibar\in\Scal_*$, Theorem~\ref{thm:its-converge}, Assumption~\ref{ass.first}, and the fact that $f+r$ is differentiable over the variables in $\Gcal_{\ibar}$ for sufficiently large $k$ that we have $\lim_{k\to\infty} \|\nabla_{\Gcal_{\ibar}} (f+r)(x_k)\|_2 = 0$.  This limit, $[x_*]_{\Gcal_{\ibar}} \neq 0$, and Theorem~\ref{thm:its-converge} show that $\|[x_k]_{\Gcal_{\ibar}}\|_2 \geq \kappa_1\|\nabla_{\Gcal_{\ibar}}(f+r)(x_k)\|_2$ for all sufficiently large $k$, meaning that the third condition in~\eqref{conds-hold-1} does not hold for all sufficiently large $k\in\Kcal$. Therefore, we must conclude that the inequality in~\eqref{conds-hold-2} holds for all sufficiently large $k\in\Kcal$. Combining this with $\ibar\in\Scal_*$ shows that there exists $\epsilon > 0$ such that
\begin{equation}\label{bd-away-cgbar}
\|\nabla_{\Icgkbar}(f+r)(x_k)\|_2 \geq \epsilon > 0
\ \ \text{for all sufficiently large $k\in\Kcal$,} 
\end{equation}
which in particular shows that $\Icgkbar \neq \emptyset$ for all sufficiently large $k \in \Kcal$.  Since the optimality conditions for problem~\eqref{prob} together with  Theorem~\ref{thm:its-converge}, Assumption~\ref{ass.first}, and the fact that $f+r$ is differentiable over the variables in $\Gcal_i$ for sufficiently large $k$ imply that $\lim_{k\to\infty} \|\nabla_{\Gcal_i} (f+r)(x_k)\|_2 = 0$ for all $i\in\Scal_*$, we must conclude from~\eqref{bd-away-cgbar} that, for all sufficiently large $k\in\Kcal$, there exists an $i_k\notin\Scal_*$ such that $\Gcal_{i_k}\subseteq \Icgkbar$. However, Lemma~\ref{lem:not-in-support} yields $[x_k+s_k]_{\Gcal_{i_k}} = 0$ for all sufficiently large $k\in\Kcal$, which together with~\eqref{def:Icgkbar} shows that $\Gcal_{i_k}\nsubseteq \Icgkbar$, which is a contradiction. Therefore, there exists $\kbar_2$ such that $\Ipgk \subseteq \{j\in\Gcali:i\notin\Scal_*\}$ for all $k\geq \kbar_2$.

The conclusions of the two previous paragraphs yields $\Ipgk \equiv \{j\in\Gcali: i\notin\Scal_*\}$ for all sufficiently large $k$. The final assertion, namely that $\Icgk \equiv \{j\in\Gcali: i\in\Scal_*\}$, follows from the fact that $\Ipgk$ and $\Icgk$ partition $\{1,2,\dots,n\}$ for every iteration $k$. 
\end{proof}

The next result shows that, for iterations $k$ sufficiently large, the support of $x_k$ agrees with the support of the solution $x_*$.

\begin{lemma}\label{lem:support-id}
For all sufficiently large $k$, it holds that 
$$
[x_k]_\Gcali \neq 0  \ \text{for all $i\in\Scal_*$}
\ \ \text{and} \ \ 
[x_k]_\Gcali = 0  \ \text{for all $i\notin\Scal_*$.}
$$
\end{lemma}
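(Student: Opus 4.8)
The plan is to treat the two assertions separately, leaning on the support-identification machinery already developed. The claim that $[x_k]_{\Gcali}\neq 0$ for every $i\in\Scal_*$ is the easy half: by Theorem~\ref{thm:its-converge} we have $x_k\to x_*$, and by the definition of $\Scal_*$ in Assumption~\ref{ass.lc} we have $[x_*]_{\Gcali}\neq 0$ for each $i\in\Scal_*$. Since there are only finitely many groups, continuity of the norm yields a single threshold beyond which $[x_k]_{\Gcali}\neq 0$ for all $i\in\Scal_*$ simultaneously.

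For the harder half---that $[x_k]_{\Gcali}=0$ for every $i\notin\Scal_*$ for all large $k$---I would first fix an index $\kbar$ large enough that four facts hold at once: (a) Lemma~\ref{lem:not-in-support} applies, so $[x_k+s_k]_{\Gcali}=0$ for all $i\notin\Scal_*$; (b) Lemma~\ref{lem:Ipg-Icg-correct} applies, so $\Ipgk=\{j\in\Gcali:i\notin\Scal_*\}$ and $\Icgk=\{j\in\Gcali:i\in\Scal_*\}$ are fixed index sets; (c) Assumption~\ref{ass.Ik-all}(ii) is in force, so $\Ik=\Icgk$ when $k\in\setcg$ and $\Ik=\Ipgk$ when $k\in\setpg$; and (d) $k\notin\setpgDEC$, which is legitimate because $|\setpgDEC|\le c^\alpha_\downarrow<\infty$ by Lemma~\ref{lem:alpha-fixed}, so every PG iteration past $\kbar$ lies in $\setpgSAME$ and hence terminates {\sc update\_pg} with $j=0$.

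The engine of the argument is then to track how the subvector $[x_k]_{\Ipgk}$ (the non-support components, a fixed set of indices for $k\ge\kbar$ by (b)) evolves. On a CG iteration, $\Ik=\Icgk$ is disjoint from $\Ipgk$ and Algorithm~\ref{alg:ls-cg} leaves the complement of $\Ik$ untouched, so these components are unchanged. On a PG iteration, (d) gives $x_{k+1}=x_k+P_{\Ical_k}(s_k)$ with $\Ical_k=\Ipgk$, so for each $i\notin\Scal_*$ one obtains $[x_{k+1}]_{\Gcali}=[x_k+s_k]_{\Gcali}=0$ by (a). Thus the non-support components only ever change at PG iterations, and when they do they are reset to exactly zero. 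Finally I would combine this with $\chipgk\to 0$ from Theorem~\ref{thm:its-converge}: using (a) and (b), $\chipgk=\|[s_k]_{\Ipgk}\|_2=\|[x_k]_{\Ipgk}\|_2$ for $k\ge\kbar$, so $\|[x_k]_{\Ipgk}\|_2\to 0$. If some PG iteration occurs at an index $k_0\ge\kbar$, then $[x_{k_0+1}]_{\Gcali}=0$ for all $i\notin\Scal_*$, and by the evolution rule these components stay zero for every subsequent iteration. Otherwise every iteration past $\kbar$ is a CG iteration, $[x_k]_{\Ipgk}$ is frozen at its value at $\kbar$, and $\|[x_k]_{\Ipgk}\|_2\to 0$ forces that frozen value to already be zero. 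Either way the claim follows.

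The main obstacle---and the reason convergence alone does not suffice---is precisely this last step: the limit $\chipgk\to 0$ only drives the non-support components \emph{toward} zero, whereas the lemma asserts they are \emph{exactly} zero. The exact-zeroing must come from the finite-identification mechanism, namely that a PG step with $j=0$ collapses an entire non-support block to zero, and the case split on whether such a step ever recurs after $\kbar$ is what bridges asymptotic vanishing and exact vanishing. Getting (d) correct (so that we may assume $j=0$) and verifying that CG iterations genuinely never perturb the complement of $\Ik$ are the technical points that require care.
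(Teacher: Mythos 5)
Your proof is correct and follows essentially the same route as the paper's: both halves rest on Theorem~\ref{thm:its-converge}, Lemma~\ref{lem:not-in-support}, Lemma~\ref{lem:Ipg-Icg-correct}, Lemma~\ref{lem:alpha-fixed}, and Assumption~\ref{ass.Ik-all}(ii), with the key mechanism being that non-support blocks can only change at PG iterations, where (since the step size parameter no longer decreases) they are set exactly to $[x_k+s_k]_{\Gcal_i}=0$. The only difference is cosmetic: the paper phrases the final step as a proof by contradiction (assuming the block never vanishes and deducing a PG iteration that zeroes it), whereas you use a direct case split on whether a PG iteration occurs after $\kbar$, resolving the frozen-value case via $\chipgk\to 0$; the two arguments are logically equivalent.
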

\begin{proof}
Theorem~\ref{thm:its-converge} shows that $[x_k]_\Gcali \neq 0$ for all sufficiently large $k$ and all $i\in\Scal_*$, which is the first desired result.  Hence, let us proceed by considering arbitrary $i\notin\Scal_*$.  Assumption~\ref{ass.Ik-all}(ii),  Lemma~\ref{lem:not-in-support}, Lemma~\ref{lem:Ipg-Icg-correct}, and Lemma~\ref{lem:alpha-fixed} ensure the existence of an iteration $\kbar$ such that, for all $k\geq \kbar$, the following hold:
\begin{equation}\label{3-conds}
\Gcali \subseteq \Ipgk, \ \ 
[x_k+s_k]_\Gcali = 0, \ \
 \text{and} \ 
\alpha_k = \alpha_{\kbar}.
\end{equation}
We claim that the second desired result follows from \eqref{3-conds} if there exists some sufficiently large $\khat \geq \kbar$ such that $\khat \in \setpg$ and $[x_{\khat+1}]_\Gcali = [x_{\khat} + s_{\khat}]_\Gcali = 0$.  Indeed, since $i$ is an arbitrary element from $\{1,\dots,n_\Gcal\} \setminus \Scal_*$, $n_\Gcal$ is finite, and the second condition in~\eqref{3-conds} shows that values of the variables in $\Gcali$ can only be modified if $k \in \setpg$, the existence of such $\khat$ along with \eqref{3-conds} shows that iteration $\khat \in \setpg$ sets $[x_{\khat+1}]_\Gcali$ to zero, and these variables will remain zero for all future iterations.

Let us now show the existence of such $\khat \geq \kbar$.  We claim that there exists $k \geq \kbar$ such that $[x_k]_\Gcali = 0$.  For a proof by contradiction, suppose that $[x_k]_\Gcali \neq 0$ for all $k\geq \kbar$. Combining this with Theorem~\ref{thm:its-converge}, $i\notin\Scal_*$, and the fact that the variables in $\Gcali$ can have their values changed only if $k\in\setpg$ implies that there exists $\khat\geq \kbar$ such that $\khat\in\setpg$.  Now, since $\khat\in\setpg$ and $\alpha_k = \alpha_{\kbar}$ for all $k\geq\kbar$, it follows from Algorithm~\ref{alg:main} that ${\rm flag}^{\mathrm{pg}}_{\khat} = \samealpha$ is returned in Line~\ref{line:ls-pg}. Using this fact, the update used in Line~\ref{line:update2}, and~\eqref{3-conds} shows that $[x_{\khat+1}]_\Gcali = [x_{\khat} + s_{\khat}]_\Gcali = 0$.
\end{proof}

We require one more lemma that shows that eventually all iterations are in $\setcgSD$.

\begin{lemma}\label{lem:all-kcgsd}
For all $k$ sufficiently large, it holds that $k\in\setcgSD$.
\end{lemma}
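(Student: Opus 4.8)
The plan is to combine the support-identification results already in hand---Lemma~\ref{lem:not-in-support}, Lemma~\ref{lem:Ipg-Icg-correct}, and Lemma~\ref{lem:support-id}---to eliminate every type of iteration except $\setcgSD$ for all sufficiently large $k$. Since $\N{} = \setcg\cup\setpg$ and $\setcg = \setcgZERO\cup\setcgSD$, it suffices to rule out $\setpg$ and $\setcgZERO$ asymptotically, and this is a matter of logical case elimination rather than new analysis.

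First I would show that $\chipgk = 0$ for all sufficiently large $k$, which forces $k\in\setcg$. By Lemma~\ref{lem:support-id}, for large $k$ one has $[x_k]_\Gcali = 0$ for every $i\notin\Scal_*$, and by Lemma~\ref{lem:not-in-support} one also has $[x_k+s_k]_\Gcali = 0$ for every $i\notin\Scal_*$; since $[s_k]_\Gcali = [x_k+s_k]_\Gcali - [x_k]_\Gcali$, it follows that $[s_k]_\Gcali = 0$ for all $i\notin\Scal_*$. By Lemma~\ref{lem:Ipg-Icg-correct}, for large $k$ the set $\Ipgk$ consists precisely of the coordinates lying in groups $\Gcali$ with $i\notin\Scal_*$, so $\chipgk = \|[s_k]_{\Ipgk}\|_2 = 0$. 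Because Assumption~\ref{ass.nonzero} guarantees $\max\{\chicgk,\chipgk\} > 0$, this yields $0 = \chipgk < \chicgk$, so the condition in Line~\ref{line:main.if} holds and $k\in\setcg$.

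Next I would rule out $\setcgZERO$ for large $k$, which leaves only $\setcgSD$. For large $k$ the support of $x_k$ equals $\Scal_*$ exactly by Lemma~\ref{lem:support-id}, and applying the same lemma at iteration $k+1$ gives $[x_{k+1}]_\Gcali \neq 0$ for every $i\in\Scal_*$. On the other hand, any block that is set to zero during a $\setcg$ iteration must lie in $\Ical_k\subseteq\Icgk$, hence in a group $\Gcali$ with $i\in\Scal_*$ by Lemma~\ref{lem:Ipg-Icg-correct}; setting such a block to zero at $x_{k+1}$ would contradict $[x_{k+1}]_\Gcali \neq 0$. Therefore the characterization of $\setcgZERO$ iterations in Lemma~\ref{lem:dec-cg-wp}(i) (namely, creating at least one new block of zeros) cannot hold, so $k\notin\setcgZERO$ and thus $k\in\setcgSD$.

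The main obstacle is purely bookkeeping: the conclusion requires Lemma~\ref{lem:not-in-support}, Lemma~\ref{lem:Ipg-Icg-correct}, and Lemma~\ref{lem:support-id} to hold \emph{simultaneously}, so I would take the maximum of the several ``sufficiently large $k$'' thresholds, and I would be careful to invoke the support-identification conclusion at both indices $k$ and $k+1$ (the latter being needed to forbid the formation of a new zero block). No genuinely analytic estimate remains once the identification lemmas are available; the argument closes by the disjoint decompositions $\N{} = \setcg\cup\setpg$ and $\setcg = \setcgZERO\cup\setcgSD$.
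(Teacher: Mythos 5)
Your proposal is correct, and its first half (forcing $k\in\setcg$ for all large $k$ via $\chipgk=0$ and Assumption~\ref{ass.nonzero}) is exactly the paper's argument. Where you diverge is in ruling out $\setcgZERO$: the paper uses a counting argument---once all iterations lie in $\setcg$, each $k\in\setcgZERO$ fixes at least one \emph{new} group to zero (Lemma~\ref{lem:dec-cg-wp}(i)), and since zeroed groups cannot be revived by $\setcg$ iterations (only blocks in $\Ical_k\subseteq\Icgk$, which are nonzero, get modified), at most $n_\Gcal-1$ such iterations can occur after the threshold $\kbar$. You instead derive an outright contradiction: by Lemma~\ref{lem:support-id} applied at both $k$ and $k+1$, the supports of consecutive iterates both equal $\Scal_*$ for large $k$, while a $\setcgZERO$ iteration would zero a block of $\Ical_k\subseteq\Icgk$, i.e.\ (by Lemma~\ref{lem:Ipg-Icg-correct}) a group indexed in $\Scal_*$, contradicting $[x_{k+1}]_\Gcali\neq 0$. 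Both arguments are valid and of comparable length. Yours yields the slightly sharper statement that beyond a single threshold \emph{no} $\setcgZERO$ iteration occurs at all, and it cleanly reuses the identification machinery; the paper's counting argument is marginally more self-contained in that it does not need to invoke support identification at the shifted index $k+1$, relying only on the algorithm's mechanics and the pigeonhole bound $n_\Gcal<\infty$. Your explicit remark about taking the maximum of the several ``sufficiently large $k$'' thresholds, and about needing Lemma~\ref{lem:support-id} at index $k+1$, addresses the only bookkeeping subtlety in your variant.
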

\begin{proof}
We first show that all sufficiently large $k$ are in $\setcg$. It follows from Lemma~\ref{lem:Ipg-Icg-correct} that $\Ipgk \equiv \{j\in\Gcali: i\notin\Scal_*\}$ for all sufficiently large $k$.  Combining this with Lemma~\ref{lem:support-id} and Lemma~\ref{lem:not-in-support} shows that there exists an iteration $\kbar$ such that $[x_k]_{\Ipgk} = 0$ and $[x_k+s_k]_{\Ipgk} = 0$ for all $k\geq\kbar$, which means that $\chipgk = \|[s_k]_{\Ipgk}\|_2 = 0$ for all $k\geq \kbar$. It follows from this fact, Line~\ref{line:main.if}, and Assumption~\ref{ass.nonzero} that $k\in\setcg$ for all $k \geq \kbar$. Now, notice that at most $n_\Gcal-1$ iterations from $\kbar$ onward can be in $\setcgZERO$ because of Lemma~\ref{lem:dec-cg-wp}(i). (Every iteration $k\in\setcgZERO$ fixes at least one new group of variables to zero and if they ever all become zero so that $\Icgk = \emptyset$, then the contradiction $k\in\setpg$ is reached.)  Therefore, it follows that all sufficiently large $k$ must be in $\setcgSD$.
\end{proof}

We can now state our main local convergence result.

\begin{theorem}\label{th.daniel_magic}
If in Algorithm~\ref{alg:cg-step} we choose either $q\in(1,2]$, or $q = 1$ and $\{\mu_k\} \to 0$, then $\{x_k\} \to x_*$ at a superlinear rate.  In particular, if we choose $q = 2$, then the rate of convergence is quadratic.
\end{theorem}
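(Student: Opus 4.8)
The plan is to show that for all sufficiently large $k$ the iteration of Algorithm~\ref{alg:main} reduces to a reduced-space inexact-Newton step on the smooth restriction of $F := f+r$ to the optimal support, that this step is eventually accepted at unit length, and hence that $\{x_k\}$ inherits the classical fast local rates of inexact Newton methods. First I would collect the consequences of the preceding lemmas: by Lemma~\ref{lem:all-kcgsd} there is an iteration past which every $k$ lies in $\setcgSD$, so $x_{k+1}$ is produced by the sufficient-decrease loop (Line~\ref{line:update-cg-loop}) of Algorithm~\ref{alg:ls-cg}; by Lemma~\ref{lem:Ipg-Icg-correct} and Assumption~\ref{ass.Ik-all}(ii), for such $k$ we have $\Ik = \Icgk = \{j\in\Gcali : i\in\Scal_*\}$; and by Lemma~\ref{lem:support-id} the iterate $x_k$ has exactly the support $\Scal_*$. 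Thus all progress occurs in the fixed subspace indexed by $\Scal_*$, on which $\nabla^2 F = \nabla^2 f + \nabla^2 r$ exists and, since each active group obeys $\|[x_k]_\Gcali\|_2 \to \|[x_*]_\Gcali\|_2 > 0$, the reduced Hessian of $r$ is well defined and Lipschitz near $x_*$. Combining Assumption~\ref{ass.lc} (strong convexity of $f$ and Lipschitz $\nabla^2 f$) with convexity of $r$ then shows that $\nabla^2_{\Ik\Ik}F$ is uniformly positive definite and Lipschitz in a neighbourhood of $x_*$, so by Assumption~\ref{ass.Ik-all}(iii) the matrix $H_k = \nabla^2_{\Ik\Ik}F(x_k)$ is the \emph{exact} reduced Hessian and condition~\eqref{line:dbar-3} makes $\dbar_k$ an inexact Newton step with residual $\|H_k\dbar_k + g_k\|_2 \leq \mu_k\|g_k\|_2^q$, where $g_k = \nabla_{\Ik}F(x_k)$.

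Next I would show the line search eventually takes the full step with no projection. Because $\|\nabla_\Gcali F(x_k)\|_2 \to 0$ for each $i\in\Scal_*$, the radius $\rho_{k,i}$ in~\eqref{def:rhoki}, and hence $\rhobar_{k,i}$, tends to zero, whereas $\|[x_k]_\Gcali\|_2$ stays bounded away from zero and $d_k \to 0$; consequently the ray $\{[x_k+\tau d_k]_\Gcali : \tau\geq 0\}$ misses the shrinking ball for $\tau$ near $1$, so $\tau_{k,i} = \infty$ and $\tau_k > 1$ for all large $k$. The \textbf{while} loop of Algorithm~\ref{alg:ls-cg} is therefore skipped and the search begins at $j = 0$ with $y_0 = x_k + d_k$. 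A Taylor expansion together with $H_k\dbar_k = -g_k + r_k$, $\|r_k\|_2 \leq \mu_k\|g_k\|_2^q$, and Lipschitz continuity of $\nabla^2_{\Ik\Ik}F$ gives
$$
F(x_k + d_k) - F(x_k) \leq \thalf g_k^T\dbar_k + \thalf r_k^T\dbar_k + o(\|g_k\|_2^2),
$$
and since $g_k^T\dbar_k \leq g_k^T d_k^R \leq -\tfrac{1}{\mumax}\|g_k\|_2^2 < 0$ by~\eqref{line:dbar-1} and Lemma~\ref{lem:is-descent}(i) while the remaining terms are of higher order, for any $\eta\in(0,1/2)$ (Assumption~\ref{ass.Ik-all}(i)) the Armijo test in Line~\ref{line:ls-phi-SD} holds at $j=0$ for all large $k$. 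Hence eventually $x_{k+1} = x_k + d_k$.

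With unit steps taken the iteration becomes the pure inexact Newton recursion $[x_{k+1}]_{\Ik} = [x_k]_{\Ik} - H_k^{-1}g_k + H_k^{-1}r_k$ on the subspace where $\nabla_{\Ik}F(x_*) = 0$. Writing
$$
x_{k+1} - x_* = H_k^{-1}\big(H_k(x_k - x_*) - g_k\big) + H_k^{-1}r_k,
$$
bounding the first term by $O(\|x_k-x_*\|_2^2)$ via Lipschitz continuity of $\nabla^2_{\Ik\Ik}F$ and the uniform bound on $H_k^{-1}$, and using $\|g_k\|_2 = \Theta(\|x_k-x_*\|_2)$ from strong convexity, I obtain
$$
\|x_{k+1} - x_*\|_2 \leq C_1\|x_k - x_*\|_2^2 + C_2\mu_k\|x_k - x_*\|_2^q.
$$
The three cases then follow: $q\in(1,2]$ yields order-$q$ superlinear convergence (quadratic for $q=2$, with $\{\mu_k\}$ merely bounded), while $q=1$ with $\mu_k\to 0$ makes the second term $o(\|x_k-x_*\|_2)$ and hence gives superlinear convergence.

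I expect the crux to be the middle step: simultaneously proving that the projection mechanism deactivates and that the full step passes the Armijo test. One must weigh the shrinking projection radii $\rhobar_{k,i}$ against the fixed active-group magnitudes, and then carry the inexact-Newton residual through the Taylor expansion precisely enough that the leading coefficient $\thalf$ strictly beats $\eta$. This is exactly the transition-to-fast-convergence argument, and it is where the hypotheses $\eta < 1/2$ and $H_k = \nabla^2_{\Ik\Ik}F(x_k)$ are indispensable.
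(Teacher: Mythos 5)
Your proposal is correct and follows the same skeleton as the paper's proof: Lemmas~\ref{lem:Ipg-Icg-correct}, \ref{lem:support-id}, and \ref{lem:all-kcgsd} reduce the iteration, for all large $k$, to an inexact Newton method on the fixed support subspace $\{j\in\Gcali : i\in\Scal_*\}$, after which fast local convergence follows. The difference lies in how the second stage is discharged: the paper simply cites \cite[Theorem~3.3]{dembo1982inexact} together with the ``well-known'' fact that a backtracking Armijo search with $\eta\in(0,1/2)$ asymptotically accepts the unit step, whereas you prove both ingredients explicitly---the deactivation of the projection mechanism, the unit-step acceptance via a Taylor expansion that carries the inexact-Newton residual $r_k$, and the error recursion $\|x_{k+1}-x_*\|_2\le C_1\|x_k-x_*\|_2^2 + C_2\mu_k\|x_k-x_*\|_2^q$ from which the three cases follow. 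Your version is therefore more self-contained, and it addresses a point the paper's proof glosses over: for the unit step even to be \emph{tried}, the \textbf{while} loop of Algorithm~\ref{alg:ls-cg} must be skipped, which requires $\tau_k>1$; the paper's recurrence $x_{k+1}=x_k+\xi^{j_k}d_k$ with $j_k$ eventually zero implicitly presumes this. One small repair is needed in that step: your claim that the ray $\{[x_k+\tau d_k]_\Gcali : \tau\ge 0\}$ misses the ball, so that $\tau_{k,i}=\infty$, does not follow from your estimates (if $[d_k]_\Gcali$ points nearly at the origin, the ray can intersect the ball at some large $\tau$ no matter how small $\|d_k\|_2$ is). What your estimates actually give---$\|[x_k]_\Gcali\|_2$ bounded away from zero, $\rhobar_{k,i}\to 0$, $\|d_k\|_2\to 0$, hence $\|[x_k+\tau d_k]_\Gcali\|_2 \ge \|[x_k]_\Gcali\|_2 - \|[d_k]_\Gcali\|_2 > \rhobar_{k,i}$ for all $\tau\in[0,1]$---is the weaker statement $\tau_{k,i}>1$ for every $\Gcali\subseteq\Ik$, and that is all your argument needs.
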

\begin{proof}
It follows from Lemma~\ref{lem:Ipg-Icg-correct}, Lemma~\ref{lem:support-id}, and Lemma~\ref{lem:all-kcgsd} that, for all sufficiently large $k$, the iterates generated by Algorithm~\ref{alg:main} satisfy the recurrence $x_{k+1} = x_k + \xi^{j_k} d_k$, where $j_k$ is the result of the backtracking Armijo line search in Line~\ref{line:ls-phi-SD}, $\|[x_k]_{\Ipgk}\|_2 = \|[d_k]_{\Ipgk}\|_2 = 0$, and $[d_k]_{\Icgk} = \dbar_k$ with $\dbar_k$ computed by Algorithm~\ref{alg:cg-step} to satisfy~\eqref{line:dbar-3}. In other words, for all sufficiently large $k$, we have $[x_k]_{\Ipgk} = [x_*]_{\Ipgk} = 0$ and the values of the variables in $\Icgk \equiv \{j\in\Gcal_i: i\in\Scal_*\}$ are updated exactly as those of an inexact Newton method for computing a root of $\nabla_{\Icgk}(f + r)$. Since, by Theorem~\ref{thm:its-converge}, we have $\lim_{k\to\infty} x_k = x_*$, the desired conclusions follow under the stated conditions from~\cite[Theorem~3.3]{dembo1982inexact} and noting the well-known result that the unit step size $\xi^{j_k} = 1$ is accepted (asymptotically) by a backtracking Armijo line search when $\eta\in(0,1/2)$ (see Assumption~\ref{ass.Ik-all}) under our assumptions.
\end{proof}

Theorem~\ref{th.daniel_magic} states conditions under which Algorithm~\ref{alg:main} yields a superlinear, or even quadratic, rate of local convergence.  The neighborhood about $x_*$ in which such a rate will be achieved, and the explicit constants in the convergence rate that will be achieved, depend as usual on magnitudes of a Lipschitz constant for $\nabla_{\Ical_*} (f+r)$ and an upper bound on a norm of the inverse of $\nabla_{\Ical_*}^2 (f+r)$, where $\Ical_* := \{j \in \Gcali : i \in \Scal_*\}$.  Due to the properties of the regularizer $r$, the latter of these values may be inversely proportional to the norms of the groups of variables in the support at the solution.

%*********
% Section
%*********
\section{Numerical Results}\label{sec.numerical}

In this section, we present the results of numerical experiments with an implementation of \GroupFaRSA{} (Algorithm~\ref{alg:main}) applied to solve a collection of group sparse regularized logistic regression problems of the form
\begin{equation}\label{logit}
\min_{x \in \mathbb{R}^{n}} \frac{1}{N} \sum_{i=1}^{N} \log \left(1+e^{-y_{i} x^{T} d_{i}}\right)+\sum_{i=1}^{n_\Gcal} \lambda_i\left\|[x]_{\Gcal_{i}}\right\|_{2},
\end{equation}
where $d_i\in\R{n}$ is the $i$th data point, $N$ is the number of data points in the data set, $y_i\in \{-1, 1\}$ is the class label for the $i$th data point, and $\lambda_i$ is the weight parameter for the $i$th group.  We first describe details of our implementation, then describe the data sets considered in our experiments, and finally present our experimental results.

\subsection{Implementation details} We have developed a Python implementation of \GroupFaRSA{} that is available upon request. 
The values of the input parameters for Algorithm~\ref{alg:main} and Algorithm~\ref{alg:cg-step} that we used are given in \autoref{tab:params} (with some caveats that are mentioned in the following paragraph).

\begin{wrapfigure}{r}{0.44\textwidth}
\begin{tabular}{|cc|cc|}
\hline
param. & value                      & param.  & value     \\ \hline
$\varphi$    & $1$                        & $\kappa_1$ & $0.1$     \\
$\xi$     & $0.5$                      & $\kappa_2$ & $10^{-2}$ \\
$\eta$    & $10^{-3}$                  & $\theta$   & $\pi/4$   \\
$\zeta$   & $0.8$                      & $q$        & $1$     \\
$p$       & 2                          & $\mu_k$    & $1$     \\\hline
\end{tabular}
\caption{Parameter values used in our tests for Algorithm~\ref{alg:main} and Algorithm~\ref{alg:cg-step}. \label{tab:params}}
\vspace*{-0.3cm}
\end{wrapfigure}
We initialized $x_0$ as the zero vector and $\alpha_0$ as an estimate of the inverse of the Lipschitz constant of $f$ at $x_0$. To be precise, our software randomly generated a vector $y_0\in\R{n}$ such that $\|x_0-y_0\|_2 = 10^{-8}$, and then set $\alpha_0= \min\{1,\|x_0-y_0\|_2/\|\nabla f(x_0)-\nabla f(y_0)\|_2\}$.  Since $\varphi=1$, it follows from Algorithm~\ref{alg:main} that~\eqref{Ik-is-all} holds for all $k\in\N{}$. 
(However, for data sets with $N < n$, we initially chose $\varphi = 0.8$ and switched to $\varphi = 1$ when an iteration in $\setcg$ satisfied $f(x_k) - f(x_{k+1}) \leq 10^{-3}$. When $N < n$, the matrix $\nabla^2 f(x_k)$ is singular, which in practice often led to large CG directions and multiple backtracks in the line search. These ill effects were partly remedied by this scheme for updating~$\varphi$.) 
When defining the set $\Ismallk$ in~\eqref{def:Iksmall}, we used $\tilde\kappa_{2,i}=\kappa_2|\Gcal_i| / \|\Icgkbar\|$ in place of $\kappa_2$ for all $i$ such that $\Gcal_i\subseteq\Icgkbar$. This choice accounted for the fact that the two different norms in~\eqref{def:Iksmall} are associated with vectors of different dimension.  Note that since $(1/n)\kappa_2 \leq \tilde\kappa_{2,i} \leq n \kappa_2$, this choice is easily incorporated into the analysis in Section~\ref{sec.analysis}. 
The choice of $H_k$ in Line~\ref{line:gH} was based on a regularization of the exact second-derivatives of $f$.
In particular, for any scalar $\delta \geq 0$, consider
$$
\tfrac{1}{N} D^T \Sigma_\delta(x) D
\approx \nabla^2 f(x)
$$
where $D^T := [d_1, d_2, \cdots, d_N]$ and $\Sigma_\delta(x)$ is the diagonal matrix with $i$th diagonal entry
$$
[\Sigma_\delta(x)]_{ii} := \max\{\sigma_i(x)(1-\sigma_i(x)),\delta\} 
\ \ \text{with} \ \ 
\sigma_i(x) := \exp(y_id_i^T x) / \big( 1 + \exp(y_id_i^T x) \big)
$$
for all $i\in \{1,2,\cdots, N\}$. Notice that if $\delta = 0$, then $(1/N) D^T \Sigma_0(x) D \equiv \nabla^2 f(x)$.  In order to use a small amount of regularization in our tests, we chose $\delta = 10^{-8}$.  With this choice of $\delta$, our choice of $H_k$ in Line~\ref{line:gH} can now be written as 
$$
H_k \gets [\tfrac{1}{N}D^T \Sigma_\delta(x_k)D]_{\Ik\Ik}  +  \nabla^2_{\Ik\Ik}r(x_k),
$$
where we remind the reader that $\nabla^2_{\Ik\Ik}r(x_k)$ is well defined because the construction of $\Ik\subseteq\Icgk$ ensures that $[x_k]_\Gcali \neq 0$ for all $\Gcali \subseteq\Ik$.

In Algorithm~\ref{alg:cg-step}, we applied the CG method to the system $H_kd = -g_k$ to approximately solve the optimization problem defined in Line~\ref{line:dbar}. As pointed out in Section~\ref{sec.alg-cgstep}, the direction associated with every iteration of the CG algorithm satisfies condition~\eqref{line:dbar-1} and condition~\eqref{line:dbar-2}, which were required to establish the complexity result in Theorem~\ref{thm:complexity}. 
To reduce the cost of the CG computation and limit the number of backtracking steps required by Algorithm~\ref{alg:ls-cg}, we terminated Algorithm~\ref{alg:cg-step} when at least one of three conditions was satisfied. To describe these conditions checked during the $k$th iteration, let $d_{j,k}$ denote the $j$th CG iterate and let $t_{j,k}:=\|H_kd_{j,k}+g_k\|_2$ denote the $j$th CG residual. The three conditions are given by
\begin{subequations}
\begin{align}
 t_{j,k} &\leq \max\{\min\left\{0.1t_{0,k}, t_{0,k}^{1.5}\right\}  , 10^{-10}\}, \label{eq:cgtarget}\\
 \|d_{j,k}\| &\geq 10^3\min\{1, \|\nabla_{\Ical_k}(f+r)(x_k)\|_2\}, \ \text{and} \label{eq:cgbig}\\
 j &= |\Ical_k| \label{eq:cgmax}.
\end{align}
\end{subequations}
Outcome~\eqref{eq:cgtarget} is the ideal termination condition since it indicates that  the residual of the linear system has been sufficiently reduced (see \eqref{line:dbar-3}). Outcome~\eqref{eq:cgbig} serves as a trust-region constraint on the norm of the trial step $d_k$; in particular, when the inequality in~\eqref{eq:cgbig} holds, the size of the CG iterate $d_{j,k}$ is relatively large, indicating that $x_k$ is not close to an optimal solution.  Therefore, we restrict its size with the intent of needing fewer backtracking steps during the subsequent line search. Outcome~\eqref{eq:cgmax} caps the number of CG iterations to $|\Ik|$ (the size of the reduced space) since, in exact arithmetic, CG converges to an exact solution in at most $|\Ik|$ iterations.

Algorithm~\ref{alg:main} decreases the value of the PG parameter (see Line~\ref{line:alpha-decrease}) for the next iteration using a simple multiplicative factor when $\flagpgk = \decalpha$.  However, in practice, we found an adaptation of the approach in~\cite{curtis2019exploiting} to be more efficient. To describe this approach, let $d_k$ and $\xi^{j_k}$ be the search direction and step size used to obtain $x_{k+1}=x_k+\xi^{j_k}d_k$. It is well known~\cite[Lemma~5.7]{beck2017first} that if $\alpha \in(0,1/L_f]$, then
$f(x_{k+1}) 
\leq f(x_k) + \xi^{j_k}\nabla f(x_k)^T d_k + \tfrac{1}{2\alpha}\|\xi^{j_k} d_k\|_2^2$.
Setting this inequality to be an equality and then solving for $\alpha$, one obtains 
$$
\hat \alpha_k 
:= \frac{\|\xi^{j_k}d_k\|_2^2}{2\left(f(x_{k+1})-f(x_k) - \xi^{j_k}\nabla f(x_k)^T d_k\right)}, 
$$
which can be viewed as a local Lipschitz constant estimate for $f$ at $x_k$. 
In our tests, we updated the PG parameter at the end of each iteration of Algorithm~\ref{alg:main} as 
\begin{equation}\label{alpha-new-good}
\alpha_{k+1} \gets \min\left\{1, \hat \alpha_k/ 2 \right\}.
\end{equation}
Although this PG parameter update strategy worked better than the basic strategy in Algorithm~\ref{alg:main} (see Line~\ref{line:alpha-decrease} and Line~\ref{line:alpha-same}), 
it is not covered by our analysis in Section~\ref{sec.analysis}. However, a simple modification of our analysis would be to allow the update in~\eqref{alpha-new-good} to increase the PG parameter at most a finite number of times, say $100$ times, at which point the update $\alpha_{k+1} \gets \min\left\{\alpha_k, \hat \alpha_k/ 2 \right\} \leq \alpha_k$ would be used.  This strategy is covered by our earlier analysis (with a larger constant in the complexity result).

We terminate our algorithm when
$\max\{\chicgk, \chipgk\} \leq 10^{-6}\max\{\chi^{\mathrm{cg}}_0, \chi^{\mathrm{pg}}_0, 1\}$.

\subsection{Data sets} \label{sec.datasets}

We tested \GroupFaRSA{} on problem \eqref{logit} using data sets from the LIBSVM repository.\footnote{\url{https://www.csie.ntu.edu.tw/cjlin/libsvmtools/datasets}} From this repository, we excluded all regression instances and multiple-class (greater than two) classification instances. We compared the performance of our algorithm to the well-cited package \gglasso{}~\cite{Yang2015}, which is a state-of-the-art group-wise majorization descent method.\footnote{\url{https://cran.r-project.org/web/packages/gglasso}}  Since \gglasso{} does not support sparse data matrix inputs, we excluded all data sets that were too large to be stored in memory (6GB). Finally, for the adult data (a1a--a9a) and webpage data (w1a--w8a), we used only the largest instances, namely a9a and w8a. This left us with our final subset of $25$ data sets that can be found in \autoref{tab:test-db}.

Scaling of the data sets can be important. If the LIBSVM website indicated that a  data set was already scaled, then we used the data set without modification.  However, when the website did not indicate that scaling for a data set was used, we scaled each column of the feature data (i.e., feature-wise scaling) into the range $[-1,1]$ by dividing each of its entries by the largest entry in absolute value.  Labels for some data sets (e.g., breast-cancer, covtype, liver-disorders, mushrooms, phishing, skin-nonskin and svmguide1) do not take values in $\{-1,1\}$, but rather in $\{0,1\}$ or $\{1,2\}$. For these data sets, we mapped the smaller label to $-1$ and the larger label to $1$.

\begin{table}[!th]
\small
\centering
\caption{The first column (data set) gives the name of the data set.  The second column (N) and third column (n) indicate the number of data points and problem dimension, respectively.  The fourth column (scale) provides the feature-wise scaling used: each feature is either scaled into the given interval or scaled to have mean zero ($\mu = 0$) and variance one ($\sigma^2=1$).  The fifth column (who) indicates whether the data set came pre-scaled from the LIBSVM website (website), or it did not come pre-scaled and we scaled it (us) as described in Section~\ref{sec.datasets}.  Finally, the sixth column (used) indicates the number of problem instances used in the numerical results presented in Figure~\ref{fig:ppf}.}
\label{tab:test-db}
\begin{tabular}{|l|cc|cc|c|}
\hline
data set           & N      & n    & scale               & who     & used \\ \hline
a9a                & 32561  & 123  & {[}0,1{]}           & website & 8    \\
australian         & 690    & 140  & {[}-1,1{]}          & website & 2    \\
breast-cancer      & 683    & 10   & {[}-1,1{]}          & website & 0    \\
cod-rna            & 59535  & 8    & {[}-1,1{]}          & us      & 8    \\
colon-cancer       & 62     & 2000 & $(\mu,\sigma^2)= (0,1)$ & website & 8    \\
covtype.binary     & 581012 & 54   & {[}0,1{]}           & website & 8    \\
diabetes           & 768    & 8    & {[}-1,1{]}          & website & 0    \\
duke breast-cancer & 44     & 7192 & $(\mu,\sigma^2)= (0,1)$ & website & 8    \\
fourclass          & 862    & 2    & {[}-1,1{]}          & website & 0    \\
german-numer       & 1000   & 24   & {[}-1,1{]}          & website & 0    \\
gisette            & 6000   & 5000 & {[}-1,1{]}          & website & 8    \\
heart              & 270    & 13   & {[}-1,1{]}          & website & 2    \\
ijcnn1             & 49990  & 22   & {[}-1.5, 1.5{]}     & website & 8    \\
ionosphere         & 351    & 34   & {[}-1,1{]}          & website & 0    \\
leukemia           & 38     & 7129 & $(\mu,\sigma^2)= (0,1)$ & website & 8    \\
liver-disorders    & 145    & 5    & {[}-1,1{]}          & website & 0    \\
madelon            & 2000   & 500  & {[}-1,1{]}          & us      & 8    \\
mushrooms          & 8124   & 112  & {[}0,1{]}           & website & 6    \\
phishing           & 11055  & 68   & {[}0,1{]}           & website & 7    \\
skin-nonskin       & 245057 & 3    & {[}-1,1{]}          & us      & 8    \\
splice             & 1000   & 60   & {[}-1,1{]}          & website & 0    \\
sonar              & 208    & 60   & {[}-1,1{]}          & website & 4    \\
svmguide1          & 3089   & 4    & {[}-1,1{]}          & us      & 0    \\
svmguide3          & 1243   & 21   & {[}-1,1{]}          & website & 0    \\
w8a                & 49749  & 300  & {[}0,1{]}           & website & 8   \\
\hline
\end{tabular}
\end{table}

\subsection{Experimental setup and test results} \label{sec.results}

We tested \GroupFaRSA{} and \gglasso{} for solving problem~\eqref{logit} using the data sets in \autoref{tab:test-db}.  All default settings for \gglasso{} were used, including the same starting point $x_0 = 0$ used by \GroupFaRSA{}. We considered four group structures and two different solution sparsity levels.  Specifically, we considered the four different numbers of groups 
$$
\text{number of groups} \in \{\lfloor 0.25n\rfloor, \lfloor 0.50n\rfloor, \lfloor 0.75n\rfloor, n\},
$$
where $n$ is the problem dimension; notice that the last setting recovers $\ell_1$-norm regularization. Then, for a given number of groups, the variables were sequentially distributed (as evenly as possible) to the groups; e.g., $10$ variables among $3$ groups would have been distributed as $\Gcal_1 = \{1,2,3\}$, $\Gcal_2 = \{4,5,6\}$, and $\Gcal_3 = \{7,8,9,10\}$. For the two different solution sparsity levels, we considered groups weights
$$
\lambda_i= 0.1\lambda_{\min}\sqrt{|\Gcal_i|}
\ \ \text{and} \ \ 
\lambda_i= 0.01\lambda_{\min}\sqrt{|\Gcal_i|}
$$
where 
$\lambda_{\min}
= \min \big\{\lambda \geq 0 : \text{the solution to~\eqref{logit} with $\lambda_{i}= \lambda \sqrt{\left|\mathcal{G}_{i}\right|}$ is $x=0$} \big\}$ 
(see~\cite[equation~(23)]{Yang2015}).  Since there were $25$ data sets, a total of $200$ problem instances were tested (each data set has $8$ instances).  The experiments were conducted using the cluster in the Computational Optimization Research Laboratory (COR@L) at Lehigh University with an AMD Opteron Processor 6128 2.0 GHz CPU.  In the following paragraphs, we compared the performance of \GroupFaRSA{} with that of \gglasso{} with respect to CPU time (seconds), final objective value, and solution sparsity.

\ifreport
  % Do nothing
\else
  % Place the figure here.
\begin{wrapfigure}{r}{0.5\textwidth}
  \begin{center}
    \includegraphics[width=0.49\textwidth,trim = 0 20 30 30,clip]{./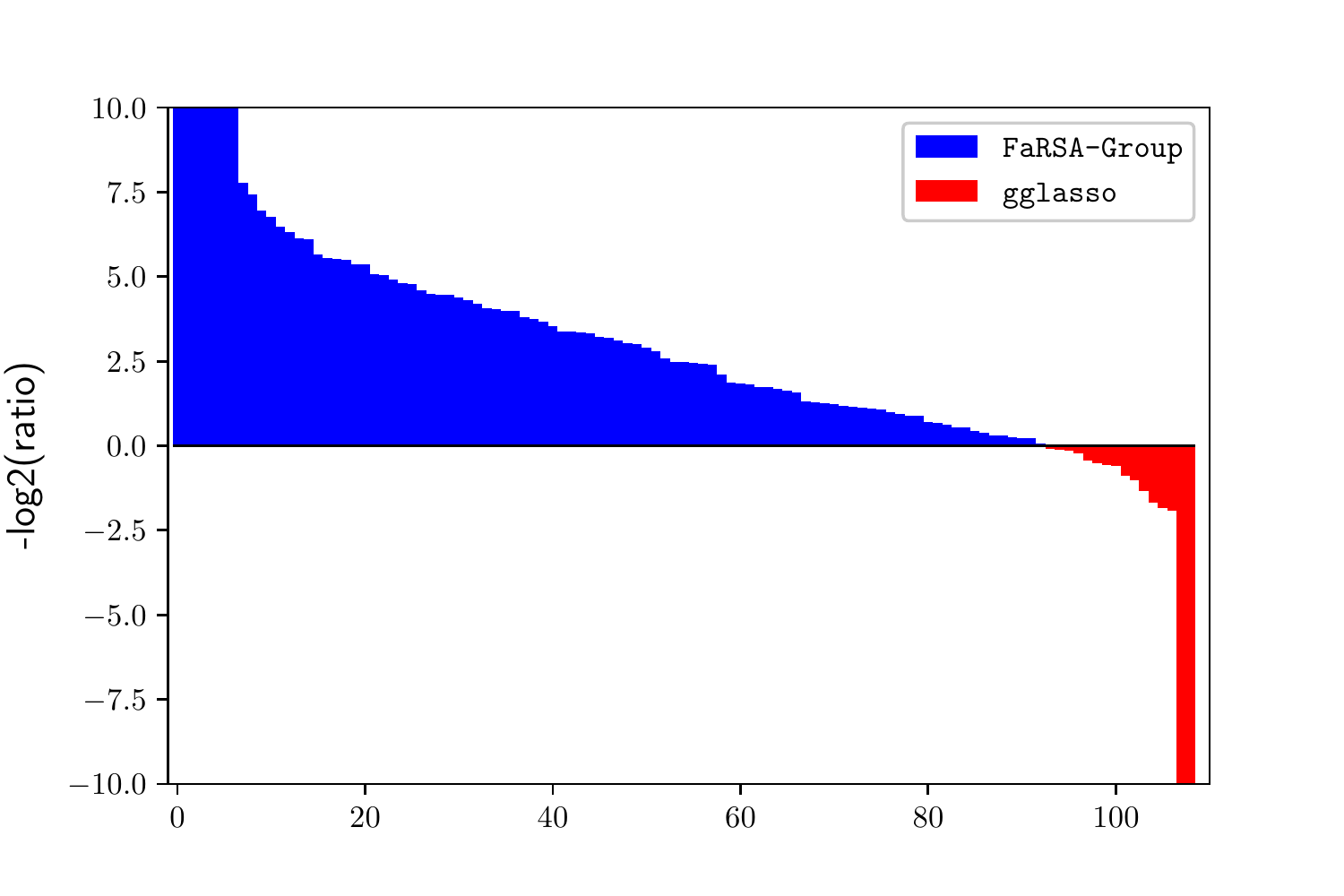}
  \end{center}
  \caption{Performance profile for CPU time (seconds). \GroupFaRSA{} outperforms \gglasso{} on $93$ of the $109$ problem instances. For each problem instance, the height of the bar is given by~\eqref{measure}. \label{fig:ppf}}
  \vspace*{-0.2cm}
\end{wrapfigure}
\fi
First consider the CPU time.  For each problem instance, we allowed a maximum of  1000 seconds. If the CPU time in a run went above this limit, we terminated that run and considered the algorithm to have failed.  Out of the $200$ problem instances, \GroupFaRSA{} failed $2$ times and \gglasso{} failed $7$ times. \autoref{fig:ppf} illustrates a performance profile based on~\cite{morales2002numerical} for comparing the computing times on problem instances that \GroupFaRSA{} and/or \gglasso{} took at least $1$ second to terminate; this resulted in $109$ problem instances.  The last column of \autoref{tab:test-db} gives  the number of instances for each data set used in this profile. Each bar in the plot corresponds to a problem instance, with the height of the bar given by 
\begin{equation}\label{measure}
-\log_2
\left(
\frac{\text{time required by \GroupFaRSA{}}}{\text{time required by \gglasso{}}}
\right).
\end{equation}
Therefore, an upward pointing bar indicates that \GroupFaRSA{} took less time to find the optimal solution for that problem instance and a downward pointing bar means that \gglasso{} took less time, and in either case the size of the bar indicates the magnitude of the outperformance factor. 
A bar that reaches the y-axis limit of $\pm 10$ is used when indicating that an algorithm was successful when solving a problem instance while the competing algorithm was unsuccessful.  

\ifreport
  % Place the figure here.
  \begin{wrapfigure}{r}{0.5\textwidth}
  \begin{center}
    \includegraphics[width=0.49\textwidth,trim = 0 20 30 30,clip]{./FaRSA-Group-gglasso-eps-converted-to.pdf}
  \end{center}
  \caption{Performance profile for CPU time (seconds). \GroupFaRSA{} outperforms \gglasso{} on $93$ of the $109$ problem instances. For each problem instance, the height of the bar is given by~\eqref{measure}. \label{fig:ppf}}
  \vspace*{-0.2cm}
  \end{wrapfigure}
\else
   % Do nothing.
\fi
To compare final objective function values, let $F_{\text{\GroupFaRSA{}}}$ and $F_{\text{\gglasso}}$ denote (for a given problem instance) the objective values returned by \GroupFaRSA{} and \gglasso{}, respectively. 
If $F_{\text{\gglasso}} - F_{\text{\GroupFaRSA{}}} > 10^{-8}$, then we considered  \GroupFaRSA{} to have obtained a lower objective function value; if  $F_{\text{\GroupFaRSA}} - F_{\text{\gglasso}} > 10^{-8}$, then we considered \gglasso{} to have obtained a lower objective function value; and if $|F_{\text{\GroupFaRSA}} - F_{\text{\gglasso}}|\leq 10^{-8}$, then we considered them to have performed equally. From the $109$ problem instances that at least one algorithm took at least one second to terminate, \GroupFaRSA{} outperformed \gglasso{} $95$ times and \gglasso{} outperformed \GroupFaRSA{} $7$ times. From the entire $200$ instances, \GroupFaRSA{} outperformed \gglasso{} $153$ times and \gglasso{} outperformed \GroupFaRSA{} $35$ times.

In terms of solution sparsity, we considered \GroupFaRSA{} to have outperformed \gglasso{} if the following two conditions held: (i) all zero groups in the \gglasso{} solution were also zero groups in the \GroupFaRSA{} solution, and (ii) the solution returned by \GroupFaRSA{} had at least one zero group that was not a zero group in the \gglasso{} solution. A similar criteria was used to define when \gglasso{} was considered to have outperformed \GroupFaRSA{}.  From the $109$ test instances, \GroupFaRSA{} outperformed \gglasso{} in $30$ cases and \gglasso{} outperformed \GroupFaRSA{} in $7$ cases. From the entire collection of $200$ problem instances, \GroupFaRSA{} outperformed \gglasso{} in $33$ cases and \gglasso{} outperformed \GroupFaRSA{} in $8$ cases.

%\ifreport
%For additional numerical results, see Appendix~\ref{app:numerics}.
%\fi

%*********
% Section
%*********
\section{Conclusion}\label{sec.conclusion}

We presented a new framework for solving optimization problems that incorporate group sparsity-inducing regularization by using subspace acceleration, domain decomposition, and support identification.  In terms of theory, we proved a complexity result on the maximum number of iterations before an $\epsilon$-approximate solution is computed (Theorem~\ref{thm:complexity}), and a local superlinear convergence rate (Theorem~\ref{th.daniel_magic}).  The strong convergence theory was supported by experimental results for minimizing a group sparsity-regularized logistic function for the task of classification. In terms of robustness, computational time, final objective value obtained, and solution sparsity, the numerical results showed that our proposed \GroupFaRSA{} framework outperformed a state-of-the-art method. 

%*********
% Appendix
%*********
\appendix 

%*********
% Section
%*********
\section{Proofs}\label{app:PG}
In this appendix, for completeness, we provide detailed proofs of the results from Section~\ref{sec.preliminaries} related to the PG computations.

\medskip
\noindent\textbf{Proof of Lemma~\ref{lem:descent-full}.}
Let $x_+ = T(\xbar,\alphabar)$ denote the PG update in~\eqref{def:pg-update} so that $x_+ = \xbar + s(\xbar,\alphabar)$ with $s(\xbar,\alphabar)$ defined in~\eqref{def:pg-step}.   It follows from the optimality conditions for the problem in~\eqref{def:pg-update} that
there exists $g_+\in\partial r(x_+)$ such that 
\begin{align}\label{descent-0}
x_+ - \xbar + \alphabar\nabla f(\xbar) + \alphabar g_+  = 0.     
\end{align}
Next, for an arbitrary $g_{f+r}\in\partial (f+r)(\xbar)$, it follows from Assumption~\ref{ass.first} and~\cite[Proposition~5.4.6]{bertsekas2009convex} that there exits $g_r\in\partial r(\xbar)$ satisfying
$g_{f+r} =  \nabla f(\xbar) + g_r$.
From the definitions of $g_r$ and $g_+$ and convexity of $r$, it follows that
\begin{equation}\label{descent-2}
r(x_+)\geq r(\xbar) + g_r^T(x_+ - \xbar) 
\ \ \text{and} \ \
r(\xbar)\geq r(x_+) + g_+^T(\xbar - x_+). 
\end{equation}
Adding the two equations in \eqref{descent-2} together yields
$(g_r-g_+)^T(x_+ - \xbar)\leq 0$.
Combining this with the definition of $g_{f+r}$, \eqref{descent-0}, and the definition of $x_+$ that 
\bequation\label{eq:s-inner-g}
\begin{aligned}
s(\xbar,\alphabar)^Tg_{f+r}
&= (x_+ - \xbar)^T(\nabla f(\xbar) + g_r) \\
&= \tfrac{1}{\alphabar}(x_+ - \xbar)^T(\xbar - x_+ - \alphabar g_+ + \alphabar g_r) \\
&= -\tfrac{1}{\alphabar}\|x_+ - \xbar\|_2^2 + (x_+ - \xbar)^T(g_r - g_+)
\leq -\tfrac{1}{\alphabar}\|s(\xbar,\alphabar)\|_2^2. 
\end{aligned}
\eequation
Since $g_{f+r} \in \partial (f+r)(\xbar)$ was arbitrary, the result~\cite[Theorem~2.87]{mordukhovich2013easy} and~\eqref{eq:s-inner-g} yield
$$
D_{f+r}(\xbar;s(\xbar,\alphabar)) 
= \sup_{g\in \partial (f+r)(\xbar)} s(\xbar,\alphabar)^Tg
\leq -\tfrac{1}{\alphabar}\|s(\xbar,\alphabar)\|_2^2,
$$
which is the desired result and completes the proof. 

\medskip
\noindent\textbf{Proof of Lemma~\ref{lem:dd}.}
The proof follows exactly as in the proof of Lemma~\ref{lem:descent-full} above, but where all calculations are restricted to groups in the set $\Ical$ (also see~\eqref{def:T}).

\medskip
\noindent\textbf{Proof of Lemma~\ref{lem:decrease-known}.}
The result, for the case $\Ical = \{1,2,\dots, n\}$, can be found in~\cite[Lemma~10.4]{beck2017first}.  For the general case, i.e., when $\Ical$ is equal to the union of a subset of $\{\Gcal_i\}_{i=1}^{n_\Gcal}$, the result follows by  using the same proof as for~\cite[Lemma~11.9]{beck2017first}.

\medskip
\noindent\textbf{Proof of Lemma~\ref{lem:g>s}.}
Denote $g_i := \nabla_{\Gcal_i} f(\xbar)$, $x_i = [\xbar]_{\Gcal_i}$, and $s_i = [s(\xbar,\alphabar)]_{\Gcal_i}$. Since $f+r$ is differentiable with respect to the variables in $\Gcal_i$ at $\xbar$ since $[\xbar]_{\Gcal_i} \neq 0$, we have 
\begin{align*}
\|\nabla_{\Gcal_i} (f + r)(\xbar)\|_2^2
=\|g_i + \lambda_ix_i/\|x_i\|_2 \|_2^2  
= \|g_i\|_2^2 + 2\lambda_i \frac{g_i^T x_i}{\|x_i\|_2} + \lambda_i^2,     
\end{align*}
which means that it is sufficient to prove that
$$
\|g_i\|_2^2 + 2\lambda_i \frac{g_i^T x_i}{\|x_i\|_2} + \lambda_i^2
\geq \|s_i\|_2^2.
$$
Since $x_i + s_i \neq 0$ by assumption, 
we know that $s_i$ (see~\eqref{def:T}) satisfies
\begin{align*}
s_i 
&= \left(1 - \frac{\alphabar \lambda_i}{\|x_i - \alphabar g_i\|_2}\right) (x_i - \alphabar g_i) - x_i \\
&= x_i - \alphabar g_i - \frac{\alphabar\lambda_i(x_i - \alphabar g_i)}{\|x_i - \alphabar g_i\|_2} - x_i 
= -\alphabar \left(g_i + \frac{\alphabar\lambda_i(x_i - \alphabar g_i)}{\|x_i - \alphabar g_i\|_2} \right)
\end{align*}
so that
$$
\|s_i\|_2^2 = \alphabar^2\left(\|g_i\|_2^2  + 2\alphabar\lambda_i \frac{g_i^T (x_i - \alphabar g_i)}{\|x_i - \alphabar g_i \|_2} + \alphabar^2\lambda_i^2\right).
$$
Thus, it is sufficient to prove that
$$
\|g_i\|_2^2 + 2\lambda_i \frac{g_i^T x_i}{\|x_i\|_2} + \lambda_i^2
\geq \alphabar^2\left(\|g_i\|_2^2 + 2\alphabar\lambda_i \frac{g_i^T (x_i - \alphabar g_i)}{\|x_i - \alphabar g_i \|_2}  + \alphabar^2\lambda_i^2\right).
$$
We consider two cases, and note that $x_i \neq 0$ by assumption and that $x_i - \alphabar g_i \neq 0$ as a consequence of~\eqref{def:T} and the assumption that $x_i+s_i \neq 0$.

\smallskip
\noindent\textit{Case 1: $\alphabar = 1$.}
In this case, the desired inequality simplifies to
\begin{equation}\label{key:simple}
\frac{g_i^Tx_i}{\|x_i\|_2} 
\geq \frac{g_i^T (x_i - g_i)}{\|x_i - g_i \|_2}.
\end{equation}
We now consider the following two subcases. 

\smallskip
\noindent\textit{Case 1a: $g_i^T x_i \geq 0$.} The desired inequality clearly holds if  $g_i^T (x_i - g_i) \leq 0$. Thus, for the remainder of this subcase, we assume that $g_i^T (x_i - g_i) > 0$, which equivalently means that $g_i^T x_i > \|g_i\|_2^2$, which implies that $-2x_i^Tg_i + \|g_i\|_2^2 < 0$.  It follows from this inequality and the fact that $(g_i^T x_i)^2 \leq \|g_i\|_2^2\|x_i\|_2^2$ (by Cauchy-Schwarz) that
\begin{align*}
(g_i^T \!x_i)^2(-2x_i^T \! g_i + \|g_i\|_2^2) 
&\geq (-2x_i^T \! g_i + \|g_i\|_2^2)\|g_i\|_2^2\|x_i\|_2^2 
= \left(\|g_i\|_2^4 - 2g_i^T \! x_i\|g_i\|_2^2 \right)\|x_i\|_2^2.
\end{align*}
We can now add the term $(g_i^T  x_i)^2\|x_i\|_2^2$ to both sides to obtain
\begin{align*}
(g_i^T x_i)^2(\|x_i\|_2^2 -2x_i^T g_i + \|g_i\|_2^2) 
&\geq \left((g_i^T x_i)^2 + \|g_i\|_2^4 - 2g_i^T x_i\|g_i\|_2^2 \right)\|x_i\|_2^2,
\end{align*}
which can be written equivalently as
$$
(g_i^T x_i)^2 \|x_i-g_i\|_2^2
\geq (g_i^T x_i - \|g_i\|_2^2)^2 \|x_i\|_2^2
= (g_i^T(x_i-g_i))^2\|x_i\|_2^2.
$$
After taking the square root of both sides, we obtain~\eqref{key:simple}.

\smallskip
\noindent\textit{Case 1b: $g_i^T x_i < 0$.}
Using $g_i^T x_i < 0$ and $(g_i^T x_i)^2 \leq \|g_i\|_2^2\|x_i\|_2^2$ (by Cauchy-Schwarz), we have
\begin{align*}
(g_i^T \! x_i)^2(-2x_i^T \! g_i + \|g_i\|_2^2) 
&\leq (-2x_i^T \! g_i + \|g_i\|_2^2)\|g_i\|_2^2\|x_i\|_2^2 
= \left(\|g_i\|_2^4 - 2g_i^T \! x_i\|g_i\|_2^2 \right)\|x_i\|_2^2.
\end{align*}
We can now add the term $(g_i^T x_i)^2\|x_i\|_2^2$ to both sides to obtain
\begin{align*}
(g_i^T x_i)^2(\|x_i\|_2^2 -2x_i^T g_i + \|g_i\|_2^2) 
&\leq \left((g_i^T x_i)^2 + \|g_i\|_2^4 - 2g_i^T x_i\|g_i\|_2^2 \right)\|x_i\|_2^2,
\end{align*}
which can be written equivalently as
$$
(g_i^T x_i)^2 \|x_i-g_i\|_2^2
\leq (g_i^T x_i - \|g_i\|_2^2)^2 \|x_i\|_2^2
= (g_i^T(x_i-g_i))^2\|x_i\|_2^2.
$$
After taking the square root of both sides and rearranging, we obtain
$$
\frac{|g_i^Tx_i|}{\|x_i\|_2} 
\leq 
\frac{|g_i^T (x_i - g_i)|}{\|x_i - g_i \|_2}.
$$
Combining this result with $0 > g_i^T x_i \geq g_i^T (x_i-g_i)$ gives~\eqref{key:simple}, as claimed. 

\smallskip
\noindent\textit{Case 2: $\alphabar \in(0,1)$.}
The proof of follows from Case 1 and~\cite[Theorem 10.9]{beck2017first}, which in our notation from~\eqref{def:pg-step} proves that $\|s(\xbar,\alphabar)\|_2 \leq \|s(\xbar,1)\|_2$ when $\alphabar\in (0,1)$.

%*********
% Section
%*********

% Extended Numerical results: Technical report only
%\ifreport
%  \input{appendix2_numerics}
%\fi  

%**************
% Bibliography
%**************
\bibliographystyle{plain}
\bibliography{references}

\end{document}